\journal{}
\theoremstyle{plain}
\newtheorem{thm}{\bf Theorem}[section]
\newtheorem{pro}[thm]{\bf Proposition}
\newtheorem{claim}[thm]{\bf Claim}
\theoremstyle{definition}
\newtheorem{rem}[thm]{\bf Remark}
\newtheorem{defn}[thm]{\bf Definition}
\newtheorem{exa}[thm]{\bf Example}
\newcommand{\red}[1]{\textcolor{black}{#1}}
\newcommand{\blue}[1]{\textcolor{black}{#1}}
\newcommand{\mage}[1]{\textcolor{black}{#1}}
\newcommand{\cyan}[1]{\textcolor{black}{#1}}
\newcommand\rk{{\rm rank\,}} 
\newcommand\im{{\rm Im\,}} 
\newcommand\rD{{\rm D}}
\algnewcommand\Assum{\item[\textbf{Assumption 2:}]}
\algnewcommand\Stepz{\item[\textbf{Step $0$:}]}
\algnewcommand\Stepk{\item[\textbf{Step $k$:}]}
\algnewcommand\Rep{\item[\textbf{Repeat:}]}
\algnewcommand\Result{\item[\textbf{Result:}]}
\begin{document}
\begin{frontmatter}

\title{Geometric analysis  of nonlinear differential-algebraic equations via nonlinear control theory}

\author[First]{Yahao Chen} 
\author[Second]{Witold Respondek} 

\address[First]{Bernoulli Institute, University of Groningen, The Netherlands (e-mail: yahao.chen@rug.nl); }
\address[Second]{Normandie Universit\'{e}, INSA-Rouen, LMI, France (e-mail: witold.respondek@insa-rouen.fr).}

\begin{abstract}
For nonlinear differential-algebraic equations (DAEs), we define two kinds of equivalences, namely, the external and internal equivalence. Roughly speaking, the word ``external'' means  that we consider a DAE (locally) everywhere and ``internal'' means that  we consider the DAE on its (locally) maximal invariant submanifold (i.e., where its solutions exist) only. First, we revise the geometric reduction method in DAEs solution theory   and formulate an implementable algorithm to realize that method. Then a procedure called explicitation with driving variables is proposed to connect nonlinear DAEs with nonlinear control systems and we show that the driving variables of an explicitation system can be reduced under some involutivity conditions. Finally, due to the explicitation, we  use some notions from nonlinear control theory to derive two  nonlinear generalizations of the Weierstrass form.
\end{abstract}
\begin{keyword}
geometric methods, nonlinear DAEs, control systems, explicitation, external equivalence, internal equivalence, zero dynamics, invariant submanifolds,  Weierstrass form
\MSC[2020]  34A09, 93B17, 93B27, 93B10, 93C10
\end{keyword}
\end{frontmatter}
\section{Introduction}\label{Chap3sec1}
Consider a nonlinear differential-algebraic equation (DAE) of {the} form
\begin{align}\label{Eq:DAE0}
\Xi:	E(x)\dot x= F(x),
\end{align}
where $x\in X$ is a vector of the generalized states {and $X$ is} an open subset of $\mathbb R^n$ (or an $n$-dimensional manifold).
\vspace{-0.5cm}
\begin{center}
\begin{tikzpicture}
\matrix (m) [matrix of math nodes,row sep=0.5em,column sep=3em,minimum width=1em]
{TX &   \\
	&  \mathbb R^{l} \\
X&\\};
\path[-stealth]
(m-1-1) edge node [left, xshift=-0.1cm] {$\pi$} (m-3-1)
 edge node [above, yshift=0.1cm ] {$E$} (m-2-2)
(m-3-1) edge   node [below,yshift=-0.1cm ] {$F$} (m-2-2);
\end{tikzpicture}
\end{center}
\vspace{-0.5cm}
 The maps $E:TX\rightarrow\mathbb{R}^{l}$ and $F: X\rightarrow{\mathbb{R}^l}$  (see the above diagram, where  $ \pi :TX\rightarrow X$ is the canonical projection)  are smooth and the word ``smooth'' will mean throughout this paper $\mathcal{C}^{\infty}$-smooth . We will denote a DAE of \red{the} form (\ref{Eq:DAE0}) by $\Xi_{l,n}=(E,F)$ or, simply, $\Xi$. 
Equation (\ref{Eq:DAE0}) is affine with respect to the velocity $\dot x$, so sometimes it is called a quasi-linear DAE (see e.g., \cite{rabier2002theoretical,riaza2008differential}) and can be considered as an affine Pfaffian system {since the rows $E^i$ of $E$ are actually differential $1$-forms on $X$} (for linear Pfaffian systems, see e.g. \cite{kobayashi1963foundations}), \red{so $E$ is a $\mathbb R^l$-valued differential 1-form on $X$}.
 A  semi-explicit   DAE is of the form
\begin{align}\label{Eq:DAE1}
\Xi^{SE}:	\left\lbrace {\begin{array}{c@{}l}
	\dot x_1&=F_1(x_1,x_2)\\
	0&=	F_2(x_1,x_2),
	\end{array}} \right.
\end{align}
where $x_1\in X_1$   is a vector of state variables and $x_2\in X_2$ is a vector of algebraic \red{or free} variables (since there are no differential equations for $x_2$) with $X_1$ and $X_2$ being open subsets of $\mathbb R^q$ and $\mathbb R^{n-q}$, respectively (or  $q$- and $(n-q)$-dimensional manifolds, respectively), the maps $F_1:X_1\times X_2\rightarrow TX_1$ and $F_2:X_1\times X_2\rightarrow \mathbb R^{l-q}$ are smooth.   A linear DAE of the form
\begin{align}\label{Eq:DAE2}
\Delta: E\dot x=Hx
\end{align}
will be denoted by $\Delta_{l,n}=(E,H)$ or, simply, $\Delta$, where $E\in \mathbb{R}^{l\times n}$ and $H\in \mathbb{R}^{l\times n}$. Both the semi-explicit DAE $\Xi^{SE}$ and the linear DAE $\Delta$ can be seen as special {cases} of  DAE $\Xi$. 
The motivation of studying DAEs is their frequent {presence} in {modelling} of  practical systems as electrical circuits \cite{riaza2008differential,riaza2013daes}, chemical processes \cite{byrne1988differential,pantelides1988mathematical}, mechanical systems \cite{rabier2000nonholonomic,betsch2002dae,blajer2007control}, etc.

There are  three main results of this paper. {The first result concerns analyzing a DAE} (locally) everywhere (i.e., externally) {or} considering the restriction of the DAE {to a submanifold} (i.e., internally), which corresponds {to} the external equivalence (see Definition \ref{Def:ex-equivalence}) and the internal equivalence (see Definition \ref{Def:in-equivalence}), respectively. The difference {between} the two {equivalences} will be illustrated by their relations with the solutions. In order to analyze the existence of solutions, we use a concept called  \emph{locally maximal invariant submanifold} (see Definition \ref{Def:invariant manifold}),  which is a submanifold  where the solutions of a DAE  exist and can be constructed via a geometric reduction method shown in Section~\ref{section:2}. Note that the geometric reduction method is not new in the  theory of nonlinear DAEs, see e.g.,   \cite{reich1990geometrical,reich1991existence,rabier1994geometric,rabier2002theoretical,riaza2008differential} and the recent papers \cite{chenMTNS,berger2016controlled,Berger2016zero}. In the present paper, we will show a practical implementation of this method via an algorithm \red{summarized in} Appendix.  Note that considering only the restriction of a DAE means {that} we only care about {where and} how the solutions of {that} DAE evolve. However, when \red{a} nominal point is not on the maximal invariant submanifold (which is common for practical systems, since an initial point could be anywhere), there are no solutions passing through the point but we still want to steer the solutions to the submanifold and \red{thus we} must follow the rules indicated by the ``external'' form of the DAE, thus considering  {DAEs} everywhere is also important, see our recent publication \cite{chen2021ADHS}, where we use external equivalence to study jump solutions of nonlinear DAEs.

The second result of this paper is a nonlinear counterpart of the results of \cite{chen2019a}, in which  we have shown that one can associate a class {of} linear control systems  to any linear DAE (by the procedure of explicitation for linear DAEs). In the present paper, to {any nonlinear} DAE, by introducing extra variables (called driving variables), we can attach a class of nonlinear control systems. Moreover, we show that the driving variables in this explicitation procedure can be \red{fully} reduced under some involutivity conditions {which explains} when a DAE $\Xi$ is ex-equivalent to a semi-explicit DAE $\Xi^{SE}$.

It is well-known (see e.g., \cite{Kron90},\cite{Gant59b}) {that} any linear DAE $\Delta$ of the form (\ref{Eq:DAE2}) is ex-equivalent (via linear transformations) to the Kronecker canonical form \textbf{KCF}. In particular, if $\Delta$ is \emph{regular}, i.e., the matrices $E$ and $H$ are square ($l=n$) and $\left| {sE - H} \right|\not\equiv0$, $\forall s\in \mathbb{C} $, then $\Delta$ is ex-equivalent (also via linear transformations) to the Weierstrass form \textbf{WF} \cite{Weie68}  (see (\ref{Eq:WF}) below). The studies on normal forms and canonical forms of DAEs can be found in \cite{Weie68,Kron90,loiseau1991feedback,BERGER20124052,Berger2012} for the linear  case and in \cite{rouchon1992kronecker,KumaDaou99,Berger2016zero} for the nonlinear case.  The last result of this paper is to use {such} concepts as zero dynamics, \cyan{relative degree and invariant distributions} of the nonlinear control theory \cite{Isid95,nijmeijer1990nonlinear} to derive   nonlinear generalizations of the \textbf{WF}. In the linear case, canonical forms as the \textbf{KCF} and the \textbf{WF} are closely related to a geometric concept named the Wong sequences \cite{Wong74} (see Remark \ref{Rem:wong} below). \cyan{In} \cite{BERGER20124052}, relations between the \textbf{WF} and the Wong sequences  have been built and in \cite{Berger2012}, the importance of the Wong sequences for the geometric analysis of linear DAEs is reconfirmed. \cyan{In the present paper, we propose generalizations of the Wong sequences for nonlinear DAEs and show their importance in analyzing structure properties.} 

 This paper is organized as follows. In Section \ref{section:2}, we discuss \red{the} existence of solutions of DAEs by revising the geometric reduction method. In Section \ref{subsection:3.1}, we compare the notions of external equivalence and internal equivalence and discuss the uniqueness of DAEs solutions via the notion of internal regularity. In Section \ref{subsection:3.2}, we propose the explicitation (with driving variables) procedure to connect nonlinear DAEs to nonlinear control systems.  In Section \ref{subsection:3.3}, we show when a nonlinear DAE is externally equivalent to {a} semi-explicit one and how this problem {is} related to the explicitation procedure.   \cyan{Two nonlinear generalizations of the Weierstrass form are given} in Section \ref{subsection:3.4}. Finally, Section~\ref{sec:Proofs} and Section~\ref{section:4} contain  proofs and the conclusions, respectively. In Appendix of Section \ref{sec:Appendix}, we show a  recursive algorithm which implements  the geometric reduction method.
 
  The following notations will be used throughout the paper.  We use 
${\mathbb{R}^{n \times m}}$ to denote the set of real valued matrices with $n$ rows and $m$ columns, $GL\left( {n,\mathbb{R}} \right)$ to denote  the group of nonsingular matrices of $\mathbb{R}^{n \times n}$ and   $I_n$ to denote the $n\times n$-identity  matrix.  For a linear map $L$, we denote by $\rk L$, $\ker L$ and $\im L$, the rank, the kernel and the image of $L$, respectively. Denote by
$T_xM$  the tangent space of a submanifold $M$ of $\mathbb{R}^n$ at $x\in M$ and \red{by} $\mathcal C^k$  the class of functions which are $k$-times differentiable.  For a smooth map $f:X\to \mathbb R$, we  denote its differential by ${\rm d} f=\sum^n_{i=1}\frac{\partial f}{\partial x_i}{\rm d}x_i=[\frac{\partial f}{\partial x_1},\ldots,\frac{\partial f}{\partial x_n}]$  and for a vector-valued map $f:X\to\mathbb R^m$, where $f=[f_1,\ldots, f_m]^T$, we denote its differential by ${\rm D} f =\left[ \begin{smallmatrix}
{\rm d} f_1\\
\vdots\\
{\rm d} f_m
\end{smallmatrix}\right]$.  For two column vectors  $v_1\in \mathbb R^m$ and $ v_2\in \mathbb R^n$, we  write $(v_1,v_2)=[v^T_1,v^T_2]^T\in \mathbb R^{m+n}$. 
\section{The geometric reduction method revisited}\label{section:2}
In this section, we revise   the geometric reduction method in \red{the} DAEs solution theory, \red{other} formulations of this method \red{can} be consulted in   Section 3.4 of \cite{riaza2008differential}, Chapter IV of \cite{rabier2002theoretical} and  \cite{chenMTNS} for DAEs and \cite{berger2016controlled} for DAE control systems.  We start from the definition of a solution for a DAE.
\begin{defn}
	A solution of a DAE $\Xi_{l,n}=(E,F)$ is a $\mathcal C^1$-curve $x:I\rightarrow {X}$ defined on an open interval $I$ such that for all $t\in I$, the curve $x(\cdot)$ satisfies $E\left( {x(t)} \right)\dot x(t) = F\left( {x(t)} \right)$.
\end{defn}
Throughout this paper, {we will be interested only in solutions of $\Xi$ that are at least $\mathcal C^1$.}    A given point $x_0$ is called \emph{consistent} (\red{or \emph{admissible}}) if there exists at least one solution $x(\cdot)$ of $\Xi$ satisfying $x(t_0)=x_0$ (i.e., $E(x_0)\dot x(t_0)=F(x_0)$) for a certain $t_0\in I$, we will denote  by $S_c$ \emph{the consistency set}, i.e., the set of all consistent points. 
\begin{defn}[invariant and locally invariant submanifolds]\label{Def:invariant manifold}
	Consider a DAE $\Xi_{l,n}=(E,F)$  defined on $X$. A smooth connected embedded submanifold  $M$ of $X$ is called \emph{invariant} if  for any point $x_0\in M$, there exists a solution  $x:I\rightarrow X$ of $\Xi$ such that $x(t_0)=x_0$ \mage{for a certain $t_0\in I$} and  $x(t)\in M$ for all $t\in I$. Given a  point $x_p\in X$, we will say that a submanifold $M$ \cyan{containing $x_p$} is  \emph{locally invariant} (around $x_p$) if  there exists an open neighborhood $U\subseteq X$ of $x_p$ such that $M\cap U$ is invariant. 
\end{defn} 
\begin{pro}\label{Pro:invariant submanifold}
	Consider a DAE $ \Xi_{l,n}=(E,F)$ and fix a point $x_p$. Let $M$ be a smooth connected embedded submanifold containing $x_p$. If $M$ is a locally invariant submanifold around $x_p$, then $F(x)\in E(x)T_xM$  for all $x\in M$ around $x_p$.  Conversely, \red{assume that there exists an open neighborhood $U$ of $x_p$ such that, at all $x\in M\cap U$, we have $F(x)\in E(x)T_xM$ and, additionally, $\dim E(x)T_xM=const.$,}  then $M$ is a locally invariant submanifold.
\end{pro}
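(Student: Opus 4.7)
The plan is to handle the two implications separately. The forward direction follows almost immediately from the definition: if $M$ is locally invariant around $x_p$, then for any $x_0\in M\cap U$, with $U$ as in Definition~\ref{Def:invariant manifold}, there is a solution $x(\cdot)$ of $\Xi$ with $x(t_0)=x_0$ that stays in $M$. Since $x(t)\in M$ for all $t\in I$, the velocity satisfies $\dot x(t_0)\in T_{x_0}M$, and the DAE then yields $F(x_0)=E(x_0)\dot x(t_0)\in E(x_0)T_{x_0}M$.

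For the converse, my idea is to construct, after possibly shrinking $U$, a smooth vector field $f$ on $M\cap U$ such that $f(x)\in T_xM$ and $E(x)f(x)=F(x)$ for all $x\in M\cap U$. Once such an $f$ is available, standard Cauchy--Lipschitz theory on the manifold $M\cap U$ produces, through any $x_0\in M\cap U$, a $\mathcal C^1$ integral curve $x(\cdot):I\to M\cap U$ of $f$. Because $\dot x(t)=f(x(t))\in T_{x(t)}M$ and $E(x(t))\dot x(t)=E(x(t))f(x(t))=F(x(t))$, this curve is simultaneously a solution of $\Xi$ and remains inside $M$, which is precisely the local invariance of $M$.

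The main obstacle is therefore the smooth selection of $f$, and this is exactly where the constant-rank hypothesis $\dim E(x)T_xM\equiv r$ enters. My plan is to work in local coordinates on $M$ around $x_p$: if $q=\dim M$, then the restriction of $E(x)$ to $T_xM$ is represented by a smooth $l\times q$ matrix $\tilde E(x)$ whose image has constant dimension $r$. Choose $r$ columns of $\tilde E(x_p)$ forming a basis of $E(x_p)T_{x_p}M$; by continuity these columns remain linearly independent on a neighborhood, giving smooth frames $a_1(x),\ldots,a_r(x)$ of the subbundle $E(x)T_xM$ together with smooth preimages $v_1(x),\ldots,v_r(x)\in T_xM$ satisfying $E(x)v_i(x)=a_i(x)$. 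Since $F(x)\in E(x)T_xM$ and the $a_i$'s form a smooth basis of that subspace, the expansion $F(x)=\sum_{i=1}^r c_i(x)a_i(x)$ has smooth coefficients $c_i$ (one can recover them, e.g., by completing $a_1(x),\ldots,a_r(x)$ to a smooth basis of $\mathbb{R}^l$ and applying Cramer's rule). Setting $f(x):=\sum_{i=1}^r c_i(x)v_i(x)$ gives the desired tangent vector field.

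The conceptual content of the proof is that a vector bundle morphism of constant rank has a locally trivial image bundle and admits smooth local right-inverses over its image. Without the constant-rank condition, the pointwise solvability $F(x)\in E(x)T_xM$ need not yield a $\mathcal C^1$ selection $f$, and consequently no solution curve through $x_0$ inside $M$ is guaranteed; this is the subtle point I expect to justify most carefully in the full write-up.
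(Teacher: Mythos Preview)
Your proposal is correct. The forward direction is identical to the paper's. For the converse, both you and the paper implement the same core idea---use the constant-rank hypothesis to produce a smooth tangent vector field on $M$ solving $E(x)f(x)=F(x)$, then integrate it---but the executions differ. You argue abstractly: pick $r$ independent columns of the matrix $\tilde E$ representing $E|_{TM}$, obtain a smooth frame $a_i=Ev_i$ of the constant-rank image bundle together with fixed preimages $v_i$, expand $F=\sum c_i a_i$ with smooth coefficients, and set $f=\sum c_i v_i$. The paper instead straightens $M$ via coordinates $(z_1,z_2)$ with $M=\{z_2=0\}$, row-reduces $E|_{TM}$ through a smooth $Q$ to $\left[\begin{smallmatrix}\bar E_1\\0\end{smallmatrix}\right]$ with $\bar E_1$ of full row rank $\bar r$, uses $F\in E\,T_xM$ to conclude $\bar F_2\equiv 0$, and then solves the remaining underdetermined equation $\bar E_1\dot z_1=\bar F_1$ by permuting $z_1$ so that a square block of $\bar E_1$ is invertible and treating the leftover components of $z_1$ as free (driving) variables. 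Your route is shorter and conceptually cleaner; the paper's is more explicitly computational and has the side benefit of constructing, along the way, the restricted and reduced DAEs $\Xi|_M$ and $\Xi|_M^{red}$ (Definitions~\ref{Def:restriction} and~\ref{Def:rednonDAE}) that are reused later, notably in Proposition~\ref{Pro:restriction} and in the definition of internal equivalence.
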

The proof is given in Section \ref{sec:prf_prop}.
\begin{rem}
 Note that the  assumption  that $\dim E(x)T_xM=const.$ of Proposition \ref{Pro:invariant submanifold} is not a necessary condition to conclude that $M$ is an invariant submanifold,  but it   excludes singular points of DAEs and helps to view a DAE as an ordinary differential equation (ODE) defined on the invariant submanifold. 
Take the following DAE  for an example:$$ \Xi_{1,1}:x\dot x=x^2,$$
	where $x\in X=\mathbb R$. \red{Let $M=X$, then clearly, $F(x)=x^2\in x\cdot T_xX$, at any $x\in M=\mathbb R$. }We have  $\dim\, E(x)T_{x}M $ equals $1$ for $x\neq0$ and is $0$ for $x=0$, so $\dim\, E(x)T_{x}M\neq const.$, for all $x\in M$. \red{Nevertheless, for any $x_0\in M=\mathbb R$, there exists a unique solution $x(t)$ satisfying $x(0)=x_0$, namely, $x(t)=e^tx_0$. Therefore $M=\mathbb R$ is an invariant submanifold.}
\end{rem}
A locally invariant submanifold $M^*$ (around $x_p$) is called \mage{locally} \emph{maximal}, if there exists a neighborhood $U$ of $x_p$ such that for any other locally invariant submanifold $M$, we have $M\cap U \subseteq M^*\cap U$. The   \emph{geometric reduction method} for DAEs is the following recursive procedure which   can be used to construct   locally maximal invariant submanifold $M^*$.
\begin{defn}[geometric reduction method]\label{Def:grm}
Consider a DAE $\Xi_{l,n}=(E,F)$,  fix a point $x_p\in X$ and let $U_0$ be an open connected subset of $X$ containing $x_p$.      Set $M_0=X$, $M^c_0=U_0$.   Suppose that  there exist  an open neighborhood $U_{k-1}$ of $x_p$ and a sequence of smooth connected embedded submanifolds $M^c_{k-1}\subsetneq\dots \subsetneq M^c_0$ of $U_{k-1}$ for a certain $k\ge 1$, has been constructed. Define recursively
\begin{align}\label{Eq:Mseq}
{M_{k}} := \left\{ {x \in M^c_{k-1}:F(x) \in E(x){T_x M^c_{k-1}}}\right\}.
\end{align} 
{Then either} $x_p\notin M_k$ or $x_p\in M_k$, and in the latter case, assume that there exists a neighborhood $U_k$ of $x_p$ such that $M^c_k=M_k\cap U_k$ is a smooth  embedded submanifold (which can always be assumed connected by taking $U_k$ sufficiently small).
\end{defn}
\begin{rem}\label{Rem:wong}
	For a linear DAE $\Delta=(E,H)$ of the form (\ref{Eq:DAE2}), define a sequence of subspaces (one of the Wong sequences \cite{Wong74}) by	
	$$ 
	\mathscr{V}_0 =\mathbb{R}^n, \ \ \mathscr{V}_{k}=H^{-1}E\mathscr{V}_{k-1}, \ \ k\ge 1.
	$$
	If we apply the iterative construction of $M_k$ by (\ref{Eq:Mseq}) to the  DAE $\Delta$, we get $M^c_k=\mathscr V_{k}$, $\forall k\ge 0$. Thus the sequence of submanifolds $M_k$ can be seen as a nonlinear generalization of the   sequence $\mathscr V_k$.
\end{rem}
 The following proposition shows that the geometric reduction method above can be used to construct locally maximal invariant submanifold $M^*$ and \red{to deduce} that the consistency set $S_c$, on which the solutions  exist, coincides locally with $M^*$.
\begin{pro} \label{Pro:invariant manifold}
In the geometric reduction method of Definition \ref{Def:grm}, there always exists $k^*\le n$ such that either $k^*$ is the smallest integer for which $x_p\notin M_{k^*+1}$  or $k^*$ is the smallest integer  such that $x_p\in M^c_{k^*+1}$ and $M^c_{k^*+1}\cap U_{k^*+1}=M^c_{k^*}\cap U_{k^*+1}$. In the latter case, we   assume that $\dim\, E(x)T_xM^c_{k^*+1}=const.$ in a neighborhood $U^*\subseteq U_{k^*+1}$ of $x_p$ {in $X$ and then}
\begin{itemize}
\item [(i)]  $x_p$ is  consistent   and $M^*=M^c_{k^*+1}$ is a locally maximal invariant submanifold around $x_p$.
\item [(ii)] $M^*$ coincides locally with the consistency set $S_c$, i.e., $M^*\cap U=S_c\cap U^*$ (take a smaller $U^*$ if necessary).
\end{itemize}
\end{pro}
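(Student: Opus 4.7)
The plan is to handle termination first, then prove (i) via Proposition \ref{Pro:invariant submanifold} together with an inductive maximality argument, and finally prove (ii) by an inductive argument on $\mathcal C^1$-solutions.

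Termination is a dimension count. By Definition \ref{Def:grm}, as long as the recursion continues the submanifolds $M^c_k$ are strictly nested, so $\dim M^c_k$ strictly decreases from $\dim M^c_0=n$. Hence within at most $n$ steps we must either reach a stage with $x_p\notin M_{k+1}$, or the strict drop fails locally, i.e.\ $M^c_{k^*+1}\cap U_{k^*+1}=M^c_{k^*}\cap U_{k^*+1}$. This forces $k^*\le n$.

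For (i), stabilization means that $M^*=M^c_{k^*+1}$ is locally equal to $M^c_{k^*}$, so the defining formula (\ref{Eq:Mseq}) of $M_{k^*+1}$ yields $F(x)\in E(x)T_xM^*$ for every $x\in M^*$ near $x_p$. Combined with the constant-rank hypothesis on $\dim E(x)T_xM^*$, the converse part of Proposition \ref{Pro:invariant submanifold} makes $M^*$ locally invariant, and in particular $x_p$ consistent. For maximality, given any locally invariant submanifold $M$ through $x_p$, the direct part of Proposition \ref{Pro:invariant submanifold} supplies $F(x)\in E(x)T_xM$. I would then induct on $k$: if $M\subseteq M^c_k$ locally, the inclusion $T_xM\subseteq T_xM^c_k$ propagates to $F(x)\in E(x)T_xM^c_k$, so $M\subseteq M_{k+1}$ near $x_p$, and the connectedness of $M$ combined with the openness of $U_{k+1}$ upgrades this to $M\subseteq M^c_{k+1}$ locally. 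After $k^*+1$ iterations, $M\cap U\subseteq M^*$.

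For (ii), the inclusion $M^*\cap U^*\subseteq S_c$ follows immediately from (i). For the reverse inclusion, any $x_0\in S_c$ near $x_p$ admits a solution $x(\cdot)$ with $x(t_0)=x_0$; an induction on $k$ shows $x(t)\in M^c_k$ for $t$ near $t_0$, since the inductive hypothesis gives $\dot x(t_0)\in T_{x_0}M^c_k$ and hence $F(x_0)=E(x_0)\dot x(t_0)\in E(x_0)T_{x_0}M^c_k$, which places $x_0$ in $M_{k+1}$ and, by continuity of $x(\cdot)$, in $M^c_{k+1}$ locally. After $k^*+1$ steps, $x_0\in M^*$. The main obstacle I foresee is the careful bookkeeping of nested open neighborhoods: at every inductive step the local equalities, the tangent-space inclusions and the continuity arguments for the solution are valid only on progressively smaller neighborhoods, so $U^*$ must be shrunk consistently throughout both the maximality argument and the $S_c$-identification, and one must verify that finitely many shrinkings suffice.
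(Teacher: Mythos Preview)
Your approach is correct and close to the paper's, with one organizational difference and one small slip to fix.

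\textbf{Comparison.} The paper proves a single ``Claim'' --- every consistent point in $U_{k^*}$ lies in $M_{k^*+1}$ --- by the same solution-trapping induction you use in (ii), and then invokes that Claim twice: once for maximality in (i) (any point of an invariant $M'$ is consistent, hence in $M^*$) and once for the reverse inclusion $S_c\subseteq M^*$ in (ii). You instead prove maximality directly via the submanifold inclusion $T_xM\subseteq T_xM^c_k$ and the forward part of Proposition~\ref{Pro:invariant submanifold}, reserving the solution induction for (ii) only. Your route for (i) is arguably more natural (it never leaves the submanifold category), while the paper's route is more economical (one induction does double duty). Both are valid.

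\textbf{The slip in (ii).} In your induction step you write that the hypothesis gives $\dot x(t_0)\in T_{x_0}M^c_k$, hence $x_0\in M_{k+1}$, and then ``by continuity of $x(\cdot)$, in $M^c_{k+1}$ locally.'' That continuity argument does not work: $M^c_{k+1}$ is generically a proper submanifold of $M^c_k$, so knowing $x(t_0)\in M_{k+1}$ and $x$ continuous does not force $x(t)\in M^c_{k+1}$ for nearby $t$. The correct move is to apply your tangency argument at \emph{every} $t$ near $t_0$, not just at $t_0$: since $x(t)\in M^c_k$ for $t$ near $t_0$, you get $\dot x(t)\in T_{x(t)}M^c_k$ and hence $F(x(t))\in E(x(t))T_{x(t)}M^c_k$ for all such $t$, so $x(t)\in M_{k+1}$ throughout. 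This is exactly what the paper does in its Claim. With that repair, your neighborhood bookkeeping worry is handled as you anticipated: finitely many shrinkings suffice because $k^*\le n$.
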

The proof is given in Section \ref{sec:prf_prop}.  Note that the geometric method can be implemented in \red{practice} via an algorithm which we propose in Appendix of the present paper \red{and} the results of Proposition~\ref{Pro:ismsol} and Theorem \ref{Thm:NWF} below will be based on that algorithm. 
 \section{External equivalence, internal equivalence and  internal regularity}\label{subsection:3.1}
Two linear DAEs $E\dot x=Hx$ and $\tilde E \dot {\tilde x}=\tilde H \tilde x$ are called  {externally equivalent \cite{chen2019a} or strictly equivalent \cite{Gant59b}}, if there exist constant invertible matrices $Q$ and $P$ such that $QEP^{-1}=\tilde E$ and $QHP^{-1}=\tilde H$. Analogously, we define the external equivalence of two nonlinear DAEs as follows. 
\begin{defn}[external equivalence]\label{Def:ex-equivalence}
	Two DAEs $\Xi_{l,n}=(E,F)$ and $\tilde \Xi_{l,n}=(\tilde E,\tilde F)$ defined on $X$ and $\tilde X$, respectively, are called externally equivalent, shortly ex-equivalent, if there exist a diffeomorphism $\psi: X\rightarrow \tilde X$ and    {$Q: X\rightarrow GL(l,\mathbb{R})$} such that
	$$
	\psi^*\tilde E=QE  \text{\ \ \ and \ \ \ } \psi^*\tilde F=QF,
	$$
\cyan{where $\psi^*\tilde E$ and $\psi^*\tilde F$ denote the pull-back \cite{kobayashi1963foundations} of the $\mathbb R^l$-valued differential $1$-form $\tilde E$ on $\tilde X$ and $\mathbb R^l$-valued function  $\tilde F$ ($0$-form) on $\tilde X$, respectively}, that is,
	\begin{align}\label{Eq:ex-eqnldae}
	\tilde E(\psi (x))=Q(x)E(x)\left( \frac{\partial \psi (x)}{\partial x}\right) ^{-1} \ \ {\rm and} \ \
 	\tilde F(\psi (x))=Q(x)F(x) .
	\end{align}
	The ex-equivalence of two DAEs {will be} denoted by $\Xi\mathop  \sim \limits^{ex}\tilde \Xi$. If  $\psi: U\rightarrow \tilde U$ is a local diffeomorphism between neighborhoods $U$ of $x_p$ and $\tilde U$ of $\tilde x_p$, and $Q(x)$ is defined on $U$,  we will speak about local ex-equivalence.
\end{defn}
The following observation \red{relates ex-equivalence with solutions.}
\begin{rem}\label{Obs:1}
The ex-equivalence preserves trajectories, i.e., for two DAEs $\Xi\mathop  \sim \limits^{ex}\tilde \Xi$, if a  $\mathcal{C}^{1}$-curve  $x(\cdot)$  is a solution of $\Xi$ passing through $x_0=x(t_0)$, then $\tilde x=\psi  \circ x $ is a solution of $ \tilde \Xi $ passing through $\tilde x_0=\psi(x_0)$; but even if  we can smoothly conjugate all trajectories of  two DAEs, they are not necessarily ex-equivalent. For example, consider
$\Xi_1=(E_1,F_1)$ and $ \Xi_2=(E_2,F_2)$, where 
$
E_1(x) = \left[ {\begin{smallmatrix}
	0&0&1\\
	0&0&0\\
	0&0&0
	\end{smallmatrix}} \right]$,   $F_1(x) = \left[  {\begin{smallmatrix}
	x_3^2\\
	x_1\\
	x_2
	\end{smallmatrix}} \right]$, $ E_2(x)=\left[ {\begin{smallmatrix}
0&x_1&1\\
0&0&0\\
0&0&0
	\end{smallmatrix}} \right]$,   $F_2(x) = \left[  {\begin{smallmatrix}
	x_3^2\\
	x_1\\
	x_2
	\end{smallmatrix}} \right] $. \red{Then for both DAEs $\Xi_1$ and $\Xi_2$, the maximal invariant submanifold is $M^*=\left\lbrace (x_1,x_2,x_3)\in \mathbb R^3\,|\, x_2=x_3=0\right\rbrace $ and for any $(x_{10},x_{20},x_{30})=(0,0,x_{30})\in M^*$, the unique solution of both systems is $x_1(t)=x_2(t)=0$, $x_3(t)=\frac{x_{30}}{1-x_{30}t}$. Nevertheless, the DAEs are not ex-equivalent since the distribution $\ker E_1$ is involutive but the distribution $\ker E_2$ is not (clearly, the ex-equivalence of two DAEs preserves the involutivity of $\ker E_1$ and $\ker E_2$ since if $\Xi_1\overset{ex}{\sim}\Xi_2$, via $Q$ and $\psi$, then $\ker E_2=\frac{\partial \psi}{\partial x}\ker E_1$).} 
\end{rem}
Now we use the  algorithm \red{presented} in the Appendix to implement the geometric reduction method \cyan{being a practical application of Proposition \ref{Pro:invariant manifold}} and to show that any  DAE $\Xi$  has isomorphic solutions with an ``internal'' DAE $\Xi^*$ defined on its locally maximal invariant submanifold $M^*$. In the statement of Proposition \ref{Pro:ismsol}, we refer to the submanifold $M^*=M^*_{k+1}$, the neighborhood $U^*=U^*_{k+1}$, \cyan{the coordinates $(z^*,\bar z_1,\ldots,\bar z_{k^*})$ on $U^*$,} and the   DAE $\Xi^*_{r^*, n^*}=(E^*,F^*)$ defined on $M^*$ by the algorithm of the Appendix, where  $E^*= E_{k^*+1}:M^*\to \mathbb R^{r^*\times n^*}$, $F^*= F_{k^*+1}:M^*\to \mathbb R^{r^*}$, $n^*=n_{k^*}=n_{k^*+1}$, $r^*=r_{k^*+1}$ come from Step $k^*+1$  of the algorithm. 
\begin{pro}[isomorphic solutions] \label{Pro:ismsol}
	Consider a DAE $\Xi_{l,n}=(E,F)$, fix a point $x_p\in X$.   Suppose that     {\rm  \textbf{Assumptions 1} and \textbf{2}}  of the algorithm in Appendix are satisfied. 	 
	Then  $M^c_k$, for $k=0,\ldots,k^*+1$, given by (\ref{Eq:Mseq}) of the geometric reduction method are smooth connected embedded submanifolds and  $\dim E(x)T_xM^*=const.$ for all $x\in M^*\cap U^*$. Thus by Proposition \ref{Pro:invariant manifold},   $x_p\in M^*$ is a consistent point and $M^*$ is a locally maximal   invariant submanifold  around $x_p$, given by
	$M^*= \left\lbrace x \,|\, \bar z_1(x)=0,\ldots,\bar z_{k^*}(x)=0 \right\rbrace$.
Then  for the DAE $\Xi^*_{r^*,n^*}=(E^*,F^*)$, \red{defined by the algorithm}, given \red{on $M^*$ by}
\begin{align}\label{Eq:Xi*}
\Xi^*:E^*(z^*)\dot z^*=F^*(z^*),
\end{align} 
 where $z^*=z_{k^*+1}=z_{k^*}$ are local coordinates on $M^*$, we have  ${\rm rank\,}E^*(z^*)=r^*$, $\forall z^*\in M^*$, i.e., $E^*(z^*)$ is of full row rank. 
 
 Moreover, the DAE $\Xi^*$  has isomorphic solutions with $\Xi_{l,n}$, i.e.,  \red{there exists a local diffeomorphism  $\Psi:U^*\to \Psi(U^*)$, $\Psi(x)=\hat z=(z^*,\bar z)=(z^*,\bar z_1,\ldots,\bar z_{k^*})$,} transforming the set of all solutions of $\Xi_{l,n}$ on $U^*$ into that of  $\hat \Xi_{\hat l,\hat n}=(\hat E,\hat F)$ on $\Psi(U^*)$, where $\hat l=r^*+(n-n^*)$, $\hat n=n$, given by 
	\begin{align}\label{Eq:DAEsep}
\hat\Xi:\left\lbrace  \begin{aligned}
E^*(z^*)\dot z^*=F^*(z^*),\\\bar z_1=0,\ldots, \bar z_{k^*}=0.
\end{aligned}\right. 
	\end{align}
\end{pro}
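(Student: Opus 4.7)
The plan is to follow the algorithm of the Appendix step by step, showing at each iteration $k\le k^*+1$ that $M^c_k$ is a smooth connected embedded submanifold and, simultaneously, producing coordinates $(\bar z_1,\ldots,\bar z_k)$ complementary to those of $M^c_k$ such that $M^c_k=\{\bar z_1=0,\ldots,\bar z_k=0\}$. First I would proceed by induction on $k$. The base case $M^c_0=U_0$ is trivial. For the inductive step, Assumption~1 of the algorithm is exactly what is needed to guarantee that the relation $F(x)\in E(x)T_xM^c_{k-1}$ can be rewritten, after suitable row operations on $E$ and $F$ and the use of functions annihilating $E(x)T_xM^c_{k-1}$, as the vanishing of a smooth map of constant rank; by the constant-rank theorem this zero level set is a smooth embedded submanifold, and its defining functions constitute the new block $\bar z_k$ of coordinates. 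Connectedness is preserved by shrinking $U_k$. Since each strict inclusion $M^c_k\subsetneq M^c_{k-1}$ drops the dimension, the procedure terminates within $n$ steps, giving $k^*\le n$; the termination rule of Definition~\ref{Def:grm} means either $x_p\notin M_{k^*+1}$ or the sequence stabilizes at $M^*=M^c_{k^*+1}$.

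Once $M^*$ is available, Assumption~2 provides directly that $\dim E(x)T_xM^*$ is constant on $U^*$, so the second claim is immediate, and Proposition~\ref{Pro:invariant manifold}(i)--(ii) gives consistency of $x_p$, local maximal invariance of $M^*$, and local coincidence with $S_c$. The explicit description $M^*=\{\bar z_1=0,\ldots,\bar z_{k^*}=0\}$ is exactly the output of the inductive coordinate construction above. The constant-rank equality ${\rm rank}\,E^*(z^*)=r^*$ on $M^*$ is not a separate miracle: by construction of the algorithm, at Step $k^*+1$ all linearly dependent rows of the restriction of $E$ to $TM^*$ have already been eliminated (they are precisely the rows giving rise to the $\bar z$-constraints at previous steps), so the surviving $r^*$ rows of $E^*$ span $E(x)T_xM^*$ for every $x\in M^*\cap U^*$.

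For the final (and main) claim, I would use the diffeomorphism $\Psi:U^*\to\Psi(U^*)$, $x\mapsto (z^*(x),\bar z_1(x),\ldots,\bar z_{k^*}(x))$, supplied by the coordinate construction. In these coordinates the original DAE becomes, after the same sequence of left multiplications by invertible $Q_k$'s that the algorithm performs on the rows of $(E,F)$, a system in which (a) the rows corresponding to the differential part on $M^*$ read $E^*(z^*)\dot z^*=F^*(z^*)$, and (b) the remaining rows are algebraic and, by the very definition of the $\bar z_i$'s, reduce to $\bar z_1=0,\ldots,\bar z_{k^*}=0$. This is precisely $\hat\Xi$ as in (\ref{Eq:DAEsep}). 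Since $\Psi$ and each $Q_k$ are local diffeomorphisms/invertible maps, they map $\mathcal{C}^1$-solutions of $\Xi$ on $U^*$ bijectively onto $\mathcal{C}^1$-solutions of $\hat\Xi$ on $\Psi(U^*)$ (cf.\ Remark~\ref{Obs:1} for the ex-equivalence part).

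The hard part is neither the invariance argument nor the diffeomorphism itself, but the bookkeeping in the inductive step: one must simultaneously track how the rows of $E$ split into a ``differential'' block that will eventually become $E^*$ and ``algebraic'' blocks whose annihilators produce the coordinates $\bar z_k$, and verify that Assumption~1 indeed enforces the constant rank needed so that $M_k$ is a submanifold of $M^c_{k-1}$ with the prescribed defining functions, and that Assumption~2 at Step $k^*+1$ matches the hypothesis $\dim E(x)T_xM^c_{k^*+1}=const.$ required by Proposition~\ref{Pro:invariant manifold}. Once this bookkeeping is in place, the factorization of $\Xi$ into $\Xi^*$ plus trivial constraints, and hence the isomorphism of solution sets, is just a rewriting.
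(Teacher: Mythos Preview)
Your overall strategy matches the paper's, but there are two concrete slips.

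First, you have interchanged the roles of \textbf{Assumption~1} and \textbf{Assumption~2} of the algorithm. Assumption~1 is the constancy of $\rk\tilde E_k(z_{k-1})$ on $W_k$; it is what allows you to find the invertible $Q_k$ splitting off the full-row-rank block $\tilde E^1_k$, and at Step $k^*+1$ it is precisely this assumption that yields $\dim E(x)T_xM^*=\rk E_{k^*}=r^*=const.$ (not Assumption~2, as you write). Assumption~2 is the constancy of $\rk\mathrm{D}\tilde F^2_k$ on $M_k\cap U_k$; \emph{this} is the constant-rank hypothesis that makes $M^c_k$ a smooth embedded submanifold and supplies the coordinates $\bar z_k$. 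Your inductive paragraph attributes both jobs to Assumption~1 and then misassigns the constancy of $\dim E(x)T_xM^*$ to Assumption~2.

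Second, your description of the solution correspondence as ``apply the sequence of $Q_k$'s to the original $(E,F)$ and read off $\hat\Xi$'' is not correct. The maps $Q_k$ live on $W_k\subseteq M^c_{k-1}$ and act on $\Xi_{k-1}$, which has $r_{k-1}$ rows, not $l$; they cannot be composed into a single left multiplication on $\Xi$. In particular $\Xi$ and $\hat\Xi$ have, in general, different numbers of equations ($l$ versus $r^*+(n-n^*)$), so this is not an ex-equivalence. The paper's argument is the step-by-step one you allude to only in your last paragraph: at each $k$ one shows that (a) $z_{k-1}(\cdot)$ solves $\Xi_{k-1}$ iff (b) $(z_k(\cdot),\bar z_k(\cdot))$ solves $\hat\Xi_k$ (this \emph{is} ex-equivalence via $Q_k,\psi_k$), and then (b) iff (c) $\bar z_k(\cdot)\equiv 0$ and $z_k(\cdot)$ solves $\Xi_k$. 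The passage (b)$\Leftrightarrow$(c) is not a row operation: it uses that any solution of $\hat\Xi_k$ is forced by the algebraic block $\hat F^2_k=0$ to satisfy $\bar z_k\equiv 0$ (hence $\dot{\bar z}_k\equiv 0$), after which the remaining rows restrict to $\Xi_k$, while the possibly redundant components of $\hat F^2_k$ are replaced by the equivalent constraints $\bar z_k=0$. Chaining these equivalences for $k=1,\ldots,k^*$ gives the bijection between solutions of $\Xi$ and of $\hat\Xi$; the global diffeomorphism $\Psi$ is the composition $\Psi_{k^*}\circ\cdots\circ\Psi_1$ of the extensions $\Psi_k=(\psi_k,\bar z_{k-1},\ldots,\bar z_1)$, not a single change of variables accompanied by left multiplication.
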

The proof is given in Section \ref{sec:prf_Thm1}. The analysis of Proposition \ref{Pro:ismsol} shows clearly the reason behind Remark \ref{Obs:1}: if we assume two DAEs $\Xi$ and $\tilde \Xi$ {to} have corresponding solutions, this assumption only gives the information that the two internal DAE $\Xi^*$ and $\tilde \Xi^*$, which \red{have} isomorphic solutions with $\Xi$ and $\tilde \Xi$, respectively, are ex-equivalent \cyan{when restricted to $M^*$ and $\tilde M^*$}, respectively, i.e.,   via  a diffeomorphism  between the  submanifolds $M^*$ and $\tilde M^*$ and  an invertible map $Q$ defined on the invariant submanifold $M^*$. We do not know, however, whether the diffeomorphism and the map $Q$ can be extended outside the submanifold $M^*$.  In fact, outside the manifolds, the two DAEs may have completely different behaviors or even different size of system matrices.  This analysis gives a motivation to introduce the concept of \emph{internal equivalence} of two DAEs (see the formal Definition~\ref{Def:in-equivalence}), which is defined by the ex-equivalence of two internal  DAEs. In Proposition \ref{Pro:ismsol}, the   internal  DAE $\Xi^*$ is defined with the help of the geometric reduction algorithm. Now we introduce two notions: local restriction and   full row rank reduction,  which can be used to define the  internal  DAE $\Xi^*$ of a DAE $\Xi$  (which we call the reduction of local $M^*$-restriction of $\Xi$, see Proposition \ref{Pro:restriction})  without going through the  algorithm when  the invariant  submanifold $M^*$   is  a priori given. The local restriction of a DAE to a   submanifold $N$ (invariant or not) is defined as follows.
\begin{defn}[local restriction]\label{Def:restriction}
	Consider a DAE $\Xi_{l,n}=(E,F)$ and   a smooth connected embedded submanifold  $N\subseteq X$ containing a point $x_p$.  Let $\psi(x)=z=(z_1,z_2)$ be local coordinates on a neighborhood $U$ of $x_p$ such that $N\cap U=\left\lbrace z_2=0 \right\rbrace $ and $z_1$ are thus coordinates on $N\cap U$.  The restriction of $\Xi$ to $N\cap U$, called local $N$-restriction of $\Xi$ and denoted  $\Xi|_{N}$, is 
		\begin{align}\label{Eq:restric}
	\Xi|_{N}:	\tilde E(z_1,0)\left[ {\begin{matrix}
			\dot z_1\\
			0
			\end{matrix}} \right]=\tilde F(z_1,0),
		\end{align} 
		where $\tilde E\circ\psi=E \left(  \frac{\partial \psi}{\partial x}\right)^{-1}  $, $\tilde F\circ\psi=F$.
\end{defn}
For any DAE $\Xi_{l,n}=(E,F)$, there may exist some redundant equations (in particular, some trivial algebraic equations $0=0$ and some dependent equations). In the linear case, we have defined the full rank reduction of a linear DAE (see Definition 6.4 of  \cite{chen2019a}). We now generalize this notion of reduction to nonlinear DAEs  to get rid of their redundant equations.
	\begin{defn}[reduction]\label{Def:rednonDAE}
		For a DAE $\Xi_{l,n}=(E,F)$, assume $\rk E(x)=const.=q$. Then there exists $Q :X\rightarrow GL(l,\mathbb R)$ such that $E_1$ of $QE=\left[ \begin{smallmatrix}
		E_1\\ 
		0
		\end{smallmatrix}\right]$  is of full row rank $q$, denote $QF=\left[ \begin{smallmatrix}
	F_1\\ 
	F_2
		\end{smallmatrix}\right]$. Assume that $\rk {\rm D} \red{F_2(x)}=const.=\hat l-q\le l-q$. Then   the full row rank reduction, shortly reduction, of $\Xi$, denoted by $\Xi^{red}$, is the DAE
		\begin{align*}
			\Xi^{red}: \left[ \begin{matrix}
			E_1(x)\\ 
			0
			\end{matrix}\right]\dot x=\left[ \begin{matrix}
			F_1(x)\\ 
			\hat F_2(x)
			\end{matrix}\right],
		\end{align*}
where $\hat F_2:X\to \mathbb R^{\hat l-q}$ with  ${\rm D} \hat F_2$ being all independent rows of ${\rm D} F_2$.
\end{defn} 
\begin{rem}
	 Clearly, since the choice of $Q(x)$ is not unique, the reduction of $\Xi$ is not unique \red{either}. Nevertheless, since $Q(x)$ preserves the solutions, each reduction $\Xi^{red}$ has the same solutions as the original DAE $\Xi$.
\end{rem} 

For a locally invariant submanifold $ M$, we consider the  local $M$-restriction $\Xi|_{M}$ of $\Xi$, and then  we construct a reduction of $\Xi|_{M}$ and denote it by $\Xi|^{red}_{M}$. Notice that the order matters: to construct $\Xi|^{red}_{M}$, we first restrict and then reduce while  reducing first and then restricting will  not give $\Xi|^{red}_{ M}$ but another DAE $\Xi^{red}|_{ M}$, \cyan{which may have redundant equations as seen from the following example}. 
\begin{exa}
Consider the following nonlinear DAE  $\Xi:\left[ \begin{smallmatrix}
1&1\\
x&0\\
0&0\\
e^{y}&e^{y}
\end{smallmatrix}\right]\left[ \begin{smallmatrix}
\dot x\\
\dot y 
\end{smallmatrix}\right]= \left[ \begin{smallmatrix}
x^2\\
x^3\\
xy\\
e^yx^2
\end{smallmatrix}\right]$ defined on $X=\mathbb R^2$. Fix a point $(x_p,y_p)=(1,0)$, then it is clear that $M^*=\left\lbrace (x,y)\in \mathbb R^2:x>0,y=0 \right\rbrace $ is a locally maximal invariant submanifold around $x_p$. \red{Set} $\psi(x,y)=(z_1,z_2)=(x,y)$ as coordinates on $X$. Then the $M^*$-restriction of $\Xi$, by Definition \ref{Def:restriction},  is  $\Xi|_{M^*}:\left[ \begin{smallmatrix}
1\\
z_1\\
0\\
1
\end{smallmatrix}\right]\dot z_1= \left[ \begin{smallmatrix}
z_1^2\\
z_1^3\\
0\\
z_1^2
\end{smallmatrix}\right]$ and the reduction of $\Xi|_{M^*}$ is $\Xi|^{red}_{M^*}:q(z_1)\dot z_1=q(z_1)z^2_1$, where $q(z_1)$ can be any non-zero function (illustrating that the reduction is not unique). \red{On the other hand,} $\Xi^{red}|_{M^*}$  is $\left[ \begin{smallmatrix}
1\\
z_1\\
0
\end{smallmatrix}\right]\dot z_1= \left[ \begin{smallmatrix}
z_1^2\\
z_1^3\\
0
\end{smallmatrix}\right]$, and clearly, has redundant equations.
\end{exa}
\begin{pro}\label{Pro:restriction}
	{Consider a DAE $\Xi_{l,n}=(E,F)$ and fix a  point $x_p$. Let $M$ be an $\bar n$-dimensional locally invariant submanifold of $\Xi$ around $x_p$. Assume that $\dim \,E(x)T_xM=const.=\bar r$ for all $x\in M$ around $x_p$. Then \cyan{any} reduction $\Xi|^{red}_M$  of the local $M$-restriction of $\Xi$  is a DAE of the form (\ref{Eq:DAE0}) and the dimensions related to $\Xi|^{red}_M$ are $\bar r$ and $\bar n$, i.e.,  $\Xi|^{red}_M=\bar \Xi_{\bar r,\bar n}$. Moreover, the matrix $\bar E$ of   $\bar \Xi_{\bar r,\bar n}=(\bar E,\bar F)$ is of full row rank $\bar r$.}
\end{pro}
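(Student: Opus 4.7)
The plan is to work in coordinates adapted to $M$, translate the constant-rank and invariance hypotheses through the coordinate change, and then read off the reduction directly.

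First, following Definition \ref{Def:restriction}, I would choose local coordinates $\psi(x)=z=(z_1,z_2)$ on a neighborhood $U$ of $x_p$ so that $M\cap U=\{z_2=0\}$ with $z_1\in\mathbb{R}^{\bar n}$. Setting $\tilde E(z)=E(\psi^{-1}(z))\,(\partial\psi/\partial x)^{-1}$ and splitting $\tilde E(z_1,0)=[\tilde E_1(z_1)\ \tilde E_2(z_1)]$ according to the split of $z$, the local $M$-restriction is
$$\Xi|_M:\ \tilde E_1(z_1)\dot z_1=\tilde F(z_1,0).$$
The crucial translation is $\im \tilde E_1(z_1)=E(x)T_xM$: indeed $T_zM=\{(v_1,0):v_1\in\mathbb{R}^{\bar n}\}$ in the new coordinates, and the coordinate change identifies $E(x)T_xM$ with $\tilde E(z_1,0)T_zM=\im \tilde E_1(z_1)$. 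The hypothesis $\dim E(x)T_xM=\bar r$ therefore becomes $\rk \tilde E_1(z_1)=\bar r$ on $M\cap U$.

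Next, I would apply Definition \ref{Def:rednonDAE} to $\Xi|_M$. Since $\rk \tilde E_1$ is constant, there exists a smooth $Q:M\cap U\to GL(l,\mathbb{R})$ with
$$Q(z_1)\tilde E_1(z_1)=\begin{bmatrix}\bar E(z_1)\\ 0\end{bmatrix},\qquad Q(z_1)\tilde F(z_1,0)=\begin{bmatrix}F_1(z_1)\\ F_2(z_1)\end{bmatrix},$$
with $\bar E$ of full row rank $\bar r$. The key step is to show $F_2\equiv 0$ on $M\cap U$. This is exactly where the invariance of $M$ enters: by Proposition \ref{Pro:invariant submanifold}, $F(x)\in E(x)T_xM$ for $x\in M$ near $x_p$, which in the new coordinates reads $\tilde F(z_1,0)\in \im \tilde E_1(z_1)$; multiplying by $Q$ then forces $F_2\equiv 0$.

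Finally, with $F_2\equiv 0$ we have ${\rm D} F_2\equiv 0$, so the constant-rank hypothesis on ${\rm D} F_2$ in Definition \ref{Def:rednonDAE} is trivially satisfied with $\hat l-q=0$ and $\hat F_2=0$. Hence any reduction of $\Xi|_M$ takes the form
$$\bar\Xi:\ \bar E(z_1)\dot z_1=F_1(z_1),$$
which is a DAE on the $\bar n$-dimensional manifold $M$ with $\bar E$ a $\bar r\times\bar n$ matrix of full row rank $\bar r$, i.e., $\bar\Xi_{\bar r,\bar n}$ in the paper's notation, independently of the choice of $Q$. I do not expect a serious obstacle: everything is routine once the adapted coordinates are chosen, the only genuine content being the identification $\im \tilde E_1(z_1)=E(x)T_xM$ and the invariance-forced vanishing $F_2\equiv 0$, which together pin down both the row dimension $\bar r$ of the reduction and the full-row-rank property of $\bar E$.
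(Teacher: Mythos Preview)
Your proposal is correct and follows essentially the same approach as the paper, which simply refers back to the construction of $\Xi|^{red}_M$ in equation (\ref{Eq:restrictedDAE}) within the proof of Proposition~\ref{Pro:invariant submanifold}. Your argument reproduces that construction explicitly, including the key points $\im \tilde E_1(z_1)=E(x)T_xM$ and the invariance-forced vanishing $F_2\equiv 0$; your additional remark that ${\rm D} F_2\equiv 0$ trivially satisfies the constant-rank hypothesis of Definition~\ref{Def:rednonDAE} is a clarification the paper omits.
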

\begin{proof}
	We skip the proof since we have already constructed $\Xi|^{red}_{M}$ for $M$ being an invariant submanifold, see (\ref{Eq:restrictedDAE}) in the proof of Proposition \ref{Pro:invariant submanifold}; it is clear that  $\bar E=[\bar E^1_1 \ \bar E^2_1]$, $\bar F=\bar F_1$ and $\rk \bar E=\bar r$.

\end{proof}
The definition of the internal equivalence of two DAEs is given as follows.
\begin{defn}\label{Def:in-equivalence}(internal equivalence)
	Consider two DAEs  $\Xi=(E,F)$ and $\tilde \Xi=(\tilde E,\tilde F)$, and fix two points $x_p\in X$ and $\tilde x_p\in \tilde X$. Let $M^*$ and $\tilde M^*$ be two locally maximal invariant submanifolds of $\Xi$ and $\tilde \Xi$, around $x_p$ and $\tilde x_p$, respectively.  Assume that  $\dim E(x)T_xM^*=const.$ for $x\in M^*$ around $x_p$ and  $\dim \tilde E(\tilde x)T_{\tilde x}\tilde M^*=const.$ for $\tilde x\in \tilde M^*$ around $\tilde x_p$. 
	Then, $ \Xi$ and $\tilde \Xi$ are called {locally} internally equivalent, shortly in-equivalent,  if $ \Xi|^{red}_{M^*}$ and $\tilde \Xi|^{red}_{\tilde M^*}$  are ex-equivalent, {locally around $x_p$ and $\tilde x_p$, respectively}. Denote the in-equivalence of two DAEs by $ \Xi\mathop  \sim \limits^{in} \tilde \Xi$.
\end{defn}
\begin{rem}   
	\cyan{Under the assumption that}   $\dim E(x)T_xM^*$  and  $\dim \tilde E(\tilde x)T_{\tilde x}\tilde M^*$ are constant, by Proposition~\ref{Pro:restriction} \red{applied to $M^*$}, we have $\Xi|^{red}_{M^*}=\Xi^*_{r^*,n^*}$ and  $\tilde \Xi|^{red}_{M^*}=\tilde \Xi^*_{\tilde r^*,\tilde n^*}$, where $r^*=\dim\, E(x)T_xM^*$, $n^*=\dim\, M^*$ and $\tilde r^*=\dim\, \tilde E(x)T_{\tilde x}\tilde M^*$, $\tilde n^*=\dim\, \tilde M^*$. The dimensions $l$ and $n$, related to $\Xi$, and $\tilde l$ and $\tilde n$ related to $\tilde \Xi$ are not required to be the same. However, if $\Xi$ and $\tilde \Xi$ are in-equivalent, then by definition,  $ \Xi|^{red}_{M^*}=\Xi^*_{r^*,n^*}$ and $\tilde \Xi|^{red}_{\tilde M^*}=\tilde \Xi^*_{\tilde r^*,\tilde n^*}$ are locally ex-equivalent and thus the dimensions related to them have to be the same, i.e., $r^*=\tilde r^*$ and $n^*=\tilde n^*$ \cyan{(and $l^*=r^*=\tilde r^*=\tilde l^*$ since all reductions of $\Xi$ and $\tilde \Xi$ are of full row rank).}
\end{rem}
Now {we will} study the uniqueness of solutions of DAEs with the help of the {notion of} internal equivalence (\blue{some other results of uniqueness of DAE solutions can be consulted in e.g., \cite{reich1991existence,rabier1994geometric}}).  We will say that a solution $x:I\to M^*$ \red{of a DAE $\Xi$} satisfying  $x(t_0)=x_0$, where $t_0\in I$ and $x_0\in M^*$, is maximal if for any solution $\tilde x:\tilde I\to M^*$ such that $t_0\in \tilde I$, $\tilde x(t_0)=x_0$ and $x(t)=\tilde x(t)$, $\forall t\in I\cap \tilde I$, we have $\tilde I \subseteq I$.
\begin{defn}\label{Def:in-regular}(internal  regularity)
	Consider a DAE $\Xi_{l,n}=(E,F)$ and
	let $M^*$ be a locally maximal invariant submanifold around a point $x_p\in M^*$.  Then $\Xi$ is called locally \emph{internally regular} (around $x_p$) if there exists a neighborhood $U\subseteq X$ of  $x_p$ such that for any point $x_0\in M^*\cap U$, there exists only one maximal solution $x:I\to M^*\cap U$ satisfying  $x(t_0)=x_0$	for a certain $t_0\in I$.
\end{defn}  
\begin{thm}\label{Thm:1}  
	Consider a DAE $\Xi_{l,n}=(E,F)$ and let $M^*$ be an $n^*$-dimensional locally maximal invariant submanifold around a point $x_p\in M^*$. Assume that   $\dim\, E(x)T_x{M^*}=const.=r^*$    for all $x\in M^*$ around $x_p$.  Then the following conditions are   equivalent:
	\begin{enumerate}[(i)]
		\item $\Xi$ is internally regular around $x_p$;
		\item  $\dim\, M^*=\dim\, E(x)T_xM^*$, i.e., $n^*=r^*$, for all $x\in M^*$ around $x_p$;
		\item   $\Xi$ is locally internally equivalent to 
		\begin{align}\label{Eq:ODE}
		{{\dot z^*}} = f^*\left( {{z^*}} \right),	
		\end{align}	
	\red{for $z^*\in M^*\cap U$, where $U$ is a neighborhood of $x_p$ and $f^*$ is a smooth vector field on $M^*\cap U$.}
	\end{enumerate}
\end{thm}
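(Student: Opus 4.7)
The plan is to prove (ii) $\Leftrightarrow$ (iii) first, since it is essentially a rank computation, then (iii) $\Rightarrow$ (i) via classical ODE theory, and close the loop with (i) $\Rightarrow$ (ii) by contraposition.

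For (ii) $\Leftrightarrow$ (iii): Applying Proposition \ref{Pro:restriction} to $M^*$ identifies $\Xi|^{red}_{M^*}$ with $\Xi^*_{r^*,n^*} = (E^*,F^*)$, where $E^*$ has full row rank $r^*$ on $M^*$ near $x_p$. If $r^* = n^*$, then $E^*(z^*)$ is a smoothly varying square invertible matrix, so left-multiplying by $Q(z^*) = (E^*(z^*))^{-1}$ exhibits ex-equivalence of $\Xi|^{red}_{M^*}$ with $\dot z^* = (E^*)^{-1} F^* =: f^*(z^*)$, whence in-equivalence of $\Xi$ with this ODE. Conversely, if $\Xi|^{red}_{M^*}$ is ex-equivalent to an ODE of the form \eqref{Eq:ODE}, the target has $E$-matrix equal to $I_{n^*}$, of row rank $n^*$, and ex-equivalence preserves this rank (by the defining relation $\tilde E = Q E (\partial \psi / \partial x)^{-1}$ with $Q$ pointwise invertible and $\psi$ a diffeomorphism), forcing $r^* = n^*$.

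For (iii) $\Rightarrow$ (i): Classical existence-uniqueness for smooth ODEs yields a unique maximal integral curve of $f^*$ through any initial point in $M^* \cap U$. Since ex-equivalence preserves trajectories (Remark \ref{Obs:1}), and the reduction of the $M^*$-restriction preserves solutions (see the remark following Definition \ref{Def:rednonDAE}), unique ODE solutions transfer back to unique solutions of $\Xi|_{M^*}$, which coincide with the solutions of $\Xi$ contained in $M^* \cap U$. This is internal regularity.

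The step requiring the most care is (i) $\Rightarrow$ (ii), which I would handle by contraposition: assume $r^* < n^*$ and construct two distinct solutions through $x_p$. By invariance of $M^*$ and Proposition \ref{Pro:invariant submanifold}, $F^*(z^*) \in E^*(z^*) T_{z^*}M^*$ on $M^*$ near $x_p$; the constant-rank hypothesis on $E^*$ then yields a smooth local particular vector field $f^*_p$ with $E^* f^*_p = F^*$ (for instance by inverting $E^*$ on a smooth complement of $\ker E^*$ in $TM^*$). Since $\ker E^*$ is a smooth distribution of constant positive rank $n^* - r^*$, one can pick two smooth local sections $v_1, v_2$ of $\ker E^*$ with $v_1(x_p) \neq v_2(x_p)$. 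Then $f^*_p + v_1$ and $f^*_p + v_2$ are smooth vector fields on $M^*$ both satisfying $E^*(f^*_p + v_i) = F^*$, and their integral curves through $x_p$ have distinct initial velocities, hence are distinct solutions of $\Xi|^{red}_{M^*}$. Reversing the reduction-restriction identification produces two distinct maximal solutions of $\Xi$ through $x_p$ inside $M^* \cap U$, contradicting internal regularity.
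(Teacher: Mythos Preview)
Your proposal is correct and follows essentially the same route as the paper. Both arguments pass through $\Xi|^{red}_{M^*}=(E^*,F^*)$ with $E^*$ of full row rank $r^*$ (via Proposition~\ref{Pro:restriction}), identify (ii)$\Leftrightarrow$(iii) with invertibility of the square matrix $E^*$, and derive the equivalence with (i) from the presence or absence of a nontrivial $\ker E^*$; the paper phrases the latter as ``no free variables in the explicitation'' (referring to the control-system form built in the proof of Proposition~\ref{Pro:invariant submanifold}), whereas you spell out the contrapositive explicitly by picking two sections of $\ker E^*$ to produce distinct integral curves, but the content is the same.
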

The proof is given in Section \ref{sec:prf_Thm1}.
\begin{rem}\label{Rem:in-eq}
	
	
 Theorem \ref{Thm:1} is a nonlinear generalization of the results on the internal  regularity of  linear  DAEs in \cite{chen2019a} (see also \cite{berger2015}, where the internal regularity is called autonomy). As stated in Theorem 6.11 of \cite{chen2019a}, a linear DAE $\Delta=(E,H)$, given by (\ref{Eq:DAE2}), is internally regular if and only if the maximal invariant subspace $\mathscr M^*$ of $\Delta$ (i.e., the largest subspace such that $H\mathscr M^*\subseteq E\mathscr M^*$) satisfies $\dim\, \mathscr M^*=\dim\, E\mathscr M^*$. {A nonlinear counterpart of the last condition is (ii) of Theorem \ref{Thm:1} and thus} $M^*$ is a {natural} nonlinear generalization of $\mathscr M^*$. \red{Observe} that $M^*$ is the limit of $M_k$ as $\mathscr V^*$ is the limit of $\mathscr V_k$, \red{defined in Remark \ref{Rem:wong}}. Moreover, we have shown in \cite{chen2019a} that the maximal invariant subspace $\mathscr M^*=\mathscr V^*$, where $\mathscr V^*$ coincides with the limit of the Wong sequence $\mathscr V_k$ \cyan{defined in Remark~\ref{Rem:wong}}. 
\end{rem}
\begin{exa}\label{Ex:ir} 
Consider a  DAE $\Xi_{6,6}=(E,F)$  with the generalized state $x=(x_1,x_2,x_3,x_4,x_5,x_6)\in X$, \red{where $X=\left\lbrace x\in \mathbb R^6:x_1\ne x_6,x_6>0 \right\rbrace $,}
\begin{align}\label{Eq:DAEex}
\left[ \begin{smallmatrix}
-\ln x_6 &x_6(x_3+x_5) &\frac{x_1x_5\ln x_6}{x_1-x_6}&0&0&0\\
0&0&0&1&0&0\\
0&0&1&0&1-\frac{x_1}{x_6}&0\\
0&0&0&0&x_5&-1\\
0&0&0&0&0&0\\
0&0&0&0&0&0
\end{smallmatrix}\right] \left[ \begin{smallmatrix}
\dot x_1\\
\dot x_2\\
\dot x_3\\
\dot x_4\\
\dot x_5\\
\dot x_6
\end{smallmatrix}\right]=\left[ \begin{smallmatrix}
(x_1-x_6)(x_3+x_5)-(x_2x_6-x_6^2-x_1)\ln x_6\\
x_5-x_2+x_6\\
(1-\frac{x_1}{x_6})(x_6^2-x_6x_2+x_4)\\
x_6+x_5(x_6^2-x_6x_2+x_4)\\
\frac{x_1}{x_6}\\
x_3+x_5
\end{smallmatrix}\right].
\end{align}
We consider $\Xi$ around a point $x_p=(0,1,0,0,0,1)$ and apply \red{to $\Xi$} the algorithm of the Appendix.  

 Step 1: We have $\rk E(x)=r_1=4$ on $U_1=X $. Since $E$ is already in the desired form, set $Q_1=I_6$ to get
$$
M_1=\left\lbrace x\in X: Q_1F(x)\in \im Q_1E(x)\right\rbrace=\left\lbrace x\in X: \frac{x_1}{x_6}=0, \  x_3+x_5=0 \right\rbrace. 
$$
It is clear that $x_p\in M_1$ and $M^c_1=M_1\cap U_1=M_1$ is a locally smooth connected embedded submanifold and $n_1=\dim M^c_1=4$. Then choose   new   coordinates $\bar z_1=(\bar x_1,\bar x_3)=(\frac{x_1}{x_6},x_3+x_5)$ and keep the \red{remaining} coordinates $ z_1=(x_2,x_4,x_5,x_6)$ unchanged. The system in new coordinates, denoted $\hat \Xi_1$, take the form
$$
\hat \Xi_1:\left[ \begin{smallmatrix}
	x_5\bar  x_3& 0&\frac{-\bar x_1x_5\ln x_6}{\bar x_1-1}&\frac{-\bar x_1\ln x_6}{x_6}&-x_6\ln x_6&\frac{\bar x_1x_5\ln x_6}{\bar x_1-1}\\
	0&1&0&0&0&0\\
	0&0&-\bar x_1&0&0&1\\
	0&0&x_5&-1&0&0\\
	0&0&0&0&0&0\\
	0&0&0&0&0&0
\end{smallmatrix}\right] \left[ \begin{smallmatrix}
	\dot x_2\\
	\dot x_4\\
	\dot x_5\\
	\dot x_6\\
	\dot {\bar x}_1\\
	\dot {\bar x}_3
\end{smallmatrix}\right]=\left[ \begin{smallmatrix}
	\bar x_3x_6(\bar x_1-1) -x_6\ln x_6(x_2-x_6-\bar x_1)\\
	x_5-x_2+x_6\\
	(1-\bar x_1)(x_6^2-x_6x_2+x_4)\\
	x_6+x_5(x_6^2-x_6x_2+x_4)\\
	\bar x_1\\
	\bar x_3
\end{smallmatrix}\right]. 
$$
By setting  $\bar z_1=(\bar x_1,\bar x_3)=0$,  we get the reduction of $M^c_1$-restriction of $\hat \Xi_1$ (see Definition \ref{Def:restriction} and \ref{Def:rednonDAE}) as
$$
\Xi_1=\hat \Xi_1|^{red}_{M^c_1}:\left[ \begin{smallmatrix}
0& 0&0&0\\
0&1&0&0\\
0&0&0&0&\\
0&0&x_5&-1
\end{smallmatrix}\right] \left[ \begin{smallmatrix}
\dot x_2\\
\dot x_4\\
\dot x_5\\
\dot x_6
\end{smallmatrix}\right]=\left[ \begin{smallmatrix}
-x_6\ln x_6(x_2-x_6)\\
x_5-x_2+x_6\\
(x_6^2-x_6x_2+x_4)\\
x_6+x_5(x_6^2-x_6x_2+x_4)
\end{smallmatrix}\right]. 
$$
Step 2: Consider the DAE $\Xi_1=(E_1,F_1)$. We have $\dim E(x)T_xM^c_1=\rk E_1(z_1)=r_2=2$ around $x_p$ (on $W_2= M^c_1\cap U_2=M^c_1$, where $U_2=U_1=X$). Set $Q_1=\left[ \begin{smallmatrix}
0& 1&0&0\\
0&0&0&1\\
1&0&0&0&\\
0&0&1&0
\end{smallmatrix}\right]$ and define $M_2$ by
$$
M_2=\left\lbrace   z_1: Q_1F_1(\bar z_1)\in \im Q_1 E_1(  z_1) \right\rbrace =\left\lbrace   z_1: x_2-x_6=0, \  x_6^2-x_6x_2+x_4=0 \right\rbrace .
$$
It is clear that $x_p\in M_2$, $M^c_2=M_2\cap U_2=M_2$ and $n_2=\dim M^c_2=2$. Then choose new coordinates $\bar z_2=(\bar  x_2, \bar x_4)=(x_2-x_6, x_6^2-x_6x_2+x_4)$ and keep the remaining coordinates $z_2=(x_5,x_6)$ unchanged. For the system in new coordinates, denoted  $\hat \Xi_2$, by a similar procedure as in Step $1$, we can define the reduction of $M^c_1$-restriction of $\hat \Xi_2$  as
$$
\begin{small} 
\Xi_2=\hat \Xi_2|^{red}_{M^c_2}:\left[ \begin{matrix}
0& 0\\
x_5&-1
\end{matrix}\right] \left[ \begin{matrix}
\dot x_5\\
\dot x_6
\end{matrix}\right]=\left[ \begin{matrix}
x_5\\
x_6
\end{matrix}\right].
\end{small}
$$
Step 3: For $\Xi_2=(E_2,F_2)$,  we have $\dim E(x)T_xM^c_2=\rk E_2( z_2)=r_2=1$ in $W_3=M^c_2$. By definition,
$M^c_3=M_3=\left\lbrace  z_2:x_5=0 \right\rbrace $.  It can be observed that $\dim M^c_3=n_3=1$ and by a similar construction as \red{at} former steps, we have 
$$
\Xi_3=\bar \Xi_2|^{red}_{M^c_3}: -\dot {x}_6=x_6.
$$
Step 4: We have $M^c_4=M^c_3$ ($\dim M^c_4=n_4=n_3=1$) and $\dim E(x)T_xM^c_4=r_4=1$, thus  $k^*=3$ \red{and  the algorithm stops at Step $k^*+1=4$}.  Therefore, by Proposition \ref{Pro:ismsol},   
 $$\cyan{M^*=M^c_4=\left\lbrace x\in \mathbb R^6:x_1=x_3=x_4=x_5,x_2=x_6,x_6>0\right\rbrace= \left\lbrace x\in \mathbb R^6:\bar x_1=\cdots=\bar x_5=0,\bar x_6>0\right\rbrace}$$ 
is locally maximal invariant and $x_p\in M^*$ is a consistent point. Moreover, since \cyan{$x_6(t)=e^{-t}x_{60}$} is the unique maximal solution of $\Xi^*=\Xi_3$ passing through \red{$x_{0}\in M^*$}, we have that $x(t)=\Psi^{-1}(x_6(t), 0, 0, 0, 0,0)=(0, e^{-t}x_{60}, 0, 0, 0,e^{-t}x_{60})$     
 is the unique maximal solution of $\Xi$ passing through $x_0=\Psi^{-1}(x_{60},0, 0, 0, 0,0)\in M^*$,  where $\Psi(x)=(x_6,\frac{x_1}{x_6},x_2-x_6,x_3+x_5,x^2_6-x_2x_6+x_4,x_5)$ is a local diffeomorphism (actually, $z^*=x_6$). Hence the DAE $\Xi$ is internally regular around $x_p$ by definition, which \red{illustrates} the results of Theorem~\ref{Thm:1} since $\dim M^*=n_4=\dim E(x)T_xM^*=r_4=1$, and $\Xi$ is   in-equivalent to the ODE: $\dot x_6=-x_6$.
\end{exa}
\section{Explicitation with driving variables of nonlinear DAEs}\label{subsection:3.2}

The explicitation (with driving variables) of a DAE $\Xi$ is the following  procedure.  
\begin{itemize}
	\item For a DAE $\Xi_{l,n}=(E,F)$, assume that ${\rm rank\,}E(x)=const.=q$ in a neighborhood $U\subseteq X$ of a point $x_p\in X$. Then there exists $Q:U\rightarrow GL(l,\mathbb{R})$ such that $Q(x)E(x)=\left[ {\begin{smallmatrix}
		{{E_1}(x)}\\
		0
		\end{smallmatrix}} \right]$,  where $E_1:U\rightarrow \mathbb{R}^{q\times n}$, and ${\rm rank\,}E_1(x)=q$.  Thus $\Xi$ is, \red{locally on $U$}, ex-equivalent via $Q(x)$ to
	\begin{align}\label{Eq:DAE6}
	\left\{ \begin{array}{c@{\ }l}
	{E_1}(x)\dot x &= {F_1}(x),\\
	0 &= {F_2}(x),
	\end{array} \right.
	\end{align}
	where $Q(x)F(x)=\left[  {\begin{smallmatrix}
		F_1(x)\\
		F_2(x)
		\end{smallmatrix}} \right] $, and where $F_1:U\to \mathbb R^{q}$, $F_2:U\to \mathbb R^{l-q}$.
	\item The matrix $E_1(x)$ is of full row rank $q$, choose its right inverse $E^{\dagger}_1(x)$, i.e., $E_1E^{\dagger}_1=I_q$ and set $f(x)=E_1^{\dagger}(x)F_1(x)$. The collection of all $\dot x$ satisfying  $E_1(x)\dot x=F_1(x)$ of (\ref{Eq:DAE6}) is given by the differential inclusion: 
	\begin{align}\label{Eq:admicon}
	\dot x\in f(x)+\ker E_1(x)=f(x)+\ker E(x).
	\end{align}
	\item Since $\ker E(x)$ is a distribution of constant rank $n-q$,  choose {locally $m=n-q$} independent vector {fields} $g_1,\ldots,g_m$ on $X$ such that $\ker E(x)={\rm span}\left\lbrace g_1,\ldots,g_m \right\rbrace(x) $. Then by introducing \emph{driving variables} $v_i$, $i=1,\ldots,m$, we  parametrize the affine distribution $f(x)+\ker E_1(x)$ and {thus} all solutions of (\ref{Eq:admicon}) {are given by} all solutions (corresponding to all controls $v_i(t)\in \mathbb R$) of
	\begin{align}\label{Eq:controlsyst} 
	\dot x=f(x)+\sum\limits_{i = 1}^m {g_i(x)}v_i.
	\end{align}
	\item    Form a matrix $g(x)=[g_1(x),\ldots,g_m(x)]$. Then, we  rewrite  equation (\ref{Eq:controlsyst}) as $\dot x = f(x) +  g(x)v$, where $v=(v_1,\ldots,v_m)$, and set $h(x)=F_2(x)$.  We claim, see Proposition \ref{Pro:exlsol} below, that all solutions of DAE (\ref{Eq:DAE6}) (and thus of the original DAE $\Xi$)  are in one-to-one correspondence with all solutions (corresponding to all $\mathcal C^0$-controls $v(t)$) of
	\begin{align}\label{Eq:proExpl}
	\left\lbrace  \begin{array}{c@{\ }l}
	\dot x &= f(x) +  g(x)v,\\
	0 &= h(x).
	\end{array} \right.
	\end{align}
 
	\item {To (\ref{Eq:proExpl}), we attach the control system $\Sigma=\Sigma_{n,m,p}=(f,g,h)$, given by} 
	\begin{align}\label{Eq:noncontrolsys}
	\Sigma:\left\lbrace \begin{array}{c@{\ } l}
	\dot x &= f(x) +  g(x)v,\\
	y &= h(x),
	\end{array} \right.
	\end{align} 
	where $n=\dim\,x$, $m=\dim\,v$, $p=\dim\,y$. Clearly, $m=n-q$ and $p=l-q$ (we will use these dimensional relations in the following discussion). In the above way, we attach a control system $\Sigma$  to a DAE $\Xi$ \red{(actually, a class of control systems, see Proposition \ref{Pro:NonDAEexpl} below)}.
\end{itemize}
\begin{defn}(explicitation with driving variables)\label{Def:expl}
	Given a DAE $\Xi_{l,n}=(E,F)$, fix a point $x_p\in X$ and assume that   $\rk E(x)=const.$  locally around $x_p$. Then, by {a} $(Q,v)$-explicitation  we will call \red{any} control system $\Sigma=\Sigma_{n,m,p}=(f,g,h)$ given by (\ref{Eq:noncontrolsys}) 
with $$f(x)=E_1^{\dagger }F_1(x), \ \ \ {\rm Im\,} g(x)=\ker {E}(x), \ \ \ h(x)=F_2(x),$$ where 
		$
		QE(x)=\left[ {\begin{smallmatrix}
			{{E_1}(x)}\\
			0
			\end{smallmatrix}} \right]$, $QF(x)=\left[ {\begin{smallmatrix}
			F_1(x)\\
			F_2(x)
			\end{smallmatrix}} \right].
		$
		The class of all $(Q,v)$-explicitations will be called shortly the explicitation class. If a particular control system $\Sigma$ {belongs} to the explicitation class of $\Xi$, we will write  $\Sigma\in\mathbf{Expl}(\Xi)$.
\end{defn}
Notice that a given $\Xi$ has many $(Q,v)$-explicitations since the construction of $\Sigma\in \mathbf{Expl}(\Xi)$ is not unique:  there is a freedom in choosing $Q(x)$, $E_1^{\dagger}(x)$, and $g(x)$. As a consequence of this non-uniqueness of construction, the explicitation  $\Sigma$ of $\Xi$ is a system defined up to a \emph{feedback transformation}, an \emph{output multiplication} and a \emph{generalized output injection} (or, {equivalently,} a class of systems).
\begin{pro}\label{Pro:NonDAEexpl}
	Assume that a control system $\Sigma_{n,m,p}=(f,g,h)$ is a $(Q,v)$-explicitation of a DAE $\Xi_{l,n}=(E,F)$ corresponding to {a} choice of invertible matrix $Q(x)$, right inverse $E_1^{\dagger}(x)$, and matrix $g(x)$. Then a control system  $\tilde \Sigma_{n,m,p}=(\tilde f,\tilde g,\tilde h)$ is a $(\tilde Q,\tilde v)$-explicitation of $\Xi_{l,n}$ corresponding to {a} choice of invertible matrix $\tilde Q(x)$, right inverse $\tilde E_1^{\dagger}(x)$, and matrix $\tilde g(x)$  if and only if $\Sigma$ and $\tilde \Sigma$ are equivalent via a $v$-feedback transformation of the form $v=\alpha(x)+\beta(x)\tilde v$, a generalized output injection $\gamma(x)y=\gamma(x)h(x)$ and an output multiplication $\tilde y=\eta(x)y$, 
	which map 
	\begin{align}\label{Eq:map}
	f\mapsto \tilde f=f+\gamma h+g\alpha, \ \ \ g\mapsto  \tilde g=g\beta, \ \ \ h\mapsto  \tilde h=\eta h,
	\end{align}
	where $\alpha$, $\beta$ and $\eta$ are smooth matrix-valued functions \red{of appropriate sizes}, \red{$\gamma=(\gamma_1,\ldots,\gamma_p)$ is a $p$-tuple of smooth vector fields on $X$,} and $\beta$ and $\eta$ are invertible.
\end{pro}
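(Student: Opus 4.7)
The plan is to prove the equivalence in two separate directions, both driven by the block structure forced on $M(x) := \tilde{Q}(x)Q(x)^{-1}$ by the requirement that $QE$ and $\tilde{Q}E$ both take the form $\left[\begin{smallmatrix} \cdot \\ 0 \end{smallmatrix}\right]$ with a full-row-rank upper block of size $q\times n$.

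For the direction $\tilde\Sigma\in\mathbf{Expl}(\Xi) \Rightarrow$ the listed transformations exist, I would write $M = \left[\begin{smallmatrix} A & B \\ C & D \end{smallmatrix}\right]$ partitioned as $(q,l-q)$. The identity $M\left[\begin{smallmatrix} E_1 \\ 0 \end{smallmatrix}\right] = \left[\begin{smallmatrix}\tilde E_1 \\ 0\end{smallmatrix}\right]$ yields $CE_1 = 0$, which forces $C = 0$ since $E_1$ admits a right inverse; invertibility of $M$ then makes $A$ and $D$ pointwise invertible, and $\tilde E_1 = AE_1$. Reading off the bottom block of $\tilde Q F = M(QF)$ gives $\tilde h = \tilde F_2 = Dh$, so set $\eta := D$. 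From $E_1\tilde f = A^{-1}\tilde F_1 = F_1 + A^{-1}Bh$ and $E_1 f = F_1$, one obtains $E_1(\tilde f - f) = A^{-1}Bh$; picking, e.g., $\gamma := E_1^{\dagger}A^{-1}B$ places $\tilde f - f - \gamma h$ into $\ker E_1 = \im g$, so $\tilde f - f - \gamma h = g\alpha$ for a uniquely determined smooth $\alpha$. Finally, $\im \tilde g = \ker\tilde E = \ker E = \im g$ with both matrices of full column rank $m$ gives $\tilde g = g\beta$ for some invertible $\beta$.

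For the converse, given $\tilde f = f + \gamma h + g\alpha$, $\tilde g = g\beta$, $\tilde h = \eta h$, I would construct compatible data explicitly. Define $\tilde Q := \left[\begin{smallmatrix} I_q & E_1\gamma \\ 0 & \eta \end{smallmatrix}\right]Q$, which is invertible because $\eta$ is. A direct computation gives $\tilde Q E = \left[\begin{smallmatrix} E_1 \\ 0\end{smallmatrix}\right]$, so $\tilde E_1 = E_1$, and $\tilde Q F = \left[\begin{smallmatrix} F_1 + E_1\gamma h \\ \eta h\end{smallmatrix}\right]$, so $\tilde F_2 = \tilde h$ automatically. Take $\tilde g := g\beta$, which still spans $\ker E$. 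It then remains to pick a smooth right inverse $\tilde E_1^{\dagger}$ of $E_1$ such that $\tilde E_1^{\dagger}\tilde F_1 = \tilde f$. Writing any right inverse as $\tilde E_1^{\dagger} = E_1^{\dagger} + g\xi$ and expanding reduces the condition to the pointwise linear equation $\xi(x)\tilde F_1(x) = \alpha(x) + \tau(x)h(x)$, where $\tau$ is defined by $(I - E_1^{\dagger}E_1)\gamma = g\tau$ (well posed since the left side has columns in $\im g$ and $g$ has full column rank).

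The main obstacle will be producing a smooth solution $\xi$ to that last equation. I would first use the freedom $(\gamma,\alpha)\mapsto(\gamma - g\sigma,\alpha + \sigma h)$, which preserves $\tilde f$, to pick a representative with $\tau = 0$, reducing the problem to $\xi\tilde F_1 = \alpha$; this is pointwise solvable wherever $\tilde F_1\ne 0$, and a smooth local selection near $x_p$ can be made by standard arguments. If degeneracies in $\tilde F_1$ pose trouble, an alternative is to invoke Proposition~\ref{Pro:exlsol}, which characterises $\mathbf{Expl}(\Xi)$ by a one-to-one solution correspondence with $\Xi$ that the listed transformations manifestly preserve; this yields $\tilde\Sigma\in\mathbf{Expl}(\Xi)$ without exhibiting $\tilde E_1^{\dagger}$ explicitly.
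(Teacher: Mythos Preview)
Your approach is essentially the paper's. In both directions you exploit the block--upper--triangular structure forced on $\tilde Q Q^{-1}$, and for the converse the paper makes exactly your choice $\tilde Q=\left[\begin{smallmatrix} I_q & E_1\gamma\\ 0 & \eta\end{smallmatrix}\right]Q$. Your ``only if'' argument (with $M,A,B,D$ playing the roles of the paper's $Q',Q_1,Q_2,Q_4$) is a clean restatement of the paper's computation, including the choice $\gamma=E_1^{\dagger}A^{-1}B$.

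The one place you diverge is precisely the step you flag as the obstacle in the converse: producing a smooth right inverse $\tilde E_1^{\dagger}$ with $\tilde E_1^{\dagger}\tilde F_1=\tilde f$. The paper does not actually resolve this; it simply asserts ``we get $\tilde f=\tilde E_1^{\dagger}(\cdots)$ for the above choice of right inverse'' without exhibiting one. Your reduction to the equation $\xi\,\tilde F_1=\alpha$ (after normalising $\tau=0$) is correct, but ``a smooth local selection by standard arguments'' does not cover points where $\tilde F_1$ vanishes, and nothing in the hypotheses forces $\alpha$ to vanish there. Your fallback to Proposition~\ref{Pro:exlsol} does not rescue the argument: that proposition's proof explicitly invokes the present one (so the appeal is circular), and in any case it states only that every element of $\mathbf{Expl}(\Xi)$ enjoys the solution correspondence, not that any system with that correspondence lies in $\mathbf{Expl}(\Xi)$. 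So on this point you are more honest than the paper but not more complete; the gap you identified is one the paper's own proof shares.
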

The proof is given in Section \ref{sec:prf_Thm2}. Since {the} explicitation of a DAE is a class of control systems, we {will propose now an} equivalence relation for  control systems. {An} equivalence of two nonlinear control systems is usually defined by state coordinates transformations and feedback transformations (e.g. see \cite{Isid95,nijmeijer1990nonlinear}), and sometimes output coordinates transformations \cite{marino1994equivalence}.  In the present paper, we define {a more general} system equivalence of two control systems as follows.
\begin{defn}\label{Def:sys-equivalence} (system equivalence)
	Consider two control systems $\Sigma_{n,m,p}=(f,g,h)$ and $\tilde \Sigma_{n,m,p}=(\tilde f,\tilde g,\tilde h)$ defined on $X$ and $\tilde X$, respectively. {The systems} $\Sigma$ and $\tilde \Sigma$ are called system equivalent, or shortly sys-equivalent, denoted by $\Sigma\mathop  \sim \limits^{sys} \tilde \Sigma$, if there exist a diffeomorphism $\psi:X\rightarrow \tilde X$, matrix-valued functions $\alpha:X\rightarrow\mathbb R^{m}$, $\gamma:X\rightarrow\mathbb R^{n\times p}$ and  $\beta:X\rightarrow GL(m,\mathbb R)$, {and $\eta:X\rightarrow GL(p,\mathbb R)$ such that}
	$$
	\tilde f\circ\psi =  \frac{\partial \psi}{\partial x} \left( f+\gamma h+g \alpha\right) , \ \
	\tilde g\circ\psi= \frac{\partial \psi}{\partial x}g\beta,\ \
	\tilde h\circ\psi=\eta h.
	$$
	If $\psi:U\rightarrow\tilde U$ is a local diffeomorphism between neighborhoods $U$ of $x_p$ and $\tilde U$ of $\tilde x_p$, and $\alpha$, $\beta$, $\gamma$, $\eta$ are defined locally on $U$, we will speak about local sys-equivalence.
\end{defn}
\begin{rem}\label{rem:expl}
 	The above defined sys-equivalence of two nonlinear control systems  generalizes the Morse equivalence of two linear control systems (see \cite{morse1973structural,chen2019a}). 
\end{rem}
The following proposition shows that solutions of any DAE are in \red{a} one-to-one correspondence with solutions of its $(Q,v)$-explicitation.
\begin{pro}\label{Pro:exlsol}
Consider a DAE $\Xi_{l,n}=(E,F)$ and let a control system $\Sigma_{n,m,p}=(f,g,h)$ be a $(Q,v)$-explicitation of $\Xi$, i.e., $\Sigma\in \mathbf{Expl}(\Xi)$. Then a $\mathcal C^1$-curve $x(\cdot)$ is a solution of $\Xi$ if and only if there exists $v(\cdot)\in \mathcal C^0$ such that $(x(\cdot),v(\cdot))$ is a solution of $\Sigma$ respecting the output constraints $y=0$, i.e., a solution of (\ref{Eq:proExpl}).
\end{pro}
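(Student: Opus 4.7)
The plan is to argue both implications by unravelling the explicitation construction step by step and exploiting that $Q(x)$ is a pointwise invertible matrix, so multiplying the DAE by it neither creates nor destroys solutions. After the preliminary reduction by $Q$, the DAE splits into the differential part $E_1(x)\dot x=F_1(x)$ and the algebraic part $F_2(x)=0$, and the algebraic part is literally the output constraint $h(x)=F_2(x)=0$ of the explicitation. So everything reduces to showing the equivalence between $E_1(x)\dot x=F_1(x)$ and the existence of a continuous $v(\cdot)$ with $\dot x=f(x)+g(x)v$.

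For the direction $(\Rightarrow)$, suppose $x(\cdot)$ is a $\mathcal C^1$-solution of $\Xi$. Since $Q(x)E(x)\dot x=Q(x)F(x)$, we obtain $E_1(x(t))\dot x(t)=F_1(x(t))$ and $F_2(x(t))=0$. Subtracting $E_1(x)f(x)=E_1 E_1^\dagger F_1=F_1$ gives $E_1(x(t))\bigl(\dot x(t)-f(x(t))\bigr)=0$, so $\dot x(t)-f(x(t))\in\ker E_1(x(t))=\ker E(x(t))=\mathrm{span}\{g_1,\dots,g_m\}(x(t))$. Hence there exists a unique $v(t)\in\mathbb R^m$ such that $\dot x(t)=f(x(t))+g(x(t))v(t)$. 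Uniqueness follows because $g(x)$ has full column rank $m$; moreover, choosing any smooth left inverse $g^\dagger(x)$ of $g(x)$ yields $v(t)=g^\dagger(x(t))\bigl(\dot x(t)-f(x(t))\bigr)$, which is continuous because $x(\cdot)\in\mathcal C^1$ and $f,g^\dagger$ are smooth. Together with $h(x(t))=F_2(x(t))=0$, the pair $(x(\cdot),v(\cdot))$ solves (\ref{Eq:proExpl}).

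For the direction $(\Leftarrow)$, suppose $(x(\cdot),v(\cdot))$ with $v(\cdot)\in\mathcal C^0$ satisfies $\dot x=f(x)+g(x)v$ and $h(x)=0$. Left-multiplying the differential equation by $E_1(x)$ and using $E_1 f=E_1 E_1^\dagger F_1=F_1$ together with $E_1 g=0$ (which holds because $\mathrm{Im}\,g=\ker E_1$), we get $E_1(x)\dot x=F_1(x)$. Combined with $F_2(x)=h(x)=0$, this is exactly $Q(x)E(x)\dot x=Q(x)F(x)$, and invertibility of $Q(x)$ yields $E(x)\dot x=F(x)$, i.e.\ $x(\cdot)$ is a solution of $\Xi$.

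The only subtle point is the regularity of $v$ in the forward direction: we must make sure the $v(t)$ extracted from $\dot x(t)-f(x(t))$ is at least continuous in order to qualify as an admissible control. This is handled by the full column rank of $g$ and the existence of a smooth left inverse, which I expect to be the only place where care is needed; the rest is a direct verification using the identities $E_1 E_1^\dagger=I_q$ and $E_1 g=0$ built into the definition of explicitation.
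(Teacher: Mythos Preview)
Your proof is correct. The overall structure matches the paper's: reduce via the invertible $Q$ to the split system $E_1(x)\dot x=F_1(x)$, $F_2(x)=0$, identify the algebraic constraint with $h(x)=0$, and then show the equivalence between $E_1\dot x=F_1$ and $\dot x=f+gv$ using $E_1E_1^{\dagger}=I_q$ and $\mathrm{Im}\,g=\ker E_1$.

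The one genuine difference is in the $(\Rightarrow)$ direction. The paper first fixes a \emph{particular} right inverse $E_1^\dagger=\left[\begin{smallmatrix}(E_1^1)^{-1}\\0\end{smallmatrix}\right]$ and a particular $g=\left[\begin{smallmatrix}-(E_1^1)^{-1}E_1^2\\ I_m\end{smallmatrix}\right]$ (after permuting coordinates so that $E_1^1$ is invertible), reads off $v(t)=\dot x_2(t)$ explicitly, and then invokes Proposition~\ref{Pro:NonDAEexpl} to transport the result to any other $(Q,v)$-explicitation via the feedback relation $\tilde v=\alpha(x)+\beta(x)v$. You instead work directly with the \emph{given} $f$ and $g$: from $E_1(\dot x-f)=0$ and full column rank of $g$ you extract $v=g^{\dagger}(x)(\dot x-f(x))$ using a smooth left inverse. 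Your route is more economical (it avoids the detour through Proposition~\ref{Pro:NonDAEexpl}) and handles all explicitations at once; the paper's route has the minor advantage of exhibiting $v$ concretely as $\dot x_2$ in adapted coordinates, which makes the $\mathcal C^0$-regularity immediate without discussing left inverses.
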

The proof is given in Section \ref{sec:prf_Thm2}. The following theorem is a fundamental result of the present paper, which shows that sys-equivalence  for explicitation
systems (control systems) {is a true counterpart of} the ex-equivalence for DAEs.
\begin{thm}\label{Thm:ex and sys}
	Consider two DAEs $\Xi_{l,n}=(E,F)$ and $\tilde \Xi_{l,n}=(\tilde E,\tilde F)$. Assume that ${\rm rank\,} {E(x)}$ and ${\rm rank\,}\tilde E(\tilde x)$ are constant around two points $x_p$ and $\tilde x_p$, respectively. Then {for} any two control systems $\Sigma_{n,m,p}=(f,g,h)\in \mathbf{Expl}(\Xi)$ and $\tilde \Sigma_{n,m,p}=(\tilde f,\tilde g,\tilde h)\in \mathbf{Expl}(\tilde \Xi)$, {we have that locally} $\Xi\mathop  \sim \limits^{ex}\tilde \Xi$ if and only if $\Sigma\mathop  \sim \limits^{sys}\tilde \Sigma$.
\end{thm}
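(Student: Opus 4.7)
The theorem is an equivalence, so I will treat the two implications separately. The common idea is that both ex-equivalence of DAEs and sys-equivalence of control systems are ``change of coordinates plus a fibre-wise linear action'', and the two fibre-wise actions are essentially dual to each other via the decomposition $QE=\bigl[\begin{smallmatrix}E_1\\0\end{smallmatrix}\bigr]$ used to build the explicitation. I can already lean on Proposition~\ref{Pro:NonDAEexpl} to reduce the non-uniqueness of the explicitation to the feedback/injection/output-multiplication group, which is exactly the fibre part of sys-equivalence.

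\textbf{Forward direction ($\Xi\overset{ex}{\sim}\tilde\Xi\Rightarrow \Sigma\overset{sys}{\sim}\tilde\Sigma$).} Assume the ex-equivalence is given by $(\psi,Q)$ as in \eqref{Eq:ex-eqnldae}. Take $\Sigma=(f,g,h)\in\mathbf{Expl}(\Xi)$ associated with data $Q_\Xi, E_1^\dagger, g$. I will transport $\Sigma$ through $\psi$ to produce a specific control system $\bar\Sigma$ on $\tilde X$ (with vector field $(\partial\psi/\partial x)f\circ\psi^{-1}$, input matrix $(\partial\psi/\partial x)g\circ\psi^{-1}$ and output $h\circ\psi^{-1}$) and show that $\bar\Sigma\in\mathbf{Expl}(\tilde\Xi)$ by using the identities in \eqref{Eq:ex-eqnldae} to exhibit explicit choices of $\tilde Q$, $\tilde E_1^\dagger$ and $\tilde g$. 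Then the trivial sys-equivalence $\psi$ links $\Sigma$ to $\bar\Sigma$, and by Proposition~\ref{Pro:NonDAEexpl} the given $\tilde\Sigma\in\mathbf{Expl}(\tilde\Xi)$ is related to $\bar\Sigma$ by $(\alpha,\beta,\gamma,\eta)$ as in \eqref{Eq:map}, i.e., by a sys-equivalence with $\psi=\operatorname{id}$. Composing these two steps gives the required sys-equivalence.

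\textbf{Reverse direction ($\Sigma\overset{sys}{\sim}\tilde\Sigma\Rightarrow \Xi\overset{ex}{\sim}\tilde\Xi$).} Assume sys-equivalence via $(\psi,\alpha,\beta,\gamma,\eta)$. Write the decompositions $Q E=\bigl[\begin{smallmatrix}E_1\\0\end{smallmatrix}\bigr]$, $QF=\bigl[\begin{smallmatrix}F_1\\F_2\end{smallmatrix}\bigr]$ and analogously with tildes. The key geometric observation is that $\ker\tilde E\circ\psi=(\partial\psi/\partial x)\ker E$: this follows from $\tilde g\circ\psi=(\partial\psi/\partial x)g\beta$ together with $\operatorname{Im}g=\ker E$ and $\operatorname{Im}\tilde g=\ker\tilde E$. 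Hence $E_1$ and $(\tilde E_1\circ\psi)(\partial\psi/\partial x)$ are two $q\times n$ matrices of full row rank with the same kernel, so there is a unique smooth invertible $A:X\to GL(q,\mathbb R)$ with $(\tilde E_1\circ\psi)(\partial\psi/\partial x)=A\,E_1$. I will then define the block matrix
\[
M(x)=\begin{bmatrix}A(x) & A(x)E_1(x)\gamma(x) \\ 0 & \eta(x)\end{bmatrix}, \qquad Q_0(x):=\tilde Q^{-1}(\psi(x))\,M(x)\,Q(x),
\]
which is smooth and invertible since $A$ and $\eta$ are. The block-triangular structure with $0$ in the $(2,1)$ position immediately yields $Q_0 E=(\tilde E\circ\psi)(\partial\psi/\partial x)$. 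For the second identity $Q_0 F=\tilde F\circ\psi$ the bottom block reduces, via $\tilde h\circ\psi=\eta h$, to $\eta F_2=\tilde F_2\circ\psi$, which holds; and the top block reduces, using $E_1 g=0$ and $\tilde E_1\tilde f=\tilde F_1$ together with $\tilde f\circ\psi=(\partial\psi/\partial x)(f+\gamma h+g\alpha)$, to $AF_1+AE_1\gamma F_2=\tilde F_1\circ\psi$, which is exactly what the choice of $M_{12}=AE_1\gamma$ gives.

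\textbf{Main obstacle.} Both the non-uniqueness of the explicitation and the non-uniqueness of $Q_0$ in the definition of ex-equivalence need to be navigated carefully; the heart of the argument is producing the smooth invertible matrix $A$ relating $E_1$ and $(\tilde E_1\circ\psi)(\partial\psi/\partial x)$ and then packaging it with $\eta$, $\gamma$, $E_1$ into the block matrix $M$ so that the two identities in \eqref{Eq:ex-eqnldae} are satisfied simultaneously. Once $A$ is identified and the computation with $E_1 g=0$ is made, everything collapses algebraically; the only conceptual content is that the group acting fibrewise on explicitations (feedback, injection, output multiplication) is exactly the group that $Q_0$ parametrizes on the DAE side.
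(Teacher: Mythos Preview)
Your proposal is correct and follows essentially the same route as the paper. The reverse direction is identical: the paper also observes that $\ker(\tilde E\circ\psi)=(\partial\psi/\partial x)\ker E$, extracts an invertible $Q_1$ (your $A$) with $\tilde E_1\circ\psi=Q_1E_1(\partial\psi/\partial x)^{-1}$, premultiplies the $\tilde f$-relation by $\tilde E_1\circ\psi$ to kill the $g\alpha$ term via $E_1g=0$, and packages everything into the block-triangular matrix $\bigl[\begin{smallmatrix}Q_1 & Q_1E_1\gamma\\ 0 & \eta\end{smallmatrix}\bigr]$ (your $M$). Your forward direction is organized slightly more modularly---you push $\Sigma$ forward by $\psi$ to a specific $\bar\Sigma\in\mathbf{Expl}(\tilde\Xi)$ and then invoke Proposition~\ref{Pro:NonDAEexpl} to reach $\tilde\Sigma$---whereas the paper starts from the block decomposition of the ex-equivalence $Q$ between $\Xi'$ and $\tilde\Xi'$ and reads off $\alpha,\beta,\gamma,\eta$ directly; the content is the same.
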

The proof is given in Section \ref{sec:prf_Thm2}. In order to show {how} the explicitation can be useful in the DAEs theory, we  discuss below how  the analysis of DAEs of Sections \ref{section:2} and \ref{subsection:3.1} {is} related to the {notion of} zero dynamics of nonlinear control theory. For a nonlinear control system $\Sigma_{n,m,p}=(f,g,h)$ and a nominal point $x_p$, assume $h(x_p)=0$. Recall its zero dynamics algorithm \cite{Isid95,nijmeijer1990nonlinear}.

Step 1: set $N_1=h^{-1}(0)$. Step $k$ ($k>1$): assume for some neighborhood $U_{k-1}\subseteq X$ of $x_p$, $N_{k-1}^c=N_{k-1}\cap U_{k-1}$ is a smooth   embedded and connected submanifold     such that  $x_p\in N_{k-1}^c$.  Set 
\begin{align}\label{Eq:Nseq}
{N_{k}} = \left\{ {x \in N^c_{k-1}:f(x) \in {T_x N^c_{k-1}}}+{\rm span}\{g_1(x),\ldots,g_m(x)\} \right\}.
\end{align}
  \cyan{For a control system $\Sigma=(f,g,h)$, a smooth embedded connected submanifold $N$ containing a point $x_p$ is called \emph{output zeroing} if (i) $h(x)=0$, $\forall x\in N$; (ii) $N$ is locally controlled invariant at $x_p$ (i.e., $\exists\, u: N\to \mathbb R^m$ and a neighborhood $U_p$ of $x_p$ such that $f(x)-g(x)u(x)\in T_xN, \, \forall x\in N_p\cap U_p$).  An output zeroing submanifold $N^*$ is locally maximal if for some neighborhood $U$ of $x_p$, any other output zeroing submanifold $N'$ satisfies $N'\cap U\subseteq N^*\cap U$.}

\begin{rem}\label{rem:zerodynamic}
	(i)	It is shown in \cite{Isid95} that $N_k$ is invariant under feedback transformations. Then  \cyan{consider} a control system $\tilde \Sigma=(\tilde f,\tilde g,\tilde h)$,   given by {applying} a \emph{generalized output injection} and an \emph{output multiplication} {to} $\Sigma$, i.e., $\tilde f=f+\gamma h$, $\tilde g=g$, $\tilde h=\eta h$, where $\gamma:X\rightarrow\mathbb R^{n\times p}$ and $\eta:X\rightarrow GL(p,\mathbb R)$. By $\tilde N_0=\tilde h^{-1}(0)=h^{-1}(0)$ (since $\eta(x)$ is invertible) and for
	\begin{align*}
	{\tilde N_{k}} &= \left\{ {x \in \tilde N^c_{k-1}:f(x)+\gamma h (x) \in {T_x \tilde N^c_{k-1}}}+{\rm span}\{\tilde g_1,\ldots,\tilde g_m\} (x)\right\} \\&=\left\{ {x \in \tilde N^c_{k-1}:f(x)+0 \in {T_x \tilde N^c_{k-1}}}+{\rm span}\{g_1,\ldots,g_m\}(x)\right\},
	\end{align*}
	we have $\tilde N_k=N_k$ for $k \ge 0$, which means {that} $N_k$ {of} the zero dynamics algorithm is invariant under \emph{generalized output injections} and \emph{output multiplications}. 
	
	(ii) The sequence of submanifolds $N^c_k$ of the zero dynamics algorithm is well-defined for the class $\mathbf{Expl}(\Xi)$, i.e., does not depend on the choice of $\Sigma\in \mathbf{Expl}(\Xi)$. Since by Proposition \ref{Pro:NonDAEexpl} any two systems  $\Sigma,\Sigma'\in\mathbf{Expl}(\Xi)$ are equivalent via a $v$-feedback, a generalized output injection, and an output multiplication, then by the argument in item (i) above, we have $\tilde N_k=N_k$.
	
\end{rem}
\begin{pro}\label{Pro:output zeroing vs invariant}
	Consider a DAE $\Xi_{l,n}=(E,F)$ {satisfying} ${\rm rank\,} E(x)=q=const.$ around a  point $x_p$ and  a control system $\Sigma=(f,g,h)\in \mathbf{Expl}(\Xi)$. \red{Denote $\mathcal G(x)={\rm span}\{ g_1,\ldots,g_m\}(x)$, where $g_i$, $1\le i\le m$, are the columns of $g$.} The following conditions
	\begin{itemize}
		\item [(A1)]  For $\Xi$, the submanifold $M^c_k$ of the geometric reduction method of Section \ref{section:2} is   smooth, embedded, connected  and $\dim\, E(x)T_xM^c_{k^*}=const.$  for all $x\in M^c_{k^*}$ around $x_p$,
		\item [(A2)] For $\Sigma$, the submanifold $N^c_k$ of the zero dynamics algorithm above is   smooth,   embedded, connected  and   $\dim \red{\mathcal G(x)}\cap T_{x}N^c_{k^*}=const.$ for all $x\in N^c_{k^*} $ around $x_p$ (see Proposition 6.1.1 in \cite{Isid95}), 
	\end{itemize} 
\cyan{are equivalent for each $k\ge 1$.} Assume that either (A1) or (A2) holds, then  the maximal invariant submanifold  $M^*=M^c_{k^*}$ of $\Xi$ coincides with the maximal output zeroing submanifold $N^*=N^c_{k^*}$ of  $\Sigma$. {Moreover}, $\Xi$ is internally regular (around $x_p$) if and only if $\mathcal G(x_p)\cap T_{x_p}N^*=0$ (equation (6.4) of \cite{Isid95}). 
\end{pro}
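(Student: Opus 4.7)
The proof will proceed by inductively identifying the two submanifold sequences and then translating the rank condition via the definition of the explicitation.

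First I would fix $\Sigma=(f,g,h)\in\mathbf{Expl}(\Xi)$ and, using the decomposition $QE=\bigl[\begin{smallmatrix}E_1\\0\end{smallmatrix}\bigr]$, $QF=\bigl[\begin{smallmatrix}F_1\\F_2\end{smallmatrix}\bigr]$, record three facts. (i) Since $Q(x)$ is invertible, $E(x)T_xM=\{0\}\times E_1(x)T_xM$ after the $Q$-change, so $F(x)\in E(x)T_xM$ is equivalent to $F_2(x)=0$ together with $F_1(x)\in E_1(x)T_xM$. (ii) Because $E_1E_1^{\dagger}=I_q$ and $f=E_1^{\dagger}F_1$, one has $E_1 f=F_1$, and because $\ker E(x)=\ker E_1(x)=\operatorname{Im}g(x)=\mathcal{G}(x)$, the condition $F_1(x)\in E_1(x)T_xM$ is equivalent to the existence of $w\in T_xM$ with $E_1(x)(w-f(x))=0$, i.e., $f(x)\in T_xM+\mathcal{G}(x)$. (iii) $N_1=h^{-1}(0)=F_2^{-1}(0)$.

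Next I would prove by induction on $k\ge 1$ that $M^c_k=N^c_k$ (locally around $x_p$) and that the smoothness/embedding/connectedness hypotheses of (A1) at step $k$ are equivalent to those of (A2) at step $k$. For the base case $k=1$, fact (i) with $M=X$ gives $M_1=F_2^{-1}(0)=N_1$, so $M^c_1=N^c_1$. For the inductive step, assuming $M^c_{k-1}=N^c_{k-1}$ (and hence $F_2\equiv 0$ on it, because $M^c_{k-1}\subseteq M^c_1=N_1$), facts (i) and (ii) show that for $x\in M^c_{k-1}$,
\[
F(x)\in E(x)T_xM^c_{k-1}\;\Longleftrightarrow\;f(x)\in T_xM^c_{k-1}+\mathcal{G}(x),
\]
so $M_k=N_k$, and consequently $M^c_k=N^c_k$ under either construction.

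For the equivalence of the constant rank assumptions in (A1) and (A2) at step $k^*$, I would invoke the rank-nullity identity applied to the linear map $E_1(x):T_xM^c_{k^*}\to\mathbb{R}^q$:
\[
\dim T_xM^c_{k^*}=\dim E_1(x)T_xM^c_{k^*}+\dim\bigl(\ker E_1(x)\cap T_xM^c_{k^*}\bigr).
\]
Because $E(x)T_xM^c_{k^*}=\{0\}\times E_1(x)T_xM^c_{k^*}$ and $\mathcal{G}(x)\cap T_xM^c_{k^*}=\ker E_1(x)\cap T_xM^c_{k^*}$, and because $\dim T_xM^c_{k^*}$ is constant on the smooth embedded submanifold, constancy of either summand is equivalent to constancy of the other. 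Thus (A1)$\Leftrightarrow$(A2), and in either case $M^*=M^c_{k^*}=N^c_{k^*}=N^*$.

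Finally I would derive the internal regularity characterization. By Theorem~\ref{Thm:1}, $\Xi$ is internally regular at $x_p$ iff $\dim M^*=\dim E(x)T_xM^*$ for all $x\in M^*$ near $x_p$. The rank-nullity identity above, evaluated on $M^*=N^*$, turns this into $\dim\bigl(\mathcal{G}(x)\cap T_xN^*\bigr)=0$ near $x_p$; by the constant rank assumption this is equivalent to the pointwise statement $\mathcal{G}(x_p)\cap T_{x_p}N^*=\{0\}$, which is the announced condition. The only real subtlety I anticipate is ensuring that the constant-rank/embeddedness inductive hypotheses transfer cleanly between the two algorithms at intermediate steps (not only at $k^*$); this is handled precisely by the rank-nullity observation applied at each step, using that $\dim M^c_{k-1}$ is locally constant whenever $M^c_{k-1}$ is a smooth embedded submanifold.
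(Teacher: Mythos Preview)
Your proposal is correct and follows essentially the same route as the paper's proof: identify $M^c_k=N^c_k$ inductively from the explicitation identities (starting from $M^c_1=F_2^{-1}(0)=h^{-1}(0)=N^c_1$ and using $F(x)\in E(x)T_xM^c_{k-1}\Leftrightarrow f(x)\in T_xN^c_{k-1}+\mathcal G(x)$), and then invoke Theorem~\ref{Thm:1} together with $\ker E=\mathcal G$ for the internal regularity characterization. Your explicit rank--nullity argument connecting $\dim E(x)T_xM^c_{k^*}$ and $\dim(\mathcal G(x)\cap T_xN^c_{k^*})$ spells out what the paper abbreviates as ``we can easily deduce the other one''.
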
 
The proof is given in Section \ref{sec:prf_Thm2}.
\begin{rem}
	By Proposition \ref{Pro:output zeroing vs invariant}, if there exists a unique $u=u(x)$ {that renders} $N^*$ output zeroing and locally maximal control invariant {for} a control system $\Sigma\in\mathbf{Expl}(\Xi) $, then the original DAE $\Xi$ is internally regular. Since the zero dynamics do  not depend on the choice of  explicitation, the internal regularity of $\Xi$  corresponds to {the fact that} the zero output {constraint} $y(t)=0$ of any control system $\Sigma\in \mathbf{Expl}(\Xi)$ can be {achieved} by a unique control $u(t)$ or, equivalently, the zero dynamics of $\Sigma$ \red{is a unique vector field on $N^*$.}
\end{rem}
The explicitation can be also used to characterize solutions of  DAEs which are not necessarily internally regular, that is, the restricted DAE $\Xi^*$, given by (\ref{Eq:Xi*}), has non-unique maximal solutions (recall that $\Xi^*$ has isomorphic solutions with the original DAE $\Xi$ by Proposition \ref{Pro:ismsol}). We now apply the explicitation method to $\Xi^*$ to have the following result.
\begin{pro}
Consider a DAE $\Xi=(E,F)$ and fix a point $x_p\in X$. Assume that the locally maximal invariant submanifold $M^*$ around $x_p$ exists and can be constructed  via the algorithm of Appendix. Then the reduction of local $M^*$-restriction of $\Xi$, denoted by $\Xi|^{red}_{M^*}$, coincides with the DAE $\Xi^*: E^*(z^*)\dot z^*=F^*(z^*)$ \cyan{of} Proposition \ref{Pro:ismsol} with $E^*(z^*)$ being of full row rank $r^*$.  We have
\begin{itemize}
	\item [(i)] A curve $z^*: I\to M^*$ is a solution of $\Xi^*$ if and only if it is an integral curve of the affine distribution $\mathcal A(z^*)=f^*(z^*)+\ker E(z^*)$, i.e., $\dot z^*(\cdot)\in \mathcal A(z^*(\cdot))$, where $f^*=(E^*)^{\dagger}F^*$.
	\item [(ii) ] $\mathcal C^1$-solutions of $\Xi^*$ are in one-to-one correspondence with those of any $(Q,v)$-explicitation $\Sigma^*\in\mathbf{Expl}(\Xi^*)$ of the form  
	$$
	\Sigma^*: z^*=f^*(z^*)+g^*(z^*)v,
	$$
\cyan{which is a control system without outputs,}	where $\im g^*=\ker E$, $g^*=(g_1^*,\ldots,g^*_{m^*})$ and $v=(v_1,\ldots,v_{m^*})$, \cyan{and $v(t)\in \mathcal C^0$}.
\item[(iii)] If $\ker E=\ker E^*$ is involutive, then $\Xi^*$ is ex-equivalent   (that is, the original DAE $\Xi$ is in-equivalent) to a semi-explicit DAE of the form
$$
\dot z^*_1=F_1(z^*_1,z^*_2),
$$
which can be seen as a control system that is not affine with respect to the control $z^*_2$.
\end{itemize}
\end{pro}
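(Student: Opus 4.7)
My plan is to first confirm the preliminary identification. By Proposition~\ref{Pro:restriction} applied to $M^*$, any reduction $\Xi|^{red}_{M^*}$ is a DAE of size $(r^*,n^*)$ on $M^*$ whose matrix has full row rank $r^*$, and by Proposition~\ref{Pro:ismsol} the algorithm of the Appendix produces a DAE $\Xi^*=(E^*,F^*)$ with the same dimensions and the same full row rank property. Up to the (inherent) non-uniqueness of the reduction, both DAEs have the same solution set on $M^*$, which justifies the identification $\Xi|^{red}_{M^*}=\Xi^*$ used throughout the three items below.

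For item~(i), I would use the full row rank of $E^*$ to obtain a smooth right inverse $(E^*)^{\dagger}$ satisfying $E^*(E^*)^{\dagger}=I_{r^*}$, so that $f^*=(E^*)^{\dagger}F^*$ obeys $E^*f^*=F^*$. A $\mathcal C^1$-curve $z^*(\cdot)$ solves $\Xi^*$ if and only if $E^*(z^*)(\dot z^*-f^*(z^*))=0$, i.e., if and only if $\dot z^*-f^*\in \ker E^*(z^*)$, which is precisely the differential inclusion $\dot z^*(\cdot)\in \mathcal A(z^*(\cdot))$.

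For item~(ii), I would apply the explicitation of Definition~\ref{Def:expl} directly to $\Xi^*$. Since $E^*$ already has full row rank, one may take $Q=I_{r^*}$, so no $F_2$-block remains and no output constraint appears; choosing independent vector fields $g_1^*,\ldots,g_{m^*}^*$ spanning $\ker E^*$ (of constant rank $m^*=n^*-r^*$) yields $\Sigma^*:\dot z^*=f^*+g^*v$ without output. The one-to-one correspondence of $\mathcal C^1$-solutions of $\Xi^*$ with $(z^*,v)$-solutions where $v\in\mathcal C^0$ is then Proposition~\ref{Pro:exlsol} specialized to this setting; continuity of the recovered $v(t)$ follows because the columns of $g^*$ are pointwise independent, so $v(t)$ can be retrieved from the continuous quantity $\dot z^*-f^*$ via a smooth left inverse of $g^*$.

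The heart of the proof is item~(iii), and the main technical ingredient is Frobenius plus careful bookkeeping of the block sizes. The involutivity hypothesis furnishes, via Frobenius, local coordinates $(z^*_1,z^*_2)$ on $M^*$ near $x_p$ with $z^*_1\in\mathbb R^{r^*}$ and $z^*_2\in\mathbb R^{m^*}$ such that $\ker E^*=\mathrm{span}\{\partial/\partial z^*_2\}$. In these coordinates $E^*$ must take the block form $[\tilde E_1^*(z^*)\ \ 0]$, and the full row rank of $E^*$ forces the $r^*\times r^*$ block $\tilde E_1^*(z^*)$ to be invertible near $x_p$. Multiplying $\tilde E_1^*(z^*)\dot z^*_1=F^*(z^*)$ on the left by $Q(z^*)=(\tilde E_1^*(z^*))^{-1}\in GL(r^*,\mathbb R)$ yields the semi-explicit form $\dot z^*_1=F_1(z^*_1,z^*_2)$; ex-equivalence then holds by Definition~\ref{Def:ex-equivalence}, and in-equivalence of $\Xi$ to the same system follows from Definition~\ref{Def:in-equivalence}. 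Once the coordinate straightening and the block dimensions are in place, the remaining algebra is routine, and the observation that $z^*_2$ enters unconstrained and through $F_1$ in a generally non-affine way is immediate from the construction.
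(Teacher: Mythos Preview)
Your proposal is correct and follows the same approach as the paper: item~(i) is the direct differential-inclusion argument the paper calls ``clear,'' item~(ii) is Proposition~\ref{Pro:exlsol} applied to $\Xi^*$, and for item~(iii) you reproduce the Frobenius argument underlying the implication (i)$\Rightarrow$(ii) of Theorem~\ref{Thm:EXWI}, which the paper simply cites. The only cosmetic difference is that you carry out the coordinate straightening explicitly rather than invoking Theorem~\ref{Thm:EXWI} by name.
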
 
\begin{proof}
	We omit the proof since item (i) is clear, and items (ii) and (iii) can be easily deduced by applying, respectively, the results of Proposition \ref{Pro:exlsol}   and that of Theorem \ref{Thm:EXWI} (see below) to $\Xi^*$.
\end{proof}

\section{Driving variable reducing and semi-explicit  DAEs}\label{subsection:3.3}
Now we {will} show by an example that sometimes we can reduce some of driving variables of a $(Q,v)$-explicitation.
\begin{exa}
	Consider a DAE $\Xi=(E,F)$, given by
		\begin{align*}
	\left[ {\begin{smallmatrix}
		{\sin {x_3}}&{ - \cos {x_3}}&0\\
		0&0&0
		\end{smallmatrix}} \right]\left[ {\begin{smallmatrix}
		{{{\dot x}_1}}\\
		{{{\dot x}_2}}\\
		{{{\dot x}_3}}
		\end{smallmatrix}} \right] = \left[ {\begin{smallmatrix}
		{F_1(x)}\\
		{x_1^2 + x_2^2 - 1}
		\end{smallmatrix}} \right],
	\end{align*}
where $F_1:X\rightarrow\mathbb R$ is smooth. By ${\rm rank\,} E(x)=1$, the explicitation class $\mathbf{Expl}(\Xi)$ is not empty. A control system $\Sigma\in \mathbf{Expl}(\Xi)$ is:
\begin{align*}
\red{\Sigma:}	\left\lbrace \begin{array}{c@{\ }l}
	\left[ {\begin{smallmatrix}
		{{{\dot x}_1}}\\
		{{{\dot x}_2}}\\
		{{{\dot x}_3}}
		\end{smallmatrix}} \right] &= \left[ {\begin{smallmatrix}
		{\sin {x_3}}\\
		{ - \cos {x_3}}\\
		0
		\end{smallmatrix}} \right]F_1(x) + \left[ {\begin{smallmatrix}
		0&{\cos {x_3}}\\
		0&{ - \sin {x_3}}\\
		1&0
		\end{smallmatrix}} \right]\left[ {\begin{smallmatrix}
		{{v_1}}\\
		{{v_2}}
		\end{smallmatrix}} \right],\\
	y &=  x_1^2 + x_2^2 - 1 ,
	\end{array} \right.
	\end{align*}
	where ${\left[ {\begin{smallmatrix}
			{\sin {x_3}}&{ - \cos {x_3}}&0
			\end{smallmatrix}} \right]^T}$ is  a right inverse of $E_1(x)=\left[ {\begin{smallmatrix}
		{\sin {x_3}}&{ - \cos {x_3}}&0
		\end{smallmatrix}} \right]$. 
	Now consider the last equation in the dynamics of $\Sigma$, which is $\dot x_3=v_1$. Observe that $v_1$ {acts on $\dot x_3$ only}, which implies that $v_1$ is decoupled from the other part of the dynamics. Thus, we may {get rid of} $v_1$ and regard $x_3$ as a new control. {Thus} the dynamics of $\Sigma$ become:
$$
		\left[ {\begin{smallmatrix}
			{{{\dot x}_1}}\\
			{{{\dot x}_2}}
			\end{smallmatrix}} \right] = \left[ {\begin{smallmatrix}
			{\sin {x_3}}F_1(x)\\
			{ - \cos {x_3}}F_1(x)
			\end{smallmatrix}} \right] + \left[ {\begin{smallmatrix}
			{\cos {x_3}}\\
			{ - \sin {x_3}}
			\end{smallmatrix}} \right]{v_2},
		$$
	where $x_1$ and $x_2$ are new states, $x_3$ and $v_2$ are the new control inputs.  \red{By rectifying the vector field $g_2=\cos x_3\frac{\partial }{\partial x_1}-\sin x_3\frac{\partial }{\partial x_2}$, we can reduce $v_2$ in a similar way.} We are, however, not able to reduce $v_1$ and $v_2$ simultaneously.
\end{exa}
Before giving the main result of this subsection, we formally define what we mean by ``reducing''   variables of a control system $\Sigma$.
\begin{defn}[driving variable \red{reduction}]\label{Def:reduce}
	For a control system $\Sigma_{n,m,p}=(f,g,h)$, let {$\mathcal G^{red}$} be {an} involutive sub-distribution of {constant rank $k$ of} the distribution $\mathcal G={\rm span}\left\lbrace g_1,\ldots,g_m \right\rbrace $. There exists a feedback transformation and a coordinates change such that, \red{locally}, {$\mathcal G^{red}={\rm span}\left\lbrace \frac{\partial}{\partial x^1_2},\ldots,\frac{\partial}{\partial x^k_2}\right\rbrace$ and} $\Sigma$ takes the form 
	\begin{align*}
	\left\lbrace 	\begin{array}{c@{\ }l}
	{{\dot x_1}} &= {f}_1\left( {{x_1},{x_2}} \right)+\sum\limits_{i=1}^{m-k} {{g}_1^i\left( {{x_1},{x_2}} \right)}v_1^i, \\
	\dot x_2&=v_2,\\
	y &= {h}\left( {x_1},{x_2} \right),
	\end{array}\right.	 
	\end{align*}
	where $v_2=(v^1_2,\ldots,v^{k}_2)$. We will say that $\Sigma$ can be $\mathcal G^{red}$-reduced to the following control system
	$$
	\left\lbrace 	\begin{array}{c@{\ }l}
	{{\dot x_1}} &= {f}_1\left( {{x_1},{x_2}} \right)+\sum\limits_{i=1}^{m-k} {{g}_1^i\left( {{x_1},{x_2}} \right)}v_1^i, \\
	y &= {h}\left( {{x_1},{x_2}} \right),
	\end{array}\right.   	
	$$
	where $x_2$ is a new control \red{and the reduced state $x_1$ is of dimension $n-k$}. We say that $\Sigma$ can be fully reduced if $\mathcal G^{red}=\mathcal G$.
\end{defn}
Now we connect   reducing of control systems with semi-explicit DAEs.
\begin{thm}\label{Thm:EXWI}
	For a DAE $\Xi_{l,n}=(E,F)$, the following   statements are  equivalent around a point $x_p\in X$:
	\begin{itemize}
		\item[(i)] ${\rm rank\, }E(x)=const.$   and the distribution $ \ker E(x)$ is involutive.
		\item [(ii)] $\Xi$ is locally ex-equivalent to a  semi-explicit DAE $\Xi^{SE}:	\left\lbrace {\begin{array}{c@{}l}
			\dot x_1&=F_1(x_1,x_2)\\
			0&=	F_2(x_1,x_2)
			\end{array}} \right.$.
		\item [(iii)] Any control system $\Sigma=(f,g,h)\in \mathbf{Expl}(\Xi)$  can be fully reduced.
	\end{itemize}
\end{thm}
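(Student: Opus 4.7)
The plan is to establish the cycle (i) $\Rightarrow$ (ii) $\Rightarrow$ (i) and then to obtain (i) $\Leftrightarrow$ (iii) almost tautologically from the identity $\im g = \ker E$ built into Definition~\ref{Def:expl}. The only real technical ingredient is the Frobenius theorem, invoked in the implication (i) $\Rightarrow$ (ii).

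For (i) $\Rightarrow$ (ii), since $\ker E$ is involutive of constant corank $q$, the Frobenius theorem supplies local coordinates $(x_1,x_2)\in \mathbb{R}^{q}\times \mathbb{R}^{n-q}$ around $x_p$ in which $\ker E = {\rm span}\{\partial/\partial x_2^1,\ldots,\partial/\partial x_2^{n-q}\}$. In such coordinates $E$ must have the block form $[E_1(x),\,0]$ with $E_1:X\to \mathbb{R}^{l\times q}$ of constant full column rank $q$. A smooth $Q(x)\in GL(l,\mathbb{R})$ produced by Gaussian elimination then achieves $Q(x)E_1(x) = \left[\begin{smallmatrix} I_q\\ 0\end{smallmatrix}\right]$, so ex-equivalence via $Q$ brings $\Xi$ to the semi-explicit form $\dot x_1 = F_1(x_1,x_2)$, $0 = F_2(x_1,x_2)$ with $\left[\begin{smallmatrix} F_1\\ F_2 \end{smallmatrix}\right] = QF$.

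For (ii) $\Rightarrow$ (i), the canonical semi-explicit model has $E^{SE} = \left[\begin{smallmatrix} I_q & 0\\ 0 & 0\end{smallmatrix}\right]$, so $\rk E^{SE}\equiv q$ and $\ker E^{SE} = {\rm span}\{\partial/\partial x_2\}$ is manifestly involutive. Ex-equivalence preserves the rank of $E$ (since $\tilde E = QE(\partial\psi/\partial x)^{-1}$ with $Q$ and $\partial\psi/\partial x$ invertible) and the involutivity of $\ker E$ (as observed at the end of Remark~\ref{Obs:1}), so (i) transfers back from $\Xi^{SE}$ to $\Xi$.

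Finally, (i) $\Leftrightarrow$ (iii) is immediate once the definitions are unpacked. By Definition~\ref{Def:expl} every $\Sigma = (f,g,h)\in\mathbf{Expl}(\Xi)$ satisfies $\im g(x) = \ker E(x)$, so its input distribution $\mathcal G = {\rm span}\{g_1,\ldots,g_m\}$ coincides with $\ker E$. Definition~\ref{Def:reduce} says that $\Sigma$ can be fully reduced precisely when $\mathcal G^{red}=\mathcal G$ itself is involutive of constant rank $m$; since $\mathcal G = \ker E$, this is exactly (i). Because the $g_i$ may vary among different explicitations while ${\rm span}\{g_1,\ldots,g_m\}=\ker E$ does not, ``any'' $\Sigma\in\mathbf{Expl}(\Xi)$ and ``some'' $\Sigma\in\mathbf{Expl}(\Xi)$ are interchangeable here. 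I expect the Frobenius straightening in the first implication to be the only step that requires care, and even there the argument is routine.
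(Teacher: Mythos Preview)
Your proof is correct. The implication $(i)\Rightarrow(ii)$ via Frobenius is essentially identical to the paper's argument, and your $(i)\Leftrightarrow(iii)$ coincides with the paper's $(iii)\Rightarrow(i)$ plus its obvious converse.

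The one structural difference is in how you close the cycle with (ii). The paper runs $(i)\Rightarrow(ii)\Rightarrow(iii)\Rightarrow(i)$, and for $(ii)\Rightarrow(iii)$ it invokes Theorem~\ref{Thm:ex and sys} to pass from ex-equivalence $\Xi\sim\Xi^{SE}$ to sys-equivalence $\Sigma\sim\Sigma'$ with $\Sigma'\in\mathbf{Expl}(\Xi^{SE})$, and then exhibits the full reduction of $\Sigma'$ explicitly. You instead prove $(ii)\Rightarrow(i)$ directly, using only that ex-equivalence preserves $\rk E$ and the involutivity of $\ker E$ (the latter noted in Remark~\ref{Obs:1}). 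Your route is lighter: it avoids appealing to the ex/sys correspondence of Theorem~\ref{Thm:ex and sys} and keeps the argument entirely at the level of the DAE data $E$. The paper's route, on the other hand, illustrates concretely what the fully reduced explicitation of a semi-explicit DAE looks like, which ties in with the surrounding discussion of $\Sigma^w$ in Remark~\ref{rem:EXPL1}.
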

The proof is given in Section \ref{sec:prf_Thm3}.
\begin{rem}\label{rem:EXPL1}
	(i) Observe that if $\Xi$ is ex-equivalent to  $\Xi^{SE}$, then by rewriting $x_2=\red{w}$ and choosing the output $y=F_2(x_1,w)$, we get the following    control system $\Sigma^w$ with an input $w$,
		\begin{align*} 
\Sigma^{w}:	\left\{ \begin{array}{c@{\ }l}
	{{\dot x}_1} &= {F_1}\left( x_1,w \right),\\
	y &= F_2\left(  x_1,w \right).
	\end{array} \right.
	\end{align*}
The above system $\Sigma^{w}$  {has} the same number of variables as $\Xi$. Thus $\Sigma^w$ is  an \emph{explicitation without driving variables} of $\Xi$. So there are two kinds of explicitation  for nonlinear DAEs, namely, explicitation with, or without, driving variables \cyan{(the latter is possible if and only if $\ker E$ is involutive)}.

	(ii) A linear DAE $\Delta=(E,H)$, given by (\ref{Eq:DAE2}), has always  two kinds of explicitations, since the rank of $E$ is always constant and the distribution $\mathcal G=\ker E$ is {always} involutive. The relations and differences of the two explicitations for linear DAEs are discussed in \cite{chen2021from} and Chapter 3 of \cite{chen2019geometric} (note that the explicitation without driving variables for linear DAEs is called the $(Q,P)$-explicitation there).
\end{rem}
\section{Nonlinear generalizations of the Weierstrass form}\label{subsection:3.4}
In this subsection, we will use the explicitation (with driving variables) procedure to {transform an internally regular DAE $\Xi_{l,n}=(E,F)$ with $l=n$, into  normal forms under the external equivalence}. A linear regular DAE  is always ex-equivalent (via linear transformations) to the Weierstrass form \textbf{WF} \cite{Weie68}, given by
\begin{align}\label{Eq:WF}
\textbf{WF}:\left[ {\begin{matrix}
	N&0\\
	0&I
	\end{matrix}} \right]\left[ {\begin{matrix}
	\dot z\\
	\dot z^*
	\end{matrix}} \right] = \left[ {\begin{matrix}
	I&0\\
	0&A
	\end{matrix}} \right]\left[ {\begin{matrix}
	z\\
	z^*
	\end{matrix}} \right],
\end{align}
where $N={\rm diag}\left( N_{1},\ldots,N_{m}\right) $, \cyan{with} $N_i$, $i=1,\ldots,m$  \cyan{being}  nilpotent matrices of index $\rho_i$, i.e., $N_i^{j}\ne 0$ for all $j=1,\ldots,\rho_i-1$ and $N_i^{\rho_i}= 0$.  The following theorem generalizes \red{that}  result and shows that {any internally regular nonlinear DAE (under the assumption that some ranks  are constant) is always ex-equivalent to a nonlinear Weierstrass form \textbf{NWF1} (see (\ref{Eq:NF1}) below)}. \red{Note that $\bar \phi_k$ in the algorithm of  Appendix, defined on $W_k\subseteq M^c_k$, can be considered as maps on $U_0\subseteq X$ by taking $\bar \Phi_k=\bar \varphi_k\circ \varphi_{k-1}\circ \cdots\circ \varphi_1(x)$. Then for $k\ge 1$ , set $H_k=\left[ \begin{matrix}
\bar \Phi_1&\ldots& \bar \Phi_k
	\end{matrix} \right] ^T$  and $H_0$ is empty.  \textbf{Assumption 1} of the algorithm of Appendix says that $\rk \tilde F^2_k(z_{k-1})=const.$ for $z_{k-1}\in M_k\cap U_k$. In (A1) below, we replace it by a stronger rank assumption on a neighborhood $U\subseteq X$ of $x_p$.}
\begin{thm}\label{Thm:NWF}
	Consider a   DAE $\Xi_{l,n}=(E,F)$, assume that ${\rm rank\,}E(x)=const.=q$ around a  point $x_p$. Also assume in the geometric reduction algorithm of Appendix that 
	\begin{itemize}
		\item [{\rm (A1)}]    $\rk \left[\begin{smallmatrix}
		\rD H_{k-1}\\
		\rD \tilde F^2_k
		\end{smallmatrix} \right](x) =const.$   \red{for $1\le k \le k^*$ ($H_0$ is absent) and for} all $x$ around $x_p$;
		\item [{\rm (A2)}]    $\dim E(x)T_xM^c_k=const.$   for $x\in M^c_k$ around $x_p$, $1\le k\le k^*$;
		\item [{\rm (A3)}] $l=n$ and $\dim M^*=\dim E(x) T_xM^*$, i.e., \red{$r^*=n^*$,} for all $x\in M^*$ around $x_p$.  
	\end{itemize}
	Then $\Xi$ is internally regular and there exists a neighborhood $U$ of $x_p$ such that $\Xi$ is locally  on $U$  ex-equivalent to the DAE (\ref{Eq:NF1}), represented \red{by} the nonlinear Weierstrass form 
	\begin{align}\label{Eq:NF1}
	\mathbf{NWF1}:	\left[ {\begin{matrix}
		{\begin{matrix}
			{{N_{{\rho_1}}}}&0& \cdots &0\\
		0&{{N_{{\rho_2}}}}& \ddots & \vdots \\
			\vdots & \ddots & \ddots &0\\
			0& \cdots &0&{{N_{{\rho_m}}}}
			\end{matrix}}&\vline& 0\\
		\hline
		{G\left( {z ,z^*} \right)}&\vline& I
		\end{matrix}} \right]\left[ {\begin{matrix}
		{{{\dot z }_1}}\\
		{{{\dot z }_2}}\\
		\vdots \\
		{{{\dot z }_m}}\\
		{\dot z^*}
		\end{matrix}} \right] = \left[ {\begin{matrix}
		{{z _1}}\\
		{{z _2}}\\
		\vdots \\
		{{z _m}}\\
		{{f^*}\left( {z ,z^*} \right)}
		\end{matrix}} \right] + \left[ {\begin{matrix}
		a_1+b_1\dot z^{\rho}\\
		a_2+b_2\dot z^{\rho}\\
		\vdots \\
		a_m+b_m\dot z^{\rho}\\
		0
		\end{matrix}} \right],
	\end{align}
	where $z_i=(z_i^1,\ldots,z_i^{\rho_i})$ and $z^*$ are  new coordinates,  and $\dot z^{\rho}=(\dot z_{1}^{\rho_1},\dot z_2^{\rho_2},\ldots,\dot z_m^{\rho_m})$, with $m=n-q$. The indices $\rho_i$, $1\le i\le m$, satisfy $\rho_1\le \rho_2\le \ldots \le \rho_{m}$.  
	
	More specifically,  for $1\le i \le m$, the $\rho_i\times \rho_i$ nilpotent \red{matrices} $N_{\rho_i}$ and the $\rho_i$-dimensional vector-valued functions $a_i+b_i\dot z^{\rho}$ are of the following form
		\begin{align*} 
	 N_{\rho_i} = \footnotesize{\left[ {\begin{matrix}
			0&{}&{}&{}\\
			1& 0 &{}&{}\\
			{}& \ddots & \ddots &{}\\
			{}&{}&1&0
			\end{matrix}} \right],} \ \ a_i+b_i\dot z^{\rho}\! =\! \left[ {\begin{smallmatrix}
			0\\
			a_i^1 +\sum\limits_{s=1}^{m}b^1_{i,s}\dot z^{\rho_s}_s\\
			\vdots \\
			a_i^{\rho_i-1}+\sum\limits_{s=1}^{m}b^{\rho_i-1}_{i,s}\dot z^{\rho_s}_s
			\end{smallmatrix}} \right], 
		\end{align*}
	where the  functions $a^k_i,b^k_{i,s}$ \red{satisfy $a^k_i|_{M^c_k}=b^k_{i,s}|_{M^c_k}=0$, for $1\le k\le \rho_i-1$.}
\end{thm}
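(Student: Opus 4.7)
The plan is to apply the geometric reduction algorithm of the Appendix to $\Xi$ under the hypotheses (A1)--(A3), and then reorganize the resulting coordinate functions into the block structure of $\mathbf{NWF1}$. First, internal regularity of $\Xi$ around $x_p$ follows immediately from (A3) together with Theorem \ref{Thm:1}: the equality $r^* = n^*$ is precisely condition (ii) of that theorem, so the reduced DAE $\Xi^*$ on $M^*$ is equivalent to an ODE $\dot z^* = f^*(z^*)$, which will play the role of the last block of $\mathbf{NWF1}$.

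Next, I would run the algorithm under (A1) and (A2), producing the nested chain $M^* = M^c_{k^*+1} \subsetneq \cdots \subsetneq M^c_0 = X$ together with the new coordinate functions $\bar z_k$ introduced at step $k$, so that $M^c_k = \{\bar z_1 = \cdots = \bar z_k = 0\}$ locally around $x_p$. Set $\mu_k := \dim \bar z_k$. By construction, each $\bar z_k$ arises from a choice of independent rows of $\tilde F^2_k$; the constant-rank condition (A1) on $\bigl[\rD H_{k-1};\,\rD \tilde F^2_k\bigr]$ ensures that these rows can be chosen consistently across steps, yielding the monotonicity $\mu_1 \ge \mu_2 \ge \cdots \ge \mu_{k^*}$ with $\mu_1 = n-q = m$. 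I would then define the block sizes by $\rho_i := \#\{k : \mu_k \ge i\}$, ordered $\rho_1 \le \cdots \le \rho_m$; this gives exactly $m$ blocks and $\sum_{i=1}^m \rho_i = n - n^*$.

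With the blocks defined, I would relabel the components of $\bar z_k$ as $z_i^k$, where $i$ runs over the blocks with $\rho_i \ge k$, and collect $z_i = (z_i^1,\dots,z_i^{\rho_i})$. Together with $z^*$ (coordinates on $M^*$), this yields a local diffeomorphism $\Psi$ from a neighborhood of $x_p$ onto the coordinates $(z_1,\dots,z_m,z^*)$. The equations of $\mathbf{NWF1}$ are then obtained row-by-row: at step $k$ of the algorithm, the independent rows of $F_k$ that vanish on $M^c_k$ can be written, modulo terms that vanish on $M^c_k$ and an affine coupling to the ``top'' velocities $\dot z_i^{\rho_i}$ (which correspond precisely to the driving variables of an $\mathbf{Expl}(\Xi)$ system), as $\dot z_i^{k-1} = z_i^k$. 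This produces the subdiagonal $N_{\rho_i}$ structure; the residual terms are packaged into $a_i^{k-1}$ and $b_{i,s}^{k-1}\dot z_s^{\rho_s}$, where by construction $a_i^k|_{M^c_k} = b_{i,s}^k|_{M^c_k} = 0$. The last block row, coming from the $z^*$-coordinates, inherits the ODE structure $\dot z^* = f^*(z,z^*)$ from internal regularity, with the coupling $G(z,z^*)\dot z$ reflecting how the top velocities $\dot z_i^{\rho_i}$ enter the dynamics off $M^*$.

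The main obstacle will be the \emph{consistent alignment} of the block coordinates across different steps of the algorithm. At each step, there is considerable freedom in choosing $\bar z_k$ (any completion to a coordinate chart works for the algorithm itself), and one must show that this freedom can be exercised so that the components of $\bar z_k$ sit ``above'' components of $\bar z_{k-1}$ to form coherent chains $z_i^1,z_i^2,\dots,z_i^{\rho_i}$. This is precisely what assumption (A1) delivers: the constant rank of $[\rD H_{k-1};\,\rD \tilde F^2_k]$ guarantees that the new constraints at step $k$ can be chosen to correspond, via the DAE dynamics, to time derivatives of a selected subset of the previous-step constraints, up to a remainder with the specified vanishing behavior on $M^c_k$. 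Carrying out this alignment inductively, and then verifying that the residual functions $a_i^k, b_{i,s}^k$ indeed vanish on $M^c_k$, is the technical heart of the argument; the dimension counting and the production of the $\mathbf{NWF1}$ shape are then formal consequences.
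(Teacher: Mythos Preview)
Your route is genuinely different from the paper's. The paper does \emph{not} work directly with the geometric reduction algorithm to build the block coordinates. Instead, it passes to an explicitation $\Sigma\in\mathbf{Expl}(\Xi)$, shows via Claim~\ref{Cl:claim1} that (A1)--(A3) translate into the regularity hypotheses of Isidori's zero dynamics algorithm (Proposition~6.1.3 of \cite{Isid95}), and then invokes Isidori's normal form (Proposition~6.1.5) to obtain the chain structure~(\ref{Eq:ZD form}) for $\Sigma$. After a feedback $\tilde v=\alpha+\beta v$ and a sequence of coordinate changes plus output multiplications to kill the cross-coupling terms $\delta^j_{i,s}$, the control system is in a form $\tilde\Sigma$ whose ``de-explicitation'' (delete $\dot z_i^{\rho_i}=\tilde v_i$, set $y=0$, replace $\tilde v$ by $\dot z^{\rho}$) is exactly $\mathbf{NWF1}$; Theorem~\ref{Thm:ex and sys} then gives $\Xi\overset{ex}{\sim}\tilde\Xi$. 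Thus the $\dot z^{\rho}$-terms in $\mathbf{NWF1}$ are precisely the driving variables, and the whole alignment of chains is outsourced to Isidori.

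Your direct approach is plausible in outline, but the step you flag as ``the technical heart'' is considerably harder than you suggest, and (A1) alone does not ``deliver'' it. The constant-rank condition (A1) guarantees that each $M^c_k$ is a nice submanifold and that the $\bar z_k$ can be chosen as independent functions; it says nothing about \emph{which} components of $\bar z_k$ arise as time-derivatives of which components of $\bar z_{k-1}$ along the DAE dynamics. Establishing that correspondence---that the new constraints at step $k$ are, modulo $\mathbf I^k$ and an affine coupling in $\dot z^{\rho}$, the derivatives of a subset of the old ones---is exactly the content of the zero dynamics construction, and proving it from scratch for the DAE amounts to reproving Isidori's Proposition~6.1.5 in this setting. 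Two specific subclaims you make also need independent justification: (i) $\mu_1=m$ (equivalently $\rk Dh=m$) is not a direct consequence of (A1), it follows only after combining (A3) with the square structure $l=n$; (ii) the monotonicity $\mu_1\ge\mu_2\ge\cdots$ is a structural fact about the zero dynamics filtration, not a formal consequence of the rank being constant. If you want to avoid the detour through control theory, you will need to supply both of these and the chain-alignment argument explicitly; the paper's approach buys all of this at the price of invoking \cite{Isid95}.
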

The proof of Theorem \ref{Thm:NWF} is given in Section \ref{sec:prf_Thm4}.  This proof is closely related to the zero dynamics algorithm for nonlinear control systems  shown in \cite{Isid95} and the construction procedure of the above normal form is not difficult but quite tedious, {so} in order to avoid reproducing the zero dynamics algorithm, we will use some results directly from \cite{Isid95}  with small modifications. 
\begin{rem}

	(i) Assumption (A2) of Theorem \ref{Thm:NWF}  is equivalent to \textbf{Assumption 1} of the geometric reduction algorithm of Appendix. By Theorem~\ref{Thm:1}, we know that  (A3) of Theorem \ref{Thm:NWF} implies that $\Xi$ is internally regular around $x_p$.
	
	(ii) A \red{component-wise} expression of the above \textbf{NWF1} is
	$$
	\mathbf{NWF1}:\left\lbrace \begin{array}{c@{\ }l}
	0&=z^1_i, \ \ 1\le i \le m,\\
	\dot z^k_i&=z^{k+1}_i+	a_i^k +\sum\limits_{s=1}^{m}b^k_{i,s}\dot z^{\rho_s}_s,  \ \ 1\le k\le \rho_i-1,\\ 
	\dot z^*&=f^*-G\dot z,
	\end{array}\right. 
	$$
	where \red{$a^k_i,b^k_{i,s}, f^*$ and $G$ depend on $(z,z^*)$.}

	(iii)	The submanifolds   $M^c_k$, $k\ge 1$,  of the algorithm  \red{are given by}
	\begin{align*}
	M^c_k\!=\!\left\lbrace (z,z^*):z^j_i=0, \  1\le i\le m,\ 1 \le j\le k\right\rbrace,
	\end{align*}  
	\cyan{and} the maximal invariant submanifold $M^*$ is given by
	$$
	M^*=\{(z,z^*):z^j_i=0, \ 1\le i\le m, \red{1\le j\le \rho_i}\}.
	$$
Therefore, an equivalent condition for $a^k_i|_{M^c_k}=b^k_{i,s}|_{M^c_k}=0$ is that $a^k_i,b^k_{i,s}\in \mathbf{I}^k$, where $\mathbf{I}^k$ is the ideal generated by $z_i^j$, $1\le i\le m$, $1\le j\le k$ in the ring of smooth functions of $z^a_b$ and \cyan{$z^*_c$}.

	(iv) We see that \red{all maximal solutions $(z(\cdot),z^*(\cdot))$ are unique and of the form $(0,z^*(\cdot))$, where}  $z^*(\cdot)$ are maximal solutions of the  ODE $\dot z^*=f^*(0,z^*)$ on $M^*$,  {which agrees with the result of Theorem} \ref{Thm:1}(iii).
\end{rem}
 
\begin{exa}  [continuation of Example \ref{Ex:ir}]
	Consider the DAE $\Xi_{6,6}=(E,F)$ of (\ref{Eq:DAEex})  around the   point \red{$x_p=(0,1,0,0,0,1)$}.
A control system $\Sigma_{6,2,2}\in \mathbf{Expl}(\Xi)$ is
\begin{align*} 
\Sigma: 
\left[ {\begin{smallmatrix}
	\dot x_1\\
	\dot x_2\\
	\dot x_3\\
	\dot x_4\\
	\dot x_5\\
	\dot x_6\\
	\end{smallmatrix}} \right] = \left[ {\begin{smallmatrix}
	x_6(x_2-x_6)-x_1\\
	\frac{x_1}{x_6}-1\\
	0\\
	x_5-x_2+x_6\\
	x_4-x_6(x_2-x_6)\\
	-x_6
	\end{smallmatrix}} \right]  + \left[ {\begin{smallmatrix}
	x_6(x_3+x_5)&\frac{x_1x_5}{x_6}\\
	\ln x_6&0\\
	0&\frac{x_1}{x_6}-1\\
	0&0\\
	0&1\\
	0&x_5
	\end{smallmatrix}} \right]\left[ {\begin{smallmatrix}
	{{v_1}}\\
	{{v_2}}
	\end{smallmatrix}} \right],  & \ \ \left[ {\begin{smallmatrix}
	y_1\\
	y_2
	\end{smallmatrix}} \right]= \left[ \begin{smallmatrix} \frac{x_1}{x_6}\\
x_3+x_5
\end{smallmatrix}\right]. 
\end{align*}
It can be observed from Example \ref{Ex:ir} that the assumptions (A1)-(A3) of Theorem \ref{Thm:NWF} are satisfied. Now via the following local changes of coordinates defined on $U=X=\left\lbrace x\in X: x_6> 0,x_1\ne x_6 \right\rbrace $:
$$
z^1_1=\frac{x_1}{x_6}, \ \ z^2_1=x_2-x_6,\ \ z^1_2=x_3+x_5,\ \ z^2_2=x_4-x_2x_6+x^2_6, \ \ z^3_2=x_5, \ \ z^*=x_6,
$$
we can bring $\Sigma$ into \cyan{the} system   $\Sigma'$ below, which is of the zero dynamics form (\ref{Eq:ZD form}) \cyan{as given by Claim~\ref{Cl:claim1}}, 
		\begin{equation*}
		\begin{footnotesize}
\Sigma':\left\lbrace \begin{array}{c@{\,}l}
y_1&= z _1^1 \\
\dot z _1^1 &= z _1^2  +  z^1_2  v_1\\ 
\dot z _1^2 &= z^1_1+\ln z^*\cdot v_1-z^3_2v_2\\ 
y_2&= z _2^1\\
\dot z _2^1 &= z _2^2 +z^1_1v_2\\ 
\dot z _2^2&= z _2^{3} + z^*(z^1_1+\ln z^*\cdot v_1-z^3_2v_2)-z^2_1z^3_2v_2\\
\dot z _2^3 &=z^2_2+ v_2\\
\dot z^*&=-z^*+z^3_2 v_2,
\end{array} \right.  \Rightarrow \Sigma'': \left\lbrace \begin{array}{c@{\,}l}
y_1&= z _1^1 \\
\dot z _1^1 &= z _1^2 -\frac{z^1_1z^1_2}{\ln z^*}+ \frac{z^1_2}{\ln z^*}\tilde v_1+\frac{z^1_2z^3_2}{\ln z^*}\tilde v_2\\ 
\dot z _1^2 &= \tilde v_1\\ 
y_2&= z _2^1\\
\dot z _2^1 &= z _2^2 -z^1_1z^2_2+z^1_1\tilde v_2\\ 
\dot z _2^2&= z _2^{3} +z^*\tilde v_1+z^2_1z^2_2z^3_2-z^2_1z^3_2\tilde v_2\\
\dot z _2^3 &=\tilde v_2\\
\dot z^*&=-z^*-z^2_2z^3_2+z^3_2 \tilde v_2,
\end{array} \right. 
		\end{footnotesize}
	\end{equation*}
\cyan{where} the feedback transformation 
$$ 
\left[ {\begin{smallmatrix}
	\tilde v_1\\
	\tilde v_2
	\end{smallmatrix}} \right]=\left[ {\begin{smallmatrix}
	\frac{x_1}{x_6}\\
	x_4-x_2x_6+x^2_6
	\end{smallmatrix}} \right]+\left[ \begin{smallmatrix}
\ln x_6&-x_5\\
0&1
\end{smallmatrix} \right] \left[ \begin{smallmatrix}
v_1\\
v_2
\end{smallmatrix}\right], $$
\noindent \cyan{brings} the system $\Sigma'$   into the system $\Sigma''$  above. 
In order to eliminate $z^* {\tilde v_1}$ in $\dot z _2^2 = z _2^{3} +z^*\tilde v_1+z^2_1z^2_2z^3_2-z^2_1z^3_2\tilde v_2$ of $ \Sigma''$, we define the change of  coordinates 
 $$
\tilde z^1_1=z^1_1,~~\tilde z^2_1=z^2_1,~~\tilde z^1_2=z^1_2-z^*z^1_1, ~~ \tilde z^2 _2=z^2_2-z^*z^2_1,~~\tilde z^3_2=z^3_2.
$$ 
and   the output multiplication  $$\left[ \begin{smallmatrix}
\tilde 	y_1\\
	\tilde y_2
\end{smallmatrix}\right]=\left[ \begin{smallmatrix}
1&0\\
z^*&1
\end{smallmatrix}\right]\left[ \begin{smallmatrix}
y_1\\
y_2
\end{smallmatrix}\right].$$ Then the system $\Sigma''$ becomes
$$
\begin{footnotesize} 
\tilde \Sigma:\left\lbrace \begin{array}{c@{\,}l}
	\tilde y_1&= \tilde z _1^1 \\
	\dot {\tilde z} _1^1 &= \tilde z _1^2 -\frac{\tilde z^1_1(\tilde z^1_2+\tilde z^1_1z^*)}{\ln z^*}+ \frac{(\tilde z^1_2+\tilde z^1_1z^*)}{\ln z^*}\tilde v_1+\frac{(\tilde z^1_2+\tilde z^1_1z^*)\tilde z^3_2}{\ln z^*}\tilde v_2\\ 
	\dot {\tilde z} _1^2 &= \tilde v_1\\ 
	\tilde y_2&= \tilde z _2^1\\
	\dot {\tilde z} _2^1 &= \tilde z _2^2 +\tilde z^1_1(\tilde z^2_2+\tilde z^2_1z^*)({\tilde z} ^3_2-1)+\frac{\tilde z^1_1{\tilde z} ^1_2z^*}{\ln z^*}-\frac{(\tilde z^1_2+\tilde z^1_1z^*)z^*}{\ln z^*}\tilde v_1-(\tilde z^1_1\tilde z^3_2+\frac{(\tilde z^1_2+\tilde z^1_1z^*)\tilde z^3_2z^*}{\ln z^*}) \tilde v_2\\ 
	\dot {\tilde z} _2^2&=\tilde  z _2^{3} + \tilde z^2_1z^*\\
	\dot {\tilde z} _2^3 &=\tilde v_2\\
	\dot z^*&=-z^*-\tilde z^3_2(\tilde z^2_2+\tilde z^2_1z^*)+\tilde z^3_2 \tilde v_2.
\end{array} \right. 
\end{footnotesize}
$$ 
Now we drop all the tildes in the system $\tilde \Sigma$ for ease of notation. By setting $y_1=y_2=0$, replacing  $v_1=\dot z^2_1$, $v_2=z^3_2$, and deleting the  equations $\dot z^2_1=v_1$, $z^3_2=v_2$, we get the following DAE $\tilde \Xi$ from $\tilde \Sigma$,
\begin{align}
\begin{footnotesize}
\tilde \Xi: \left[ \begin{array}{cc;{2pt/2pt}ccc;{2pt/2pt}c}
0&0&0&0&0&0\\
1&0&0&0&0&0\\
\hdashline
0&0&0&0&0&0\\
0&0&1&0&0&0\\
0&0&0&1&0&0\\
\hdashline
0&0&0&0&{z}^3_2&1 
\end{array}\right]\left[ {\begin{matrix}
	\dot {z}^1_1\\
	\dot {z}^2_1\\
	\dot {z}^1_2\\
	\dot {z}^2_2\\
	\dot {z}^3_2\\
	\dot  z^*\\
	\end{matrix}} \right]=\left[ {\begin{matrix}
	{z}^1_1\\
	{z}^2_1\\
	{z}^1_2\\
	{z}^2_2\\
	{z}^3_2\\
	-  z^*-  z^3_2(  z^2_2+  z^2_1z^*)\\
	\end{matrix}} \right]+\left[ {\begin{matrix}
	0\\
	a^1_1+b^1_{11}  \dot {z}^2_1+b^1_{12} \dot {z}^3_2 \\
	0\\
	a^1_2+b^1_{21}  \dot {z}^2_1+b^1_{22}\dot {z}^3_2\\
	a^2_2 \\
	0\\
	\end{matrix}} \right],
\end{footnotesize}
\end{align}
where $a^1_1=-\frac{z^1_1(z^1_2+z^1_1z^*)}{\ln z^*}$, $b^1_{11}=\frac{(z^1_2+z^1_1z^*)}{\ln z^*}$, $b^1_{12}=\frac{(z^1_2+z^1_1z^*)z^3_2}{\ln z^*}$, $a^1_2=z^1_1(z^2_2+z^2_1z^*)({z} ^3_2-1)+\frac{z^1_1{z} ^1_2z^*}{\ln z^*}$, $b^1_{21}=-\frac{(z^1_2+z^1_1z^*)z^*}{\ln z^*} $,$b^1_{22}=z^1_1z^3_2+\frac{(z^1_2+z^1_1z^*)z^3_2z^*}{\ln z^*}$, $a^2_2=z^2_1z^*$. It is clear that $\tilde \Sigma\in \mathbf{Expl}(\tilde \Xi)$, thus we have $\Xi\mathop\sim\limits^{ex}\tilde \Xi$ since $\Sigma\in \mathbf{Expl} (\Xi)$ and $\Sigma\mathop\sim\limits^{sys}\tilde \Sigma$ (see Theorem~\ref{Thm:ex and sys}). The above DAE $\tilde \Xi$ is in the \textbf{NWF1} of (\ref{Eq:NF1}) and  the sequence of submanifolds $M^c_k$ of the geometric reduction algorithm can be expressed as $M^c_1=\left\lbrace (z,z^*):z_1^1=z^1_2=0 \right\rbrace$, $M^c_2=\left\lbrace (z,z^*)\in M^c_1:z_2^1=z^2_2=0 \right\rbrace$ and
$$
 M^*=M^c_3=\left\lbrace (z,z^*)\in M^c_2: z^3_2=0\right\rbrace .
$$
The \red{functions} $a^1_1,b^1_{11},b^1_{12},a^1_2,b^1_{21},b^1_{22}\in \mathbf{I}^1$  \red{vanish  on} $M^c_1$, and the function $a^2_2\in \mathbf{I}^2$ \red{vanishes on}  $M^c_2$.
	\end{exa} 
The form \textbf{NWF1} of Theorem \ref{Thm:NWF} is related to the zero dynamics of nonlinear control systems. In the remaining part of this section, we will use the notions of \emph{(vector) relative degree} and \emph{invariant distributions} \cyan{of} nonlinear control theory to study when a DAE $\Xi$ is ex-equivalent to a simpler form
\begin{align}\label{Eq:nonlinearWF}
\textbf{NWF2}:\left[ {\begin{matrix}
	N&0\\
	0&I
	\end{matrix}} \right]\left[ {\begin{matrix}
	\dot z\\
	\dot z^*
	\end{matrix}} \right] =  \left[ {\begin{matrix}
	z\\
	f^*(z^*)
	\end{matrix}} \right],
\end{align}
where $N={\rm diag}\left( N_{1},\ldots,N_{m}\right) $, with $N_i\in \mathbb R^{\rho_i\times \rho_i}$, $i=1,\ldots,m$,  being  nilpotent matrices of index $\rho_i$.   The \textbf{NWF2} is  \cyan{a perfect nonlinear counterpart of} the linear \textbf{WF} because the nonlinear terms $G$, $a_i$ and $b_j$ of \textbf{NWF1} \cyan{are absent in \textbf{NWF1}} and $f^*$ depends  on $z^*$-variables \cyan{only}.
We now recall the definitions of (vector) relative degree and (conditional) invariant distributions for nonlinear control systems.
\begin{defn}[relative degree \cite{Isid95}]\label{Def:r.d}
	A \cyan{square} control system $\Sigma_{n,m,m}=(f,g,h)$ has a (vector)  relative degree $\rho=(\rho_1,\ldots,\rho_m)$ at a point $x_p$ if (i) $L_{g_j}L^k_fh_i(x)=0$ for all $1\le j\le m$, $k<\rho_i-1$, for all $1\le i\le m$, and for all $x$ in a neighborhood of $x_p$;  (ii) the $m\times m$ decoupling matrix $D(x)=(L_{g_j}L^{\rho_i-1}_fh_i(x))$, $1\le i, j\le m$, is invertible around $x_p$.
\end{defn}
For a nonlinear control system $\Sigma_{n,m,p}=(f,g,h)$, define a sequence  of distributions $S_i$ by
\begin{align}\label{Eq:S_idistr}
\left\{ \begin{array}{l@{\,}l}
S_1&:=\mathcal G={\rm span}\left\lbrace g_1,\ldots,g_m\right\rbrace ,\\
S_{i+1}&:=S_i+[f,S_i\cap\ker{\rm d}h]+[\mathcal G,S_i\cap\ker{\rm d}h],\\
S^*&:=\sum\limits_{i\ge1} {S_i}.
\end{array} \right. 
\end{align} 
\begin{thm}\label{Thm:NWF2}
For a nonlinear  DAE $\Xi_{n,n}=(E,F)$ (i.e., $l=n$),  assume that $\rk E(x)=const.$ around a point $x_p\in X$. Then $\Xi$ is locally ex-equivalent to the \textbf{NWF2}, given by (\ref{Eq:nonlinearWF}), around $x_p$ if and only if there exists a control system $\Sigma=\Sigma_{n,m,m}=(f,g,h)\in \mathbf{Expl}(\Xi)$ such that 
\begin{itemize}
\item[(i)] the system $\Sigma$ has a well-defined relative degree $\rho=(\rho_1,\ldots,\rho_m)$ at $x=x_p$; 
\item[(ii)] the distributions $S_i$ of $\Sigma$, defined by (\ref{Eq:S_idistr}), are involutive for all $1\le i\le n-1$.
\end{itemize} 
\end{thm}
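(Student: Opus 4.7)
The plan is to reduce the ex-equivalence of DAEs to the sys-equivalence of their explicitations via Theorem~\ref{Thm:ex and sys}, and then invoke classical decoupling normal forms from nonlinear control theory \cite{Isid95}.

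For necessity, I would exhibit a canonical explicitation $\Sigma_\star$ of the \textbf{NWF2}: each nilpotent block $N_{\rho_i}\dot z_i = z_i$ gives $0 = z_i^1$, $\dot z_i^k = z_i^{k+1}$ for $1\le k\le \rho_i-1$, and leaves $z_i^{\rho_i}$ free, so setting $v_i = \dot z_i^{\rho_i}$ and $y_i = z_i^1$ produces $m$ pure integrator chains together with the autonomous dynamics $\dot z^* = f^*(z^*)$. A direct computation yields $L_{g_j}L_f^{\rho_i - 1} h_i = \delta_{ij}$, so $\rho$ is the vector relative degree with decoupling matrix $I_m$, while every $S_i$ is spanned by the constant coordinate vector fields $\partial/\partial z_j^k$ and is therefore involutive. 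If $\Xi \mathop\sim\limits^{ex} \mathbf{NWF2}$ via a pair $(\psi,Q)$, Proposition~\ref{Pro:NonDAEexpl} lets me choose the explicitation data for $\Xi$ so that the resulting $\Sigma\in\mathbf{Expl}(\Xi)$ is simply the pull-back of $\Sigma_\star$ by $\psi$; since a state-space diffeomorphism preserves both the relative degree and the distributions $S_i$, conditions (i) and (ii) transfer to $\Sigma$.

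For sufficiency, given $\Sigma = (f,g,h) \in \mathbf{Expl}(\Xi)$ satisfying (i) and (ii), I would first apply the classical relative-degree normal form \cite[Chapter~5]{Isid95}: there exist local coordinates $(\xi,\eta)$, with $\xi_i^k = L_f^{k-1} h_i$, and a regular static feedback $v = \alpha(x)+\beta(x)\tilde v$ transforming $\Sigma$ into
\begin{align*}
\dot \xi_i^k &= \xi_i^{k+1}, \ 1\le k\le \rho_i - 1, \qquad \dot \xi_i^{\rho_i} = \tilde v_i, \\
\dot \eta &= q(\xi,\eta) + p(\xi,\eta)\tilde v, \qquad y_i = \xi_i^1.
\end{align*}
The geometric theory of conditioned invariant distributions \cite[Chapter~6]{Isid95} then allows me to exploit the involutivity of every $S_i$ to produce an additional change of $\eta$-coordinates (together with a refinement of the feedback, if needed) that eliminates both the $\xi$-dependence and the $\tilde v$-dependence in the $\eta$-equation, so that the final form becomes $\dot{\tilde\eta} = f^*(\tilde\eta)$. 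Relabeling $z_i = \xi_i$ and $z^* = \tilde\eta$, and translating back into DAE form by substituting $\tilde v_i = \dot z_i^{\rho_i}$ and adjoining the algebraic constraints $y_i = 0$, yields exactly the \textbf{NWF2}; a final application of Theorem~\ref{Thm:ex and sys} lifts this sys-equivalence to $\Xi\mathop\sim\limits^{ex}\mathbf{NWF2}$.

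The main obstacle is the decoupling step of sufficiency: translating the involutivity of the whole sequence $\{S_i\}$ into the simultaneous removal of the $\xi$-coupling $q(\xi,\eta)-q(0,\eta)$ and the input-coupling $p(\xi,\eta)\tilde v$. The delicate point is verifying that involutivity at each stage (not merely of $S^* = \sum_i S_i$) is the correct hypothesis enabling a stepwise application of the Frobenius theorem producing a $\tilde \eta$ in which $\dot{\tilde \eta}$ has no residual dependence on $\xi$ or $\tilde v$. A secondary verification is that all intermediate transformations remain compatible with Definition~\ref{Def:sys-equivalence}, so that the final invocation of Theorem~\ref{Thm:ex and sys} is legitimate and the constancy-of-rank assumption on $E$ is preserved throughout.
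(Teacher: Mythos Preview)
Your overall strategy --- reduce to sys-equivalence via Theorem~\ref{Thm:ex and sys}, then quote a control-theoretic normal form --- is exactly what the paper does (see Remark~\ref{Rem:NWF2}(i)). Your necessity argument is fine: the pull-back of the canonical explicitation of \textbf{NWF2} under the diffeomorphism $\psi$ of the ex-equivalence is indeed an element of $\mathbf{Expl}(\Xi)$ (one checks this directly from Definition~\ref{Def:expl} using $E_1=(\tilde E_1\circ\psi)\tfrac{\partial\psi}{\partial x}$ and the corresponding right inverse), and state diffeomorphisms preserve both the relative degree and the distributions $S_i$.

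The gap is in sufficiency. You claim that involutivity of the $S_i$, together with a further change of $\eta$-coordinates and a ``refinement of the feedback'', yields $\dot{\tilde\eta}=f^*(\tilde\eta)$ with \emph{no} residual dependence on $\xi$. That is too strong and is in general false under coordinates-plus-feedback alone. Already for a single-input single-output system of relative degree one ($\dot\xi=\tilde v$, $\dot\eta=q(\xi,\eta)$, $y=\xi$), all $S_i$ equal $\mathcal G$ and are trivially involutive, yet no feedback touches the $\eta$-equation, and any coordinate change $\tilde\eta=\phi(\eta)$ leaves the $\xi$-dependence in $q$ intact (allowing $\phi$ to depend on $\xi$ reintroduces $\tilde v$ via $\dot\xi$). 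What the involutivity of all $S_i$ \emph{does} buy you, via the Marino--Tomei input-output special form \cite{marino1994equivalence} quoted in Remark~\ref{Rem:NWF2}(i), is the intermediate form $\dot z^*=\bar f^*(z^*,y)$: the $\eta$-equation depends on $\xi$ only through the outputs $y=(z_1^1,\ldots,z_m^1)$, and has no $\tilde v$-term.

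The missing ingredient is \emph{output injection}, which is part of sys-equivalence (Definition~\ref{Def:sys-equivalence}) but which you never invoke. Writing $\bar f^*(z^*,y)=\bar f^*(z^*,0)+\gamma(z^*,y)\,y$ and applying the generalized output injection $f\mapsto f-\gamma h$ removes the $y$-dependence and yields $\dot z^*=f^*(z^*)$. Only then does Theorem~\ref{Thm:ex and sys} deliver $\Xi\mathop\sim\limits^{ex}\textbf{NWF2}$. So your ``main obstacle'' is real, but the resolution is not a clever Frobenius argument on the $S_i$; it is simply that sys-equivalence is richer than feedback equivalence, and the extra freedom of output injection is precisely what completes the decoupling.
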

We omit the proof the Theorem \ref{Thm:NWF2} since it is indicated by Theorem \ref{Thm:ex and sys} and some results from nonlinear control theory, see Remark \ref{Rem:NWF2}(i) below.
\begin{rem}\label{Rem:NWF2}
(i) Note that, under conditions (i) and (ii) of Theorem \ref{Thm:NWF2}, using the results in \cite{marino1994equivalence}, we can transform \cyan{the}  system $\Sigma$ into the following form (called the input-output special form in \cite{marino1994equivalence}) via suitable coordinates transformations and  feedback transformations,
$$
\left\lbrace \begin{array}{c@{\,}l}
{{\dot z^*}} &= \bar{f}^*(z^*,y),\\
\dot z^j_{i}&=z_i^{j+1}, \ 1\le j\le \rho_i-1, \ 1\le i\le m,\\\
\dot z_i^{\rho_i}&=v_i,\\
y_i&=z_{i}.
\end{array} \right.
$$
Rewrite $\bar f^*(z^*,y)=\bar f^*(z^*,0)+\gamma(z^*,y)y$ for some smooth function $\gamma$, \cyan{then} we can always get rid of the $y$-variables  in $\bar f^*(z^*,y)$ by an output injection  $\bar f^*\mapsto \bar f^*-\gamma y=f^*$, where $f^*=f^*(z^*)$. Thus the system $\Sigma$ is always sys-equivalent to the system $\tilde \Sigma$ below
$$
\Sigma\overset{sys}{\sim}\tilde \Sigma:\left\lbrace \begin{array}{c@{\,}l}
{{\dot z^*}} &= {f}^*(z^*),\\
\dot z^j_{i}&=z_i^{j+1}, \ 1\le j\le \rho_i-1,\\
\dot z_i^{\rho_i}&=v_i, \ 1\le i\le m,\\
y_i&=z_{i}.
\end{array} \right.\ \ \ \ \overset{\text{Thm.} \, \ref{Thm:ex and sys}}{\Longleftrightarrow}  \ \ \ \ \Xi \overset{ex}{\sim} \tilde \Xi:\left\lbrace \begin{array}{c@{\,}l}
{{\dot z^*}} &= {f}^*(z^*),\\
0&=z_{i}, \ 1\le i\le m,\\
\dot z^j_{i}&=z_i^{j+1}, \ 1\le j\le \rho_i-1.\\
\end{array} \right.
$$
So by Theorem \ref{Thm:ex and sys}, the DAE $\Xi$ is ex-equivalent to $\tilde \Xi$  represented in the \textbf{NWF2} since $\tilde \Sigma\in \mathbf{Expl}(\tilde\Xi)$.

(ii) The linear \cyan{counterparts} of the  distributions $S_i$, given by (\ref{Eq:S_idistr}), for   linear control systems of the form $\Lambda:\left\lbrace \begin{smallmatrix} 
\dot x&=&Ax+Bv\\
y&=&Cv 
\end{smallmatrix}\right. $  is $\mathcal W_1=\im B$, $\mathcal W_{i+1}=A(\mathcal W_i\cap \ker C)+\im B$, and are called the conditional invariant subspaces. We have shown in \cite{chen2021from} that for a linear DAE $\Delta=(E,H)$,  if a control system $\Lambda\in \mathbf{Expl}(\Delta)$, then for all $i\ge 1$, the subspaces $\mathcal W_i$ coincides with the Wong sequences $\mathscr W_i$ of $\Delta$, given by $
\mathscr W_1=\ker E$, $\mathscr W_{i+1}=E^{-1}H\mathscr W_{i}$. Therefore, the sequences of distributions $S_i$ can be seen as a nonlinear generalization of the Wong sequence $\mathscr W_i$.

(iii) Although conditions (i) and (ii) of Theorem \ref{Thm:NWF2} are necessary and sufficient for $\Xi$ being locally ex-equivalent to \textbf{NWF2}, it is, in general, not easy to check them because the relative degree and the involutivity of distributions $S_i$ are \emph{not} invariant under output multiplications and output injections (the two properties are invariant under coordinates changes and \cyan{feedback}). From Proposition~\ref{Pro:NonDAEexpl}, we know that a control system  $\Sigma\in \mathbf{Expl}(\Xi)$ is defined up to a feedback transformation, an output multiplication and a generalized output injection. So it is possible that for one system in $\mathbf{Expl}(\Xi)$,  conditions (i) and (ii) hold while for another explicitation system the two conditions \cyan{(or one of them)} are not satisfied. The problem of finding easily checkable conditions for a DAE being ex-equivalent to the \textbf{NWF2} remains open and, in view of the above analysis, is challenging.
\end{rem} 
\section{Proofs of the results}\label{sec:Proofs}
\subsection{Proofs of Proposition \ref{Pro:invariant submanifold} and Proposition \ref{Pro:invariant manifold}}\label{sec:prf_prop}
\begin{proof}[Proof of Proposition \ref{Pro:invariant submanifold}]
	Suppose that $M$ is a locally invariant submanifold around $x_p$. By Definition \ref{Def:invariant manifold}, there exists a neighborhood $U$ of $x_p$ such that for any point $x_0\in M\cap U$, there exists a solution {$x:I\rightarrow M\cap U$} satisfying $x(t_0)=x_0$ for a certain $t_0\in I$.  Then  we have ${F}(x(t))=E(x(t))\dot x(t)\in E(x(t))T_{x(t)}M$, $\forall t\in I$. It follows that ${F}(x_0)\in E(x_0)T_{x_0}M$ by taking $t=t_0$. Hence $F(x)\in E(x)T_xM$ for all $x\in M\cap U$.
	
 	Conversely, suppose  that $\dim E(x)T_xM=const.=\bar r$ and  $F(x)\in E(x)T_xM$  locally \red{for all $x\in M\cap U$}.  Notice that $M$ is a  smooth connected embedded submanifold,  thus  there exists a \cyan{smaller} neighborhood $U_1$ of $x_p$ and local coordinates $\psi(x)=z=(z_1,z_2)$ on $U_1$ such that $M\cap U_1=\left\lbrace z_2=0\right\rbrace $, where $z_1$ are any complementary   coordinates, with $\dim z_1=\bar n$, $\dim z_2=n-\bar n$ \red{and} $\bar n=\dim M$. \red{In} the local  $z$-coordinates, the DAE $\Xi$ has the following form
		$$ 
		E(x)\left( \frac{\partial \psi (x)}{\partial x}\right) ^{-1}\left( \frac{\partial \psi (x)}{\partial x}\right) \dot x=F(x) \Rightarrow\left[ \begin{matrix}
		\tilde E_1(z)&\tilde E_2(z)
		\end{matrix} \right]\left[  {\begin{matrix}
			{\dot z}_1\\
			{\dot z}_2
			\end{matrix}} \right]=\tilde F(z),
		$$
		where $\tilde E_1:U_1\rightarrow\mathbb{R}^{l\times \bar n}$, $\tilde E_2:U_1\rightarrow\mathbb{R}^{l \times(n-\bar n)} $, $\left[ \begin{matrix}
		\tilde	E_1\circ\psi &\tilde E_2\circ\psi
		\end{matrix} \right]=	E \left( \frac{\partial \psi  }{\partial x}\right) ^{-1}$ and  $\tilde F\circ\psi=F$. By setting $z_2=0$, we consider the following DAE defined locally on $M$ (denoted by $\Xi|_{M}$ \red{and} called the local $M$-restriction of $\Xi$, see Definition \ref{Def:restriction}): 
		$$
		\Xi|_{M}:	\tilde E_1(z_1,0)\dot z_1=\tilde F(z_1,0).
		$$ 
		Then by $\dim\, E(x)T_xM=const.=\bar r$   for all $x\in M$ around $x_p$, there exists a neighborhood $U_2\subseteq U_1$ of $x_p$ such that $\rk \tilde E_1(z_1,0)=\bar r$, $\forall z_1\in M\cap U_2$. So there exists $Q:M\cap U_2\rightarrow GL\left(l,\mathbb{R}\right)$ such that 	$\bar E_1(z_1)$ of $
		Q(z_1)\tilde E_1(z_1,0)= \left[ {\begin{smallmatrix}
			\bar E_1(z_1)\\
			0
			\end{smallmatrix}} \right] $ is of full row rank $\bar r$. Rewrite $\bar E_1(z_1)\dot z_1=\left[ {\begin{smallmatrix}
			\bar	E^1_1(z_1)&\bar E^2_1(z_1)
			\end{smallmatrix}} \right]\left[ {\begin{smallmatrix}
			\dot z^1_1\\\dot z^2_1
			\end{smallmatrix}} \right]$,  where   $z_1=(z^1_1,z_1^2)$, $\bar E^1_1:M\cap U_2\rightarrow \mathbb{R}^{\bar r\times \bar r}$ and $\bar E^2_1:M\cap U_2\rightarrow\mathbb{R}^{\bar r\times {(\bar n-\bar r)}}$
		and denote $Q(z_1)\tilde F(z_1,0)=\left[ {\begin{smallmatrix}
			\bar F_1(z_1)\\
			\bar F_2(z_1)
			\end{smallmatrix}} \right]$.  Without loss of generality, we  assume that   $\bar E^1_1(z_1)$ is invertible  (if not, we  permute the components of $z_1$ such that the first \red{$\bar r$} columns of $\bar E_1(z_1)$ are independent). Now by the assumption that $F(x)\in E(x)T_{x}M$  for all $x\in M$ around $x_p$, there exists a neighborhood $U_3\subseteq U_2$ such that   $\tilde F(z)\in \tilde{E}(z)T_z\psi(M)$ for all $z\in M\cap U_3$, i.e.,
		\begin{align*}
		\left[ {\begin{matrix}
			{\bar F}_1(z_1)\\
			{\bar F}_2(z_1)
			\end{matrix}} \right] \in \im \left[ {\begin{matrix}
			\bar E^1_1(z_1)&\bar E^2_1(z_1)\\
			0&0
			\end{matrix}} \right].
		\end{align*}
		It follows that  ${\bar F}_2(z_1)\equiv 0$ for all $z_1\in M\cap U_3$. Then consider the following DAE (which is actually a reduction of $\Xi|_{M}$, \red{denoted} by $\Xi^{red}_{M}$, see Definition \ref{Def:rednonDAE})
		\begin{align}\label{Eq:restrictedDAE}
		\Xi|^{red}_{M}:	\left[ {\begin{matrix}
			\bar E^1_1(z_1)&\bar E^2_1(z_1)
			\end{matrix}} \right]\left[ {\begin{matrix}
			{\dot z_1^1}\\
			{\dot z_1^2}
			\end{matrix}}\right]  = 
		\bar F_1\left( z_1\right) .
		\end{align} 
		Note that  a $\mathcal C^1$-curve $z_1:I\to M\cap U_3$ is a solution of	(\ref{Eq:restrictedDAE}) passing through $z_{10}=(z^1_{10},z^2_{10})$ if and only if $x(\cdot)=\psi^{-1} (z_1(\cdot),0)$ is a solution of $\Xi$ passing through $x_0=\psi^{-1}(z_{10},0)$. Observe that for any initial point $z_{10}\in M\cap U_3$, there always exists a solution $z_1(\cdot)$ of (\ref{Eq:restrictedDAE})  such that $z_1(t_0)=z_{10}$ for a certain $t_0\in I$  and $z_1(t)\in M\cap U_3$, $\forall t\in I$. Indeed,  rewrite   DAE (\ref{Eq:restrictedDAE}) as the following ODE (recall that $\bar E^1_1(z_1)$ is invertible): 
	\begin{align}\label{ODE0}
	\dot z_1^1= {\left(\bar E^1_1(z_1) \right)^{ - 1}}\left( \bar F_1\left( z_1 \right) - \bar E^2_1(z_1)\dot z_1^2\right).
	\end{align}
	It is always possible to  \red{parameterize}     solutions $z_1(\cdot)=(z^1_1(\cdot),z^2_1(\cdot))$ \red{of} (\ref{ODE0}) \red{as follows.} Denote $\dot z_1^2=v$,  $f(z_1) = {(E^1_1)^{ - 1}}\tilde F_1(z_1)$ and $g(z_1) =(E^1_1)^{ - 1}E^2_1(z_1)$, then   (\ref{ODE0}) can be expressed as
	\begin{align}\label{Eq:control0}
	\left\{ \begin{array}{l}
	{{\dot z}^1_1} = f(z_1) + g\left( z_1  \right)v,\\
	{{\dot z}^2_1} = v,
	\end{array} \right.
	\end{align}
\red{(called a $(Q,v)$-explicitation of (\ref{Eq:restrictedDAE}), see Definition \ref{Def:expl}), and for any solution $(z_1(\cdot),v(\cdot))$ of (\ref{Eq:control0}), with $v\in \mathcal C^0$, the curve $z_1(\cdot)$ is a $\mathcal C^1$-solution of (\ref{Eq:restrictedDAE}) satisfying $z_1(t_0)=z_{10}$ (see Proposition \ref{Pro:exlsol}). It follows that for any point $ x_0 = \psi^{-1}(z_{10}, 0)\in M \cap U_3$, there always exists a solution $x(\cdot)= \psi^{-1}(z_1(\cdot),0)$ of  $\Xi$  such that $x(t_0) = x_0$ for a certain
$t_0\in I$ and that $x(t) \in M \cap U_3$ for all $t\in I$, so} $M$ is a locally invariant submanifold of $\Xi$ around $x_p$ by definition. 
\end{proof}
\begin{proof}[Proof of Propostion \ref{Pro:invariant manifold}]
 Let $k$ be the smallest integer such that $M^c_0\supsetneq M^c_1\supsetneq \cdots \supsetneq M^c_k$, where $M^c_i$, $0\le i\le k$ are connected embedded submanifolds, and either $x_p\notin M_{k+1}$ or $x_p\in M_{k+1}$ and $M^c_{k+1}=M_{k+1}\cap U_{k+1}$ is a submanifold (by the recursive procedure assumptions) such that $\dim M^c_k=\dim M^c_{k+1}$. Then $k^*=k$ is the integer whose existence is  \cyan{asserted}.	The condition $k^*\le n$ follows from $\dim M^c_{i-1}>\dim M^c_i $, $1\le i\le k^*$.

\emph{Claim.}  If an consistent point  $x_c\in S_c\cap U_{k^*}$, then $x_c\in M_{k^*+1}$. Now we prove the  \emph{Claim} holds.
 Since $x_c$ is consistent, there exists a solution $(x(t),u(t))$, \red{defined on $I$,}  and $t_0\in I$ such that $x(t_0)=x_c$.  It follows  that for all $t\in I$, 
	\begin{align}\label{Eq:sol}
	E(x(t))\dot x(t)=F(x(t)).
	\end{align} 
	So  $F(x(t))\in {\rm Im\,} E(x(t))$, $\forall t\in I$. Thus by equation (\ref{Eq:Mseq}), we have $x(t)\in M_1$, $\forall t\in I$. Suppose that for a certain $i>1$, we have $x(t)\in M_{i-1}$, $\forall t\in I$. We then have that $\dot x(t)\in T_{x(t)}M_{i-1}$, $\forall t\in I$ (note that {when restricted to} $U_{i-1}$, the set $M_{i-1}$ is a submanifold). Thus in $U_{k^*}\subseteq U_i$,    equation (\ref{Eq:sol}) implies  $F(x(t))\in E(x(t))T_{x(t)}M^c_{i-1}$. It follows that $x(t)\in M_i\cap U_{i-1}$, for any $t\in I$, due to (\ref{Eq:Mseq}). By an induction argument, we conclude that $x(t)\in M_{k^*+1}\cap U_{k^*}$, and, in particular, we have $x_c=x(t_0)\in M_{k^*+1}\cap U_{k^*}$. 

(i) If $x_p\in M_{k^*+1}$, we have $\dim M^c_{k^*+1}=\dim M^c_{k^*}$ and since $M^c_{k^*+1}\subseteq M^c_{k^*}$, it follows that there exists an open neighborhood $U_{k^*+1}$ such that $M^c_{k^*+1}\cap U_{k^*+1}=M^c_{k^*}\cap U_{k^*+1}$. By assumption, $M^*=M^c_{k^*+1}\cap U^*$ satisfies $\dim E(x)T_xM^*=const.$ in $U^*\subseteq U_{k^*+1}$. So, using Proposition \ref{Pro:invariant submanifold}, we conclude that $M^*$ is a locally invariant submanifold on $U^*$. To prove \red{that} $M^*$ is maximal in $U^*$, let $M'$ be any invariant submanifold, then any point $x_0\in M'\cap U^*$ is consistent,   so $x_0\in S_c\cap U^*$, then by \red{the above} \emph{Claim},    $x_0\in M_{k^*+1}\cap U^*=M^*\cap U^*$ showing that $M^*$ is maximal in $U^*$.

(ii)	 We now prove that $M^*$   coincides with the consistent set $S_c$ on $U^*$.  Since $M^*\cap U^*$ is locally  invariant,  for any point $x_{0}\in M^*\cap U^*$, there exist  at least one solution $(x(\cdot),u(\cdot))$ \red{on $I$}  and $t_0\in I$ such that $x(t_0)=x_0$, which implies that  $x_0$ is consistent i.e., $x_0\in S_c$. It follows that   $M^*\cap U^*\subseteq S_c\cap U^*$. Conversely, consider any point $x_0\in S_c\cap U^*$, using again the above \emph{Claim}, we conclude that $x_0\in M_{k^*+1}\cap U^*=M^*\cap U^*$,  which implies   $S_c\cap U^*\subseteq M^*\cap U^*$.  Therefore, $M^*\cap U^*=S_c\cap U^*$.
\end{proof}
\subsection{ Proofs of  Proposition \ref{Pro:ismsol} and Theorem \ref{Thm:1}}\label{sec:prf_Thm1}

\begin{proof}[Proof of Proposition \ref{Pro:ismsol}]
	At every Step $k$ of the algorithm in Appendix, consider the DAE $\tilde \Xi_k=\Xi_{k-1}=(E_{k-1}, F_{k-1})$ and $  \hat \Xi_k=( \hat E_k, \hat F_k)$, the latter  given by (\ref{Eq:tildeXiuk}). Then we show that the following items are equivalent.
	(a). $z_{k-1}(\cdot)=\psi^{-1}_{k}(z_k(\cdot),\bar z_k(\cdot))$   is a solution of $\Xi_{k-1}$;
	(b). $(z_k(\cdot),\bar z_k(\cdot))$ is a solution of $\hat \Xi_k$;
	(c). $\bar z_k(\cdot)=0$ and $z_k(\cdot)$ is a solution of 
	$
	\Xi_{k}:E_{k}(z_k)\dot z_k=F_{k}(z_k)
	$,
	where $E_{k} (z_k)=\hat{E}^1_{k}(z_k,0)$, $F_{k} = \hat {F}^1_{k}(z_k,0)$ and where $\hat{E}^1_{k}$, $\hat {F}^1_{k}$ are defined in (\ref{Eq:tildeXiuk}).
	Since $\tilde \Xi_k=\Xi_{k-1}$	is locally ex-equivalent to $\hat \Xi_k$ via $Q_k$ and $\psi_k$,  we have that item (a) and item (b)   above are equivalent (see Remark \ref{Obs:1}). The equivalence of item (b) and item (c) follows from the fact that the solutions exists on $M^c_k$ only and should respect  the constraints $\bar z_k=0$.
	
	Then by the equivalence of (c) and (a),  we have, at the first step of the algorithm, that  $(z_1(\cdot),0)$  is a solution of   
	$E_1(z_1)\dot z_1=F_1(z_1)$, \red{together with} $\bar z_1=0,$
	if and only if $z_0(\cdot)=\psi^{-1}_{1}(z_1(\cdot),0)$  is a solution of $\Xi_0=\Xi=(E,F)$. In general, by  an induction argument, we can prove that $(z_k(\cdot),0,\ldots,0)$  is a solution of   
	$E_k(z_k)\dot z_k=F_k(z_k)$, \red{together with} $
	\bar z_1=0,\ldots, \bar z_{k}=0$, if and only if $x(\cdot)$   is a solution of   $\Xi$, where  $x(\cdot)$ is given by the following iterative formula
	\begin{align}\label{Eq:a}
	x(\cdot)=z_0(\cdot)= \psi^{-1}_{1}(z_1(\cdot),0), \ z_1(\cdot)=\psi^{-1}_{2}(z_2(\cdot),0),\ \ldots, \ z_{k-1}(\cdot)=\psi^{-1}_{k}(z_k(\cdot),0). 
	\end{align} 
	 Each diffeomorphism $\psi_k$ is defined on $W_k$, we extend \red{it} to $U_k$ by putting $\Psi_k=(\psi_k,\bar z_k,\ldots,\bar z_1)$. Now  we define the local diffeomorphism  $\Psi:=\Psi_{k^*}\circ\cdots\circ\Psi_2\circ\Psi_1:U_{k^*+1}\to \mathbb R^n$ (note that $\Psi_{k^*+1}=\Psi_{k^*}$). To show that the local diffeomorphism $\hat z=\Psi(x)$, where $\hat z=(z^*,\bar z)$, transforms solutions of $\Xi^u$ into those of $\hat\Xi^{\hat u}$, it is enough to observe that $\Psi$ satisfies (\ref{Eq:a}), for $k=k^*+1$. Now we prove that $E^*(z^*)$, for $z^*\in M^*$, is of full row rank. 	 Consider Step $k^*+1$ of the algorithm, note that the  $Q_{k^*+1}$-transformation ensures that $\tilde E^1_{k^*+1}(z_{k^*})$ is of full row rank. By   $M^c_{k^*+1}=\left\{ {z_{k^*} \in M^c_{k^*}\cap U_{k^*+1}\,|\, {\tilde F_{k^*+1}^2(z_{k^*}) = 0} } \right\}$ and   the fact that   $\dim M^c_{k^*}=n_{k^*}=n_{k^*+1}=\dim M^c_{k^*+1}$, we have    $\tilde F_{k^*+1}^2(z_{k^*})= 0$, $\forall z_{k^*}\in M^c_{k^*}\cap U_{k^*+1}$. As a consequence, the $\bar z_{k^*+1}$-coordinates  {are not present}, so there is no equation $\bar z_{k^*+1}=0$ in (\ref{Eq:DAEsep}). Moreover, we have $M^c_{k^*+1}=M^c_{k^*}$ in $U_{k^*+1}$, implying that $z_{k^*+1}=z_{k^*}$. Finally, it is seen from $E^*(z^*)= E_{k^*+1}(z_{k^*+1})=\hat E^1_{k^*+1}(z_{k^*})=\tilde  E^1_{k^*+1}(z_{k^*})$ that $E^*(z^*)$ is of full row rank for all $ z^*=z_{k^*+1}\in M^*{=M^c_{k^*+1}}$.  
\end{proof}
\begin{proof}[Proof of Theorem \ref{Thm:1}]
	Since  $M^*$ is locally invariant around $x_p$,  via a similar construction \red{to that} shown in the proof of Proposition \ref{Pro:invariant submanifold}, we can get a DAE $\Xi|^{red}_{M^*}$   of the  form (\ref{Eq:restrictedDAE}) (if the maximal invariant submanifold $M^*$ is constructed  via the algorithm in Appendix, then $\Xi|^{red}_{M^*}$ coincides with the DAE $\Xi^*$ of (\ref{Eq:Xi*}) from the results of that algorithm). Note that $\Xi|^{red}_{M^*}$ can be seen as an ODE possibly  with  free variables (see (\ref{ODE0}) and (\ref{Eq:control0})), and that $\Xi|^{red}_{M^*}$ has isomorphic solutions with $\Xi$ (see Proposition \ref{Pro:ismsol}). Thus $\Xi$ is internally regular around $x_p$, i.e., there exists  only one maximal solution passing through any $x_0\in M^*$ around $x_p$  if and only if no free variables are present in $\Xi^*=\Xi|^{red}_{M^*}$, i.e., $[\bar E^1_1,\bar E^2_1]$ of (\ref{Eq:restrictedDAE}) is invertible  or, equivalently, $n^*=\dim M^*=\dim E(x)T_xM^*=r^*$ for all $x\in M^*$ around $x_p$ (i.e., $E^*$ of (\ref{Eq:Xi*}) is invertible). Moreover, it is clear that $[\bar E^1_1,\bar E^2_1]$    is invertible if and only if $\Xi|^{red}_{M^*}$ of (\ref{Eq:restrictedDAE}) (or $\Xi^*$, given by (\ref{Eq:Xi*})) is ex-equivalent  to an ODE (\ref{Eq:ODE}) without free variables,  where $f^*=[\bar E^1_1,\bar E^2_1]^{-1}\bar F_1$ (or $f^*=(E^*)^{-1}F^*$), that is, $\Xi$ is internally equivalent to (\ref{Eq:ODE}) around $x_p$.  
\end{proof} 
\subsection{Proofs of Proposition \ref{Pro:NonDAEexpl}, Proposition \ref{Pro:exlsol}, Theorem \ref{Thm:ex and sys} and Proposition \ref{Pro:output zeroing vs invariant}}\label{sec:prf_Thm2}
\begin{proof}[Proof of Proposition \ref{Pro:NonDAEexpl}]
	\emph{If.} Throughout the proof below, we may drop the argument $x$ for the functions  $f(x),g(x),h(x),\ldots$,   for ease of notation. Suppose that $\Sigma$ and $\tilde \Sigma$ are equivalent via transformations given {by} (\ref{Eq:map}). First, ${\rm Im\,}\tilde g={\rm Im\,} g\beta =\ker E_1=\ker E$  \red{implies} that $\tilde g$ is another choice such that ${\rm Im\,} \tilde g=\ker E$. Moreover, we have 
	$$
	\tilde \Sigma:\left\{ {\begin{array}{c@{\ }l}
		\dot x&=\tilde f+\tilde g\tilde v=f+g\alpha+\gamma h+g\beta v=E^{\dagger}_1F_1+g\alpha+\gamma F_2+g\beta v,\\
		\tilde y&=\tilde h= \eta h.
		\end{array}}\right.	
	$$
	\red{Pre-multiplying} the differential part $\dot x=E^{\dagger}_1F_1+g\alpha+\gamma F_2+g\beta v$ of $\tilde \Sigma$ by $E_1$, we get (note that ${\rm Im\,}g=\ker E_1$)
	$$
	\left\{ {\begin{array}{c@{\ }l}
		E_1\dot x&=F_1+E_1\gamma F_2, \\
		\tilde y&=\eta h.
		\end{array}}\right.	
	$$
	Thus $\tilde \Sigma$ is an $(I,\tilde v)$-explicitation of the following DAE:
	$$
	\left[ {\begin{matrix}
		E_1\\
		0
		\end{matrix}} \right]\dot x=\left[ {\begin{matrix}
		F_1+E_1\gamma F_2\\
		\eta F_2
		\end{matrix}} \right].
	$$
	Since the above DAE can be {obtained} from $\Xi$ via $\tilde Q=Q'Q$, where $Q'=\left[ {\begin{smallmatrix}
		I_q&E_1\gamma\\
		0&\eta
		\end{smallmatrix}}\right]$, it proves that $\tilde \Sigma$ is a $(\tilde Q,\tilde v)$-explicitation of $\Xi$ corresponding to the choice of invertible matrix $\tilde Q=Q'Q$. Finally, by $E_1\tilde f=F_1+E_1\gamma F_2$, we get $\tilde f=\tilde E^{\dagger}_1(F_1+\gamma F_2)$ for the \cyan{above} choice of right inverse $\tilde E^{\dagger}_1$ of $E_1$.
	
	\emph{Only if.} Suppose that $\tilde \Sigma\in \mathbf{Expl}(\Xi)$ via $\tilde Q$, $\tilde E^{\dagger}_1$ and $\tilde g$. First, by ${\rm Im\,}\tilde g=\ker E={\rm Im\,}g$, there exists an invertible matrix $\beta$ such that $\tilde g=g\beta$. Moreover, since $E^{\dagger}_1$ is a right inverse of $E_1$ if and only if any solution $\dot x$ of $E_1\dot x=w$ is given by $E^{\dagger}_1w$, we have $E_1E_1^{\dagger}F_1=F_1$ and $E_1\tilde E_1^{\dagger}F_1=F_1$. It follows that $E_1(\tilde E^{\dagger}_1-E^{\dagger}_1)F_1=0$, so $(\tilde E^{\dagger}_1-E^{\dagger}_1)F_1\in \ker E_1$. Since $\ker E_1={\rm Im\,}g$, it follows that $(\tilde E^{\dagger}_1-E^{\dagger}_1)F_1=g
	\alpha$ for {a} suitable $\alpha$. Furthermore, since $Q$ is such that $E_1$ of
	$
	QE=\left[ {\begin{smallmatrix}
		E_1\\
		0
		\end{smallmatrix}} \right]$ is of full row rank, any other $\tilde Q$, such that $\tilde E_1$ of $\tilde QE=\left[ {\begin{smallmatrix}
		\tilde E_1\\
		0
		\end{smallmatrix}} \right]$ is \cyan{of} full row rank, must be of the form $\tilde Q=Q'Q$, where $Q'=\left[ {\begin{smallmatrix}
		Q_1&Q_2\\
		0&Q_4
		\end{smallmatrix}}\right]$.
	Thus via $\tilde Q$, $\Xi$ is ex-equivalent to
	\begin{align*}
	Q'\left[ {\begin{matrix}
		E_1\\
		0
		\end{matrix}} \right]\dot x=Q'\left[ {\begin{matrix}
		F_1\\
		F_2
		\end{matrix}} \right]\Rightarrow \left[ {\begin{matrix}
		Q_1E_1\\
		0
		\end{matrix}} \right]\dot x=\left[ {\begin{matrix}
		Q_1F_1+Q_2F_2\\
		Q_4F_2
		\end{matrix}} \right].
	\end{align*} 
	The equation on the right-hand side of the above can be expressed (using $\tilde E_1^{\dagger}$ and $\tilde g$) as:
	\begin{align*}
	\left\{ {\begin{array}{*{20}{l}}
		\dot x=\tilde E_1^{\dagger}F_1+\tilde E_1^{\dagger}Q^{-1}_1Q_2F_2+\tilde gv=E^{\dagger}_1F_1+g\alpha+E_1^{\dagger}Q^{-1}_1Q_2h+g\beta\tilde v,\\
		0 = Q_4F_2=Q_4h.
		\end{array}}\right.	
	\end{align*}
	Thus the explicitation of $\Xi$ via $\tilde Q$, $\tilde E^{\dagger}_1$ and $\tilde g$ is 
	$$
	\tilde \Sigma:\left\{ {\begin{array}{*{20}{l}}
		\dot x=E^{\dagger}_1F_1+g\alpha+\gamma h+g\beta\tilde v=f + \gamma h +g(\alpha +\beta\tilde v)=\tilde f+\tilde g\tilde v,\\
		\tilde y= \eta h=\tilde h.
		\end{array}}\right.	
	$$
	where $\gamma=E_1^{\dagger}Q^{-1}_1Q_2$, $\eta=Q_4$. Therefore, we can see {that} $\Sigma$ and $\tilde \Sigma$ are equivalent via the transformations  \red{of the form} (\ref{Eq:map}).
\end{proof}
\begin{proof}[Proof of Proposition \ref{Pro:exlsol}]
Consider the DAE (\ref{Eq:DAE6})  of the $(Q,v)$-explicitation procedure. Since $Q$-transformations preserve solutions of $\Xi$,   (\ref{Eq:DAE6}) resulting from a  $Q$-transformation of $\Xi$ has the same solutions as $\Xi$. Thus we need to prove that   (\ref{Eq:DAE6}) and  (\ref{Eq:proExpl}) have corresponding solutions for any choices of $E^{\dagger}_1$ and $g$. Moreover, the second equation $0=F_2(x)$ of (\ref{Eq:DAE6}) coincides  with $0=h(x)$ of (\ref{Eq:proExpl}).  So we only need to prove that  $x(t)\in \mathcal C^1$  is a solution of $E_1(x)\dot x=F_1(x)$
  if and only if there exists $v(t)\in\mathcal C^0$ such that $(x(t),v(t))$ is a solution of $\dot x=f(x)+g(x)v$  independently of the choice of $E^{\dagger}_1$, defining  $f(x)=E^{\dagger}_1(x)F_1(x)$,  and of the choice of $g$ satisfying ${\rm Im\,}g(x)=\ker E_1(x)$. 

\emph{If.} Suppose that  $(x(t),v(t))$ is a solution of $\dot x=f(x)+g(x)v$. Then we have $
\dot x(t)=f(x(t))+g(x(t))v(t)$.
Pre-multiplying the latter equation by $E_1(x(t))$, we get  that $$E_1(x(t))\dot x(t)=E_1(x(t))f(x(t))=E_1(x(t))E^{\dagger}_1(x(t))F_1(x(t))=F_1(x(t)),$$ which proves that \cyan{$x(t)$} is a solution of $E_1(x)\dot x=F_1(x)$. 

\emph{Only if.} Suppose that $x(t)$ is a solution of $E_1(x)\dot x=F_1(x)$. Rewrite $E_1(x)\dot x$ as 
$
\left[ \begin{smallmatrix}
E^1_1(x)&E^2_1(x)
\end{smallmatrix}\right] \left[ \begin{smallmatrix}
\dot x_1\\\dot x_2
\end{smallmatrix}\right] 
$, where $E^1_1: U\to \mathbb R^{q\times q}$ is smooth and $x= (x_1,x_2) $. Then, by taking a smaller neighborhood $U$, if necessary, we assume that   $E^1_1(x)$ is invertible locally around $x_p$ (if not, we permute the components of $x$ such that the first $q$ columns of $E_1(x)$ is  independent). Thus  a choice of  right inverse of $E_1$ is $E_1^{\dagger}= \left[ \begin{smallmatrix}
(E^1_1)^{-1}\\0
\end{smallmatrix}\right] $. So the maps $f$ and $g$ can be defined as
$
f:=E^{\dagger}_1 F_1=\left[ \begin{smallmatrix}
(E^1_1)^{-1}F_1\\0
\end{smallmatrix}\right]$, $
g:=\left[ \begin{smallmatrix}
-(E^1_1)^{-1}E_2\\I_{m}
\end{smallmatrix}\right]$. 
Set $v(t)=\dot x_2(t)$,  then $v\in \mathcal C^0$ and it is clear that if $x(t)=((x_1(t),x_2(t)))$ is a solution of $E_1(x)\dot x=F_1(x)$, then $(x(t), v(t))$ solves $\dot x=f(x)+g(x)v$   since
\begin{align*}
\left[ \begin{matrix}
E^1_1(x(t))&E^2_1(x(t))
\end{matrix}\right] \left[ \begin{matrix}
\dot x_1(t)\\\dot x_2(t)
\end{matrix}\right] =F_1(x(t)) \Rightarrow \dot x_1(t)= (E^1_1)^{-1}F_1(x(t)) -(E^1_1)^{-1}E^2_1(x(t))\dot x_2(t).
\end{align*}
Notice that if we choose another right inverse $\tilde E^{\dagger}_1$ of $E_1$ and another matrix $\tilde g$ such that ${\rm Im\,}\tilde g=\ker E_1$, then by Proposition \ref{Pro:NonDAEexpl},  we have
$$
\dot x=\tilde f(x)+\tilde g(x)\tilde v  \Leftrightarrow \dot x=  f(x)+   g(x)(\alpha(x)+\beta(x)v).
$$
We thus conclude that there exists $\tilde v(t)=\alpha(x(t))+\beta(x(t))v(t)=\alpha(x(t))+\beta(x(t))\dot x_2(t)$ such that $(x(t),\tilde v(t))$ solves $\dot x=\tilde f(x)+\tilde g(x)\tilde v$. Therefore, $\Xi$ has corresponding solutions with  any $(Q,v)$-explicitation $\Sigma$  independently of the choice of $Q$, $E^{\dagger}_1$ and $g$. 
\end{proof}
\begin{proof}[Proof of Theorem \ref{Thm:ex and sys}]
	By the assumptions that  ${\rm rank\,} {E(x)}=const.=q$ and \red{${\rm rank\,}{\tilde E(\tilde x)}=const.=\tilde q$} around $x_p$ and {$\tilde x_p$, respectively}, we have that $\Xi$ and {$\tilde \Xi$} are locally ex-equivalent to $$\Xi':\left[ {\begin{matrix}
		{{E_1}(x)}\\
		0
		\end{matrix}} \right]\dot x = \left[ {\begin{matrix}
		{{F_1}(x)}\\
		{{F_2}(x)}
		\end{matrix}} \right] \ \ \ \ {\rm and} \ \ \ \tilde \Xi':\left[ {\begin{matrix}
		{{{\tilde E}_1}\left( {\tilde x} \right)}\\
		0
		\end{matrix}} \right]\dot {\tilde x }= \left[ {\begin{matrix}
		{\tilde{F_1}\left( {\tilde x} \right)}\\
		{\tilde{F_2}\left( {\tilde x} \right)}
		\end{matrix}} \right],$$ respectively, where $E_1(x)$ and ${{{\tilde E}_1}\left( {\tilde x} \right)}$  are full row rank matrices and their ranks are \red{$q$ and $\tilde q$, respectively}. By Definition~\ref{Def:expl}, we have 
	\begin{align}\label{Eq:mtx_expl}
	\begin{array}{c}
	f(x)=E^{\dagger}_1(x)F_1(x), \ \ {\rm Im\,}g(x)=\ker E_1(x), \ \ h(x)=F_2(x),  \\ \tilde f(\tilde x)=\tilde E^{\dagger}_1(\tilde x)\tilde F_1(\tilde x), \ \ {\rm Im\,}\tilde g(\tilde x)=\ker \tilde E_1(\tilde x), \ \ \tilde h(\tilde x)=\tilde F_2(\tilde x).
	\end{array}
	\end{align}
	Note that the explicitation system is defined up to a feedback, an output multiplication and a generalized output injection.  {Any two control systems belonging to  $\mathbf{Expl}(\Xi)$ are sys-equivalent to each other and so are any two control systems belonging to $\mathbf{Expl}(\tilde \Xi)$}. Thus {the} choice of {an} explicitation system makes no difference for the proof of  sys-equivalence. Without loss of generality, we will use $f(x)$, $g(x)$, $h(x)$ and $\tilde f(x)$, $\tilde g(x)$, $\tilde h(x)$ given in (\ref{Eq:mtx_expl}) {for} the remaining part of this proof.
	
	\emph{If.} Suppose $\Sigma\mathop  \sim \limits^{sys}\tilde \Sigma$ { in a neighborhood $U$ of $x_p$}. By Definition \ref{Def:sys-equivalence}, there exists a diffeomorphism $\tilde x=\psi(x)$ and $\beta:U\rightarrow GL(m,\mathbb{R})$ such that $\tilde g\circ\psi = \frac{\partial \psi }{\partial x} g\beta$, \red{which implies 
	$$
	{\ker (\tilde E\circ\psi)={\rm span}\{\tilde g_1,...,\tilde g_m\}\circ\psi={\rm span}\left\{{ \frac{\partial \psi }{\partial x} }g_1,...,{ \frac{\partial \psi }{\partial x} }g_m\right\}={ \frac{\partial \psi}{\partial x} }\ker E}
	$$
and $q=\tilde q$ (since $\dim \ker \tilde E=\tilde m=m=\dim \tilde E$)}.	We can deduce from the above equation that there exists $Q_1:U\rightarrow GL(q,\mathbb{R}) $ such that
	\begin{align}\label{Eq:proof1}
	\tilde E_1\circ\psi =Q_1E_1\left( \frac{\partial \psi }{\partial x} \right)^{-1}.
	\end{align}
	Subsequently,  by $\tilde f\circ\psi = \frac{\partial \psi }{\partial x}(f+\gamma h+g\alpha)$ of Definition \ref{Def:sys-equivalence}, we have
	\begin{align*}
	{(\tilde E^{\dagger}_1\circ \psi)(\tilde F_1\circ \psi) =  \frac{\partial \psi }{\partial x} (E^{\dagger}_1F_1+\gamma F_2+g\alpha). }
	\end{align*}
	Pre-multiply the above equation by $\tilde E_1\circ\psi =Q_1E_1\left( \frac{\partial \psi }{\partial x} \right)^{-1}$, to {obtain}
	\begin{align}\label{Eq:proof2}
	\tilde F_1\circ\psi  =Q_1F_1+Q_1E_1\gamma F_2.
	\end{align}
	Then by $\tilde h\circ\psi =\eta h$ of Definition \ref{Def:sys-equivalence}, we immediately get
	\begin{align}\label{Eq:proof3}
	\tilde F_2\circ\psi =\eta F_2.
	\end{align}
	Now  combining (\ref{Eq:proof1}), (\ref{Eq:proof2}) and (\ref{Eq:proof3}), {we conclude} that $\Xi'$ and $\tilde \Xi'$ are ex-equivalent via $\tilde x=\psi(x)$ and $Q=\left[ {\begin{smallmatrix}
		Q_1&Q_1E_1\gamma \\
		0&\eta 
		\end{smallmatrix}} \right]$, which implies that $\Xi\mathop  \sim \limits^{ex}\tilde \Xi$ (since $\Xi\mathop  \sim \limits^{ex}\Xi'$ and $\tilde \Xi\mathop  \sim \limits^{ex}\tilde \Xi'$).
	
	\emph{Only if.} Suppose that locally $\Xi\mathop  \sim \limits^{ex}\tilde \Xi$ around $x_p$. It follows that  locally $\Xi'\mathop  \sim \limits^{ex}\tilde \Xi'$ around $x_p$, which implies that $q=\tilde q$.  Assume that they are ex-equivalent {via $Q: U\rightarrow GL(l,\mathbb{R})$ and $\tilde x =\psi(x)$ {defined on a neighborhood $U$ of $x_p$}. Let $Q=\left[ {\begin{smallmatrix}
			{{Q_1}}&{{Q_2}}\\
			{{Q_3}}&{{Q_4}}
			\end{smallmatrix}} \right]$,} where $Q_1$, $Q_2$, $Q_3$ and $Q_4$ are matrix-valued functions of {sizes} $q\times q$, $q\times m$, $p\times q$ and $p\times p$, respectively. Then by {$\left[ {\begin{smallmatrix}
			{{Q_1}}&{{Q_2}}\\
			{{Q_3}}&{{Q_4}}
			\end{smallmatrix}} \right]\left[ {\begin{smallmatrix}
			{{E_1}}\\
			0
			\end{smallmatrix}} \right] = \left[ {\begin{smallmatrix}
			\tilde E_1\circ\psi\\
			0
			\end{smallmatrix}} \right]\frac{\partial \psi }{\partial x}$,} we can deduce that $Q_3=0$ and $Q_1$, $Q_4$ are invertible matrices. Then we have
	$$
	\left[ {\begin{matrix}
		{{Q_1}}&{{Q_2}}\\
		0&{{Q_4}}
		\end{matrix}} \right]\left[ {\begin{matrix}
		{{E_1}}\\
		0
		\end{matrix}} \right] = \left[ {\begin{matrix}
		\tilde E_1\circ \psi\\
		0
		\end{matrix}} \right]\frac{\partial \psi }{\partial x}, \ \ \ \ \ \  \left[ {\begin{matrix}
		{{Q_1}}&{{Q_2}}\\
		0&{{Q_4}}
		\end{matrix}} \right]\left[ {\begin{matrix}
		F_1\\
		F_2
		\end{matrix}} \right] = \left[ {\begin{matrix}
		{\tilde{F_1}\circ \psi}\\
		{\tilde{F_2}\circ \psi}
		\end{matrix}} \right],
	$$ 
	which implies
	\begin{align}\label{Eq:relation1}
	{\tilde E_1\circ \psi=Q_1E_1\left( \frac{\partial \psi }{\partial x} \right)^{-1},\ \ \tilde F_1\circ \psi =Q_1F_1+Q_2F_2, \ \ \tilde F_2\circ \psi =Q_4F_2.}
	\end{align}
	Thus by  ${\rm Im\,}g(x)=\ker E(x)=\ker E_1(x)$ and  ${\rm Im\,}\tilde g(x)=\ker \tilde E(\tilde x)=\ker \tilde E_1(\tilde x)$, and {using} (\ref{Eq:relation1}), we have 
	\begin{align}\label{Eq:proof4}
	\tilde g\circ \psi =\frac{\partial \psi }{\partial x} g\beta
	\end{align}
	for some $\beta: U\rightarrow GL(m,\mathbb{R})$. Moreover, there exists $\alpha:U\rightarrow \mathbb R^m$ such that
	\begin{align}\label{Eq:proof5}
	\tilde f\circ \psi&=\tilde E^{\dagger}_1\circ \psi\tilde F_1\circ \psi\mathop=\limits^{(\ref{Eq:relation1})} \frac{\partial \psi }{\partial x} E^{\dagger}_1Q^{-1}_1 Q_1F_1+Q_2F_2= \frac{\partial \psi }{\partial x} E^{\dagger}_1Q^{-1}_1\left(Q_1F_1+Q_2F_2+Q_1E_1g\alpha\right)\nonumber\\&=  \frac{\partial \psi }{\partial x} \left(f+E^{\dagger}_1Q^{-1}_1Q_2y+g\alpha\right).
	\end{align}
	In addition, we have 
	\begin{align}\label{Eq:proof6}
	\tilde h\circ \psi  =\tilde F_2\circ \psi\mathop=\limits^{(\ref{Eq:relation1})}Q_4F_2=Q_4h.
	\end{align}
	Finally, it can be seen from (\ref{Eq:proof4}), (\ref{Eq:proof5}), and (\ref{Eq:proof6}) that $\Sigma\mathop  \sim \limits^{sys}\tilde \Sigma$ via $\tilde x=\psi(x)$, $\alpha$, $\beta$, $\gamma=E^{\dagger}_1Q^{-1}_1Q_2$ and $\eta=Q_4$.
\end{proof}
\begin{proof}[Proof of Proposition \ref{Pro:output zeroing vs invariant}]
	We first show that the {sequence of submanifolds} $M^c_k$ of the geometric \red{reduction} method  of the DAE $\Xi$ and {the sequence} $N^c_k$ of the zero dynamics {algorithm} of {any} control system $\Sigma=(f,g,h)\in \mathbf{Expl}(\Xi)$ locally coincide.   Suppose that ${\rm rank\,} E(x)=const.=q$ in a neighborhood $U_1$ of $x_p$. Then there always exists an invertible matrix $Q(x)$ defined on $U_1$ such that  $E_1(x)$ of
	$Q(x)E(x)=\left[ {\begin{smallmatrix}
		{{E_1}(x)}\\
		0
		\end{smallmatrix}} \right]$ is of full row rank $q$ for all $x\in U_1$, denote $Q(x)F(x)=\left[  {\begin{smallmatrix}
		F_1(x)\\
		F_2(x)
		\end{smallmatrix}} \right] $.
	Recall, \red{see} Remark \ref{rem:zerodynamic}, that $N_k$ of the zero dynamics algorithm  \red{are} well-defined for \cyan{any $\Sigma\in \mathbf{Expl}(\Xi)$}   and that $N_k$ are the same for all control systems belonging to $ \mathbf{Expl}(\Xi)$.
	So the choice of  \red{an} explicitation system makes no difference for $N_k$. We may choose a control system $ \Sigma=(f,g,h)\in\mathbf{Expl}(\Xi)$,  given by $f(x)=E_1^{\dagger }(x)F_1(x)$,  ${\rm Im\,} g(x)=\ker {E}(x)$, $h(x)=F_2(x)$. By the definition of $M_1$ (see (\ref{Eq:Mseq})) and $N_1=h^{-1}(0)$,  we have
	\begin{align*}
	M^c_1={M_1}\cap U_1 &= \left\{ {x \in U_1:Q(x)F(x) \in {\rm Im\,}Q(x)E(x)} \right\}=\left\{ x \in U_1:\left( {\begin{smallmatrix}
		F_1(x)\\
		F_2(x)
		\end{smallmatrix}} \right) \in {\rm Im\,}\left[ {\begin{smallmatrix}
		{{E_1}(x)}\\
		0
		\end{smallmatrix}} \right] \right\}\\&=\left\{ x \in U_1: F_2(x)=0 \right\}=\left\{ x \in U_1: h(x)=0 \right\}=N_1\cap U_1=N^c_1.
	\end{align*}
	For $k>1$, suppose $M^c_{k-1}=N^c_{k-1}$. Then by (\ref{Eq:Mseq}) and (\ref{Eq:Nseq}), we have
	\begin{align*}
	&{M_k} = \left\{ x \in M^c_{k-1}:Q(x)F(x) \in Q(x)E(x)T_xM^c_{k-1} \right\}=\left\{ x \in  M^c_{k-1}:\left( {\begin{smallmatrix}
		F_1(x)\\
		F_2(x)
		\end{smallmatrix}} \right) \in \left[ {\begin{smallmatrix}
		{{E_1}(x)}\\
		0
		\end{smallmatrix}} \right]T_xM^c_{k-1} \right\}\\
	&=\left\{ x \in  M^c_{k-1}:
	F_1(x) \in {{E_1}(x)}T_xM^c_{k-1} \right\}=\left\{ x \in M^c_{k-1}:f(x)+\ker E_1(x)\subseteq T_xM^c_{k-1}+\ker E_1(x)\right\}\\
	&=\left\{ {x \in N^c_{k-1}:f(x) \in {T_x N^c_{k-1}}}+\mathcal G(x)\right\}=N_k,
	\end{align*}
\red{and thus $M^c_k=N^c_k$.}	If either one {among} (A1) and (A2) \red{is satisfied}, {then} by  $N^c_k=M^c_k$, we can easily deduce the other one and thus  (A1) and (A2) \cyan{are equivalent}. {Then} by Proposition \ref{Pro:invariant manifold},  $M^*=M^c_{k^*}$ is a locally maximal invariant submanifold and by Proposition 6.1.1 of \cite{Isid95}, $N^*=N^c_{k^*}$ is a local maximal output zeroing submanifold. Moreover, we have locally $M^*=N^*$ (since locally $M^c_k=N^c_k$). 	Now  under the assumption that $\dim\, E(x)T_xM^*=const.$ for all $x\in M^*$ around $x_p$, by Theorem~\ref{Thm:1}, $\Xi$ is internally regular if and only if $\dim\, M^*=\dim\, E(x)T_xM^*$, i.e., $\ker E(x)\cap T_xM^*=0$, locally $\forall x\in M^*$ around $x_p$.  Thus by $N^*=M^*$ and $\ker E(x)=\red{\mathcal G(x)}$,  we have that $\Xi$ is internally regular (around $x_p$) if and only if $\red{\mathcal G(x_p)}\cap T_{x_p}N^*=0$. 	
\end{proof}
\subsection{Proof of Theorem \ref{Thm:EXWI}}\label{sec:prf_Thm3}
\begin{proof}
	$(i)\Rightarrow(ii)$: Suppose in a neighborhood $U$ of $x_p$ that ${\rm rank\,}E(x)=q$ and $\mathcal G(x)=\ker E(x)={\rm span} \{g_1(x),\ldots,g_{m}(x)\}$ is involutive,  where $g_1,\ldots,g_m$ are independent vector fields on $U$ and $m=n-q$. Then by the involutivity of $\mathcal G$, there exist local {coordinates} $\tilde x=(\tilde x_1, \tilde x_2)=\psi(x)$, {where $\tilde x_1=(\tilde x^1_1,\ldots,\tilde x^q_1)$ and $\tilde x_2=(\tilde x^1_2,\ldots,\tilde x^{n-q}_2)$,} such that {${\rm span}\left\lbrace {\rm d} \tilde x_1^1,\ldots,{\rm d} \tilde x^{q}_1 \right\rbrace={\rm span}\left\lbrace {\rm d}\tilde  x_1\right\rbrace  =\mathcal G^{\bot}$} (Frobenius theorem \cite{kobayashi1963foundations}), where $\mathcal G^{\bot}$ denotes the co-distribution which annihilates $\mathcal G$.  Note that in {the} $\tilde x$-coordinates, the distribution
	\begin{align*}
	\ker \tilde E(\tilde x)&=\ker \left( E(x)\left( \frac{\partial \psi(x)}{\partial x}\right) ^{-1}\right) =\frac{\partial \psi (x)}{\partial x}\mathcal G(x)  ={\rm span} \{\tilde g_1(\tilde x),...,\tilde g_{m}(\tilde x)\},
	\end{align*}
	where $\tilde g_i\circ \psi=\frac{\partial \psi }{\partial x}g_i$, $ i=1,...,m $.
	Now let $\tilde g$ be a matrix whose columns consist of $\tilde g_i$, for $i=1,...,m$. It follows that ${\rm rank\,} \tilde g(\tilde x)=m$ around $\tilde x_0=\psi(x_0)$. By $d \tilde x_1=\mathcal G^{\bot}$, we have $\left\langle {\rm d}\tilde x_1,\tilde g_i\right\rangle =0$, {for} $i=1,\ldots,m$. Thus $\tilde g(\tilde x)$ is of the form $\tilde g(\tilde x)=\left[ {\begin{smallmatrix}
		0\\
		{{\tilde g_2}\left( \tilde x \right)}
		\end{smallmatrix}} \right]$,  where {$\tilde g_2:\psi(U)\rightarrow\mathbb{R}^{{m}\times {m}}$}. Since  ${\rm rank\,}\tilde g(\tilde x)=m$,  it can be seen that $\tilde g_2(\tilde x)$ is an invertible matrix, which implies by   $\im \tilde g(\tilde x)=\ker \tilde E(\tilde x)$  that $\tilde {E}(\tilde x)$ has to be of the form  $\tilde {E}(\tilde x)=\left[ {\begin{smallmatrix}
		{{{\tilde E}_1}\left( {\tilde x} \right)}&0
		\end{smallmatrix}} \right]$, where ${\tilde E}_1:\psi(U)\rightarrow\mathbb{R}^{l\times {m}}$. Thus in the {$\tilde x$-coordinates}, $\tilde \Xi=(\tilde E,\tilde F)$ admits the following form:
	\begin{align*}
	\left[ {\begin{matrix}
		{{{\tilde E}_1}\left( {\tilde x} \right)}&0
		\end{matrix}} \right]\left[ {\begin{matrix}
		{{\dot{ \tilde x}_1}}\\
		{{\dot{ \tilde x}_2}}
		\end{matrix}} \right] = \tilde F\left( {\tilde x} \right).
	\end{align*}
where $\tilde F\circ \psi=F$.	Now by ${\rm rank\,}{ E(x)}=q$, we get ${\rm rank\,}\left[ {\begin{matrix}
		{{{\tilde E}_1}\left( {\tilde x} \right)}&0
		\end{matrix}} \right]={\rm rank\,}{ E(x)}=q$ (the coordinate transformation preserves the rank). Thus  there exists $Q: \psi(U)\rightarrow GL(l,\mathbb{R})$ such that $Q(\tilde x)\tilde E(\tilde x)=Q(\tilde x)\left[ {\begin{matrix}
		{{{\tilde E}_1}\left( {\tilde x} \right)}&0
		\end{matrix}} \right]=\left[ {\begin{smallmatrix}
		{\tilde E_1^1\left( {\tilde x} \right)}&0\\
		0&0
		\end{smallmatrix}} \right]$, where $	\tilde E_1^1:\psi(U)\rightarrow\mathbb{R}^{q\times q}$. Since $Q(\tilde x)$ preserves the rank of $\tilde E(\tilde x)$, we have ${\rm rank\,}{\tilde E_1^1\left( {\tilde x} \right)}=q$. Therefore, ${\tilde E_1^1\left( {\tilde x} \right)}$ is an invertible matrix. Now let $Q'(\tilde x) = \left[ {\begin{smallmatrix}
		{{{\left( {\tilde E_1^1\left( {\tilde x} \right)} \right)}^{ - 1}}}&0\\
		0&{{I_{m}}}
		\end{smallmatrix}} \right]Q(\tilde x)$ and denote $Q'(\tilde x)\tilde F(\tilde x) = \left[ {\begin{smallmatrix}
		{{F_1}\left( {\tilde x} \right)}\\
		{{F_2}\left( {\tilde x} \right)}
	\end{smallmatrix}} \right]$. It is seen that, via $\tilde x =\psi(x)$ and $Q'$, $\Xi$ is locally ex-equivalent to $\tilde \Xi=(Q'\tilde E, Q'\tilde F)$, where $Q'\tilde E\circ\psi=Q'E(\frac{\partial \psi }{\partial x})^{-1}=\left[ {\begin{smallmatrix}
		{{I_q}}&0\\
		0&0
		\end{smallmatrix}} \right]$. {Clearly}, $\tilde \Xi$ is a semi-explicit DAE.
	
	$(ii)\Rightarrow (iii)$: Suppose that $\Xi$ is locally  ex-equivalent to $\Xi^{SE}$ of \cyan{the form}  (\ref{Eq:DAE1}) around $x_p$. Then, any control system $\Sigma\in\textbf{Expl}(\Xi) $ is locally sys-equivalent to $\Sigma'\in\textbf{Expl}(\Xi^{SE})$ below (by Theorem~\ref{Thm:1}):
	$$
	\Sigma':\left\lbrace  {\begin{array}{c@{\ }l}
		\left[ {\begin{smallmatrix}
			{{{\dot {x}}_1}}\\
			{{{\dot { x}}_2}}
			\end{smallmatrix}} \right] &= \left[ {\begin{smallmatrix}
			{{F_1}(x_1,x_2)}\\
			0
			\end{smallmatrix}} \right] + \left[ {\begin{smallmatrix}
			0\\
			{{I_{m}}}
			\end{smallmatrix}} \right]v,\\
		y &=  F_2(x_1,x_2).
		\end{array}} \right.
	$$
	{Suppose that  $\Sigma\mathop  \sim \limits^{sys} \Sigma'$ via $z=(z_1,z_2)=\psi(x)$, $\alpha$, $\beta$ and $\gamma=\left[ {\begin{smallmatrix}
			\gamma_1\\
			\gamma_2
			\end{smallmatrix}} \right]$}, then
	$$
	\Sigma:\left\lbrace  {\begin{aligned} 
		\left[ {\begin{smallmatrix}
			{{{\dot {z}}_1}}\\
			{{{\dot { z}}_2}}
			\end{smallmatrix}} \right] &= \frac{\partial \psi(x)}{\partial x}\left( \left[ {\begin{smallmatrix}
			{{F_1}(x)}\\
			0
			\end{smallmatrix}} \right] + \left[ {\begin{smallmatrix}
			\gamma_1(x)\\
			\gamma_2(x)
			\end{smallmatrix}} \right]y+\left[ {\begin{smallmatrix}
			0\\
			{{I_{m}}}
			\end{smallmatrix}} \right](\alpha(x)+\beta(x)\tilde v) \right), \\
		\tilde y &=  \eta(x) F_2(x).
		\end{aligned}}  \right.
	$$
	{By Definition \ref{Def:reduce}, $\Sigma$ can  always be fully reduced to (by a coordinates change and a feedback transformation)
		$$
		\left\lbrace  {\begin{array}{c@{\,}l}
			\dot x_1
			&=  F_1(x_1,x_2) + \gamma_1(x_1,x_2)F_2(x_1,x_2),\\
			y &=  \eta(x_1,x_2) F_2(x_1,x_2),
			\end{array}}  \right.
		$$
		where $x_2$ is the new control.}

	$(iii)\Rightarrow(i)$: Suppose $(iii)$ {holds}. Then  $\mathbf{Expl}(\Xi)$ is not empty {implies} that   $E(x)$ has constant rank around $x_p$. By Definition \ref{Def:reduce}, any control system ${\Sigma\in \mathbf{Expl}(\Xi)}$ can be fully reduced implies $\red{\mathcal G}=\ker E(x)={\rm span}\left\lbrace g_1,...,g_{m}\right\rbrace $ is involutive.
\end{proof}
\subsection{Proof of Theorem \ref{Thm:NWF}}\label{sec:prf_Thm4}
\begin{claim}\label{Cl:claim1}
	If assumptions (A1)-(A3) of Theorem \ref{Thm:NWF} are satisfied, then {the point} $x_p$ is a regular point of the zero dynamics algorithm (rank conditions (i), (ii), (iii) of Proposition 6.1.3 of \cite{Isid95} are satisfied) for any control system $\Sigma\in \mathbf{Expl}(\Xi)$. If so, we use Proposition 6.1.5 of \cite{Isid95} with a small modification: there exist local coordinates $(z,z^*)=(z_1,\ldots,z_m,z^*)$, where $z_i=(z^1_i,\ldots,z^{\rho_i}_i)$, such that $\Sigma$ is the following form
\end{claim}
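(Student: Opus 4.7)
The plan is to reduce the claim to the correspondence $M^c_k = N^c_k$ established in Proposition \ref{Pro:output zeroing vs invariant}, then verify that the assumptions (A1)--(A3) of Theorem \ref{Thm:NWF} translate stepwise into the regularity hypotheses of Proposition 6.1.3 of \cite{Isid95}, and finally invoke Proposition 6.1.5 of \cite{Isid95} to obtain the coordinate form.

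First, since Remark \ref{rem:zerodynamic}(ii) shows that the sequence $N^c_k$ is independent of the choice of $\Sigma \in \mathbf{Expl}(\Xi)$, we may fix a particular explicitation. By Proposition \ref{Pro:output zeroing vs invariant} we have $M^c_k = N^c_k$ locally around $x_p$ at every step. Assumption (A2) of Theorem \ref{Thm:NWF}, namely $\dim E(x) T_x M^c_k = const.$ for $1\le k\le k^*$, together with $\rk E(x)=q$ being constant, yields via the identity $\dim T_x M^c_k = \dim(\ker E(x)\cap T_x M^c_k) + \dim E(x) T_x M^c_k$ that $\dim(\mathcal G(x)\cap T_x N^c_k)$ is constant along $N^c_k$ near $x_p$; this is precisely the constant-rank hypothesis of Proposition 6.1.1 of \cite{Isid95}. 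Assumption (A1) of Theorem \ref{Thm:NWF} supplies the remaining rank conditions that guarantee each $N^c_k$ is a smooth embedded connected submanifold, via the implicit function theorem applied to the stacked differentials $[\rD H_{k-1}^T, \rD \tilde F^{2T}_k]^T$.

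Next, assumption (A3), namely $l=n$ and $r^*=n^*$, combined with $\ker E(x)=\mathcal G(x)$, forces $\mathcal G(x_p)\cap T_{x_p} N^*=0$, which is the non-degeneracy condition (6.4) of \cite{Isid95}. Moreover $l=n$ gives $p=l-q=n-q=m$, so the explicitation is \emph{square}, and the decoupling matrix associated with the zero-dynamics algorithm is forced to be invertible precisely because the unique feedback rendering $N^*$ controlled-invariant, whose existence is equivalent to internal regularity by Proposition \ref{Pro:output zeroing vs invariant}, cannot coexist with any nontrivial intersection $\mathcal G\cap TN^*$. These facts together assert that $x_p$ is a regular point of the zero dynamics algorithm, establishing the first half of the claim.

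Finally, with regularity in hand, Proposition 6.1.5 of \cite{Isid95} produces local coordinates $(z_1,\ldots,z_m,z^*)$, with each $z_i = (z_i^1,\ldots,z_i^{\rho_i})$, together with a feedback bringing $\Sigma$ into the announced zero-dynamics form (\ref{Eq:ZD form}). The \emph{small modification} is the notational bookkeeping: in \cite{Isid95} the form is stated after a decoupling feedback has been absorbed and after fixing a particular output, whereas any member of $\mathbf{Expl}(\Xi)$ is defined only up to the gauge transformations of Proposition \ref{Pro:NonDAEexpl} (feedback, output multiplication, generalized output injection), all of which preserve the zero-dynamics structure by Remark \ref{rem:zerodynamic}(i). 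The main obstacle I foresee is matching the indices $\rho_i$ emerging from Isidori's construction with the successive dimension drops $\dim M^c_{k-1} - \dim M^c_k$ produced by our geometric reduction algorithm, so that the block sizes in \textbf{NWF1} genuinely record the relative-degree structure; this is a combinatorial verification, carried out by induction on the step $k$ and tracking which output components first leave $N^c_{k-1}$.
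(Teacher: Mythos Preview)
Your overall approach matches the paper's: use $M^c_k=N^c_k$ from Proposition~\ref{Pro:output zeroing vs invariant}, then translate (A1)--(A3) stepwise into conditions (i)--(iii) of Proposition~6.1.3 in \cite{Isid95}, and finally invoke Proposition~6.1.5. Your dimension-formula argument for (A2) $\Rightarrow$ constancy of $\dim(\mathcal G\cap T_xN^c_k)$ and your reading of (A3) $\Rightarrow$ $\mathcal G(x_p)\cap T_{x_p}N^*=0$ are exactly what the paper does. The paper is slightly more explicit about (A1): it writes $N^c_k=\{H_{k-1}=0,\ R_{k-1}L_fH_{k-1}=0\}$ with $R_{k-1}$ annihilating $\im L_gH_{k-1}$, then uses the assumed constancy of $\rk[\rD H_{k-1}^T,\rD\tilde F_k^{2T}]^T$ together with $N^c_k=M^c_k$ to conclude constant rank of the corresponding differential in Isidori's description; your sentence ``(A1) supplies the remaining rank conditions via the implicit function theorem'' is correct but would benefit from naming this identification.

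Two points where your write-up drifts from what the claim actually requires. First, your description of the ``small modification'' is off: it is not about the gauge freedom of $\mathbf{Expl}(\Xi)$. The modification is structural, spelled out in the Remark immediately following the claim: compared to Isidori's zero-dynamics form, here the functions $\sigma_1^1,\ldots,\sigma_1^{\rho_1-1}$ are retained (vanishing only on the respective $N_k$), while conversely $\delta^j_{i,s}\equiv 0$ for $1\le j<\rho_s$. Second, the ``main obstacle'' you anticipate---matching the $\rho_i$ against the dimension drops $\dim M^c_{k-1}-\dim M^c_k$---is not part of this claim at all. Claim~\ref{Cl:claim1} only asserts regularity of $x_p$ and the existence of the coordinate form~(\ref{Eq:ZD form}); the block sizes $\rho_i$ come directly out of Proposition~6.1.5, and the further manipulation into \textbf{NWF1} is the separate business of the proof of Theorem~\ref{Thm:NWF}.
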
 
\vspace{-0.7cm}
\begin{footnotesize}
	\begin{align}\label{Eq:ZD form}
	\left\lbrace \begin{array}{cl}
	y_1&= z _1^1 \\
	\dot z _1^1 &= z _1^2 + \sigma _1^1v\\
	&\cdots \\
	\dot z _1^{{\rho_1} - 1} &= z _1^{{\rho _1}} + \sigma _1^{{\rho_1} - 1}v\\
	\dot z _1^{{\rho_1}} &= {\alpha_1} + {\beta_1}v\\
		y_2&= z _2^1\\
	\dot z _2^1 &= z _2^2 + \delta_{2,1}^{1}\left( {{\alpha_1} + {\beta_1}v} \right) + \sigma _2^1v\\
	&\cdots \\
	\dot z _2^{{\rho_2} - 1} &= z _1^{{\rho_2}} + \delta_{2,1}^{\rho_2-1}\left( {{\alpha_1} + {\beta_1}v} \right) + \sigma _2^{{\rho_2} - 1}v\\
	\dot z _1^{{\rho_2}} &= {\alpha_2} + {\beta_2}v
	\end{array} \ \ \ \begin{array}{ll} 
	&\vdots\\
		y_i&= z _i^1, \ \ \ i=3,\ldots,m\\
	\dot z _i^1 &= z _i^2 + \sum\limits_{s = 1}^{i - 1} {\delta_{i,s}^{1}\left( {{\alpha_s} + {\beta_s}v} \right)}  + \sigma _i^1v\\
	&\cdots \\
	\dot z _i^{{\rho_i} - 1}& =z _i^{{\rho_i}} + \sum\limits_{s = 1}^{i - 1} {\delta_{i,s}^{\rho_i-1}\left( {{\alpha_s} + {\beta_s}v} \right)}  + \sigma _i^{{\rho_i} - 1}v\\
	\dot z _i^{{\rho_i}} &= {\alpha_i} + {\beta_i}v\\
	&\vdots \\
	\dot z^*&= f^*(z,z^*)+ g^*(z,z^*)v.
	\end{array} \right. 
	\end{align} 
		\end{footnotesize}
	where 	\blue{$\delta^j_{i,s}\equiv0$ for $1\le j<\rho_{s}$, $1\le s\le i-1$}.
\begin{rem}
	(i)	Note that in  (\ref{Eq:ZD form}),  $\rho_1\le \rho_2\le\ldots\le \rho_m$ and the matrix $\beta= (\beta_1,\ldots,\beta_m)$ is invertible at $x_p$. The functions $\sigma_k$ satisfy {$\sigma^{k}|_{N_{k}}=0$ for $k=1,\ldots,\rho_i-1$, where $$N_{k}=\{(z,z^*):z_i^j=0, \ 1\le i\le m,\ 1\le j\le k\}.$$}
	(ii) \blue{There are two differences between {system} (\ref{Eq:ZD form}) and the zero dynamics form of Proposition 6.1.3 of \cite{Isid95}, {where} the functions $\sigma^1_1,\ldots, \sigma_1^{\rho_1-1}$ are not present and all the functions $\delta^j_{i,s}$ can be nonzero. However, in (\ref{Eq:ZD form}), $\sigma^1_1,\ldots, \sigma_1^{\rho_1-1}$ vanish on $N_1,\ldots, N_{\rho_1 -1}$, respectively, but may not outside, and   $\delta^j_{i,s}\equiv0$ for $1\le j<\rho_{s}$, $1\le s\le i-1$.}
\end{rem}

\begin{proof}[Proof of Claim \ref{Cl:claim1}]
	We will prove that assumptions (A1), (A2), (A3) of Theorem \ref{Thm:NWF} correspond to the rank conditions (i), (ii), (iii) of Proposition 6.1.3 in \cite{Isid95}.  By the assumption of Theorem~\ref{Thm:NWF} that  $\rk E(x)=const.$ around $x_p$, we have $\textbf{Expl}(\Xi)$ {is not empty}. Now, in order to compare the two algorithms (the geometric reduction algorithm of Appendix for $\Xi$ and the zero dynamics algorithm  in \cite{Isid95}  for  $\Sigma \in \mathbf{Expl}(\Xi)$), we use the same notations as {in} the  algorithm of Appendix. 
	
	Then for a control system $\Sigma=(f,g,h)\in \mathbf{Expl}(\Xi)$, we have $f(x)=(\tilde E_1^1)^{\dagger}\tilde F_1^1(x)$, ${\rm Im\,}g(x)=\ker E(x)=\ker \tilde \delta^1_1(x)$, $h(x)=\tilde F_1^2(x)$. 
	The zero dynamics algorithm for $\Sigma$ can be implemented in the following way:
	
	Step 1: by (A1) of Theorem \ref{Thm:NWF}, we get ${\rm D}h(x)= {\rm D}\tilde F_1^2(x)$ has constant rank $n-n_1$ around $x_p$ (condition (i) of Proposition 6.1.3 in \cite{Isid95}). Thus $h^{-1}(0)$ can be locally expressed as $N^c_1=\{x:H_1(x)=0\}$, where $H_1=\psi_1(x)=(\psi_1^1,...,\psi_1^{n-n_1})$. 
	
	Step $k$ ($k>1$): By the proof of Proposition \ref{Pro:output zeroing vs invariant}, we have $N^c_{k-1}=M^c_{k-1}$, which is
	\begin{align*}
	N^c_{k-1}=M^c_{k-1}=\{x:H_{k-1}(x)=0\},	
	\end{align*}
	where $H_{k-1}= (\psi_0,\ldots,\psi_{k-1})$.  
	By the zero dynamic algorithms, $N_k$ \cyan{consists of} all $x\in N^c_{k-1}$ such that 
	$$L_fH_{k-1}(x)+L_gH_{k-1}(x)u=0.$$
	Then by {assumption (A2) of} Theorem \ref{Thm:NWF}, we can deduce that \begin{align}\label{Eq:pfconstrank1}
	\dim\,	(\ker E\cap \ker dH_{k-1})(x)=\dim\,({\rm span}\{g_1,\ldots,g_m\}\cap \ker dH_{k-1})(x)=const.,
	\end{align}
	{for all $x\in M^c_{k-1}$ around $x_p$}. Now by $\dim\,\ker E(x)=const.$ around $x_p$ (implied by $\rk E(x)=const.$), we get 
	\begin{align}\label{Eq:pfconstrank2}
	\dim\,{\rm span}\{g_1,\ldots,g_m\}(x)=const.
	\end{align}
	locally around $x_p$. By (\ref{Eq:pfconstrank1}) and (\ref{Eq:pfconstrank2}), we get ${\rm rank\,}L_gH_{k-1}(x)=const.$ for all $x\in M^c_{k-1}$ around $x_p$ (condition (ii) of Proposition 6.1.3 in \cite{Isid95}).

	Since  $\rk L_gH_{k-1}(x)=const.$, there exists a basis matrix $R_{k-1}(x)$ of the annihilator of the image of $L_gH_{k-1}(x)$, that is $R_{k-1}(x)L_gH_{k-1}(x)=0$. Thus $N^c_k$ can be defined by \[N^c_k=\{x\in U_k:H_{k-1}(x)=0, \  R_{k-1}(x)L_fH_{k-1}(x)=0\}.\] Notice that by the geometric reduction algorithm, we have \[M^c_{k}=\{x\in U_k:H_{k-1}(x)=0, \ \ \ \tilde  F^2_k(x)=0\}.\] By $N^c_{k}=M^c_{k}$ and the fact that ranks of the differential  of $(H_{k-1}(x),\tilde F^2_k(x))$ are constant for all $x$ around $x_p$ (assumption (A1) of Theorem \ref{Thm:NWF}),  it follows that the rank of the differential  of $\left[ {\begin{smallmatrix}
		{{H_{k - 1}}(x)}\\
		{{R_{k - 1}}(x){L_f}{H_{k - 1}}(x)}
		\end{smallmatrix}} \right]$ is constant around $x_p$ (condition (i) of Proposition 6.1.3 in \cite{Isid95}).
	
Assumption (A3) of Theorem \ref{Thm:NWF} that $\dim\, E(x)T_{x}M^*=\dim\, M^*$ locally around $x_p$ implies 
	$${\rm span}\left\lbrace  g_1(x_p),\ldots,g_m(x_p)\right\rbrace \cap T_{x_p}N^*=0.$$ Finally, by $N^*=\{x: H_{k^*}(x)=0\}$,
	it follows that the matrix ${L_g}H_{k^*}  (x_p)  $ has rank $m$ (condition (iii) of Proposition 6.1.3 in \cite{Isid95}). 
\end{proof}
\begin{proof} [Proof of Theorem \ref{Thm:NWF}]
	Observe that by assumption (A3) and Theorem \ref{Thm:1}(iii), we have that $\Xi$ is internally regular. Then by Claim \ref{Cl:claim1}, we have $x_p$ is a regular point of the zero dynamics algorithm for any control system $\Sigma\in \mathbf{Expl}(\Xi)$.  Thus there exist local coordinates $(z,z^*)$ such that  $\Sigma$ is in the form  (\ref{Eq:ZD form}) around $x_p$.  Notice that the matrix $\beta=(\beta_1,\ldots,\beta_m)$ is invertible at $x_p$ and the functions $\sigma_i^{k}|_{N^c_k}=0$ for $1\le i\le m$, $1\le k\le \rho_i-1$, which implies $\sigma_i^{k}\in \mathbf{I}^k$,  where $\mathbf{I}^k$ is the ideal generated by $z_i^j$, $1\le i\le m$, $1\le j\le k$ in the ring of smooth functions of $z^a_b$ and $z^*_c$. Then for system (\ref{Eq:ZD form}), using the feedback transformation $\tilde v=\alpha+\beta v$, where $\alpha= (\alpha_1,\ldots,\alpha_m)$, we get  
	\begin{align}\label{Eq:prf1}
	   \left\lbrace \begin{array}{c@{ }l}
	y_i&= z _i^1, \ \ \ \ \ i=1,\ldots,m,\\
	\dot z _i^1 &=z _i^2 + \sum\limits_{s = 1}^{i - 1} {\delta_{i,s}^{1}\tilde v_s }  + a^1_i+b^1_i\tilde v,\\
	&\cdots \\
	\dot z _i^{{\rho_i} - 1}& =z _i^{{\rho_i}} + \sum\limits_{s = 1}^{i - 1} {\delta_{i,s}^{{\rho_i} - 1}\tilde v_s}  +  a^{\rho_i-1}_i+b^{\rho_i-1}_i\tilde v,\\
	\dot z _i^{{\rho_i}} &= \tilde v_i,\\ 
	\dot z^*&= \tilde f^*(z,z^*)+ \tilde G^*(z,z^*)\tilde v,
	\end{array}\right.
	\end{align} 
where $\tilde f^*=f^*-\bar g\beta^{-1}\alpha$, $\tilde G^*=g^*\beta^{-1}$, and where $a^{k}_{i}=-\sigma _i^k\beta^{-1}\alpha$, $b^{k}_{i}=\sigma _i^k\beta^{-1}$, for $1\le i\le m$, $1\le k\le\rho_i-1$ and by $\sigma_i^{k}\in \mathbf{I}^k$, we have  $a^k_i,b^k_{i,s}\in \mathbf{I}^k$.

 Recall from (\ref{Eq:ZD form}) that the functions  $\delta^j_{i,s}\equiv0$ for $1\le j<\rho_{s}$, $1\le s\le i-1$. Then if the function $\delta^j_{i,\bar s}\neq  0$,  $j=\rho_{\bar s}+k$, for a certain $1\le \bar s\le i-1$ and a certain $0\le k\le \rho_i-1-\rho_{\bar s}$, we  show that, via suitable changes   of coordinates and   output  multiplications, the nonzero function  $\delta^{k+\rho_{\bar s}}_{i,\bar s}$ can be eliminated. Namely, define the new coordinates (and keep the remaining coordinates unchanged):
 	$$
 	\tilde z^{k+1}_i=z^{k+1}_i- \delta^{\rho_{\bar s}+k}_{i,\bar s}z^1_{\bar s}, ~ ~ ~  \tilde z^{k+2}_i=z^{k+2}_i-\delta^{\rho_{\bar s}+k}_{i,\bar s}z^2_{\bar s}, ~ ~ ~ \ldots, ~ ~ ~ \tilde z^{k+\rho_{\bar s}}_i=z^{k+\rho_{\bar s}}_i-\delta^{\rho_{\bar s}+k}_{i,\bar s}z^{\rho_{\bar s}},
 	$$
 we have (notice that below $\delta^1_{\bar s,s}\equiv0$ for $1\le s \le \bar s-1$)
 $$
 \begin{aligned}
 \dot {\tilde z}^{k+1}_i&=z^{k+2}_i+\sum\limits_{s = 1}^{i - 1} {\delta_{i,s}^{k+1}\tilde v_s}+ a^{k+1}_i+b^{k+1}_i\tilde v-(\delta^{\rho_{\bar s}+k}_{i,\bar s})'z^1_{\bar s}-\delta^{\rho_s+k}_{i,s}(z^2_{\bar s}+a^1_{\bar s}+b^1_{\bar s}\tilde v+\sum\limits_{s = 1}^{\bar s - 1} {\delta_{\bar s,s}^{1}\tilde v_s})\\
 &=(z^{k+2}_i-\delta^{\rho_{\bar s}+k}_{i,\bar s}z^2_{\bar s})+(a^{k+1}_i-(\delta^{\rho_{\bar s}+k}_{i,\bar s})'z^1_{\bar s}-\delta^{\rho_s+k}_{i,s}a^1_{\bar s})+(b^{k+1}_i-\delta^{\rho_s+k}_{i,s}b^1_{\bar s})\tilde v+\sum\limits_{s = 1}^{i - 1} {\delta_{i,s}^{k+1}\tilde v_s}\\
& =\tilde z^{k+2}_i+\tilde a^{k+1}_i+\tilde b^{k+1}_i \tilde v+\sum\limits_{s = 1}^{i - 1} {\delta_{i,s}^{k+1}\tilde v_s},
 \end{aligned}
 $$	
where $(\delta^{\rho_{\bar s}+k}_{i,\bar s})'$ denotes the derivative of $\delta^{\rho_{\bar s}+k}_{i,\bar s}(x(t))$ with respect to $t$, and $\tilde a^{k+1}_i=a^{k+1}_i-(\delta^{\rho_{\bar s}+k}_{i,\bar s})'z^1_{\bar s}-\delta^{\rho_s+k}_{i,s}a^1_{\bar s}$, $\tilde b^{k+1}_i=b^{k+1}_i-\delta^{\rho_s+k}_{i,s}b^1_{\bar s}$, and it is clear that $\tilde a^{k+1}_i,\tilde b^{k+1}_{i,l}\in \mathbf{I}^{k+1}$. Then via similar calculations, we have  
 $$ 
\dot {\tilde z}^{k+j}_i=\tilde z^{k+j+1}_i+\tilde a^{k+j}_i+\tilde b^{k+j}_i \tilde v+\sum\limits_{s = 1}^{i - 1} {\delta_{i,s}^{k+j}\tilde v_s}, \ \ \ 2\le j\le \rho_{\bar s}-1, 
 $$
 for some $\tilde a^{k+j},\tilde b^{k+j}_{i,l}\in \mathbf{I}^{k+j}$. Moreover, we have
 $$
\begin{aligned}
\dot {\tilde z}^{k+\rho_{\bar s}}_i&=z^{k+\rho_{\bar s}+1}_i+\sum\limits_{s = 1}^{i - 1} {\delta_{i,s}^{k+\rho_{\bar s}}\tilde v_s}+ a^{k+\rho_{\bar s}}_i+b^{k+\rho_{\bar s}}_i\tilde v-(\delta^{\rho_{\bar s}+k}_{i,\bar s})'z^{\rho_{\bar s}}_{\bar s} -\delta^{\rho_{\bar s}+k}_{i,{\bar s}}\tilde v_{\bar s}\\
&=z^{k+\rho_{\bar s}+1}_i+(a^{k+\rho_{\bar s}}_i-(\delta^{\rho_{\bar s}+k}_{i,\bar s})'z^{\rho_{\bar s}}_{\bar s})+ b^{k+1}_i \tilde v+\sum\limits_{s = 1}^{i - 1} {\delta_{i,s}^{k+\rho_{\bar s}}\tilde v_s}-\delta^{k+\rho_{\bar s}}_{i,{\bar s}}\tilde v_{\bar s}\\
& =  z^{k+\rho_{\bar s}+1}_i+\tilde a^{k+\rho_{\bar s}}_i+\tilde b^{k+\rho_{\bar s}}_i \tilde v+\sum\limits_{s = 1}^{\bar s - 1} {\delta_{i,s}^{k+\rho_{\bar s}}\tilde v_s}+\sum\limits_{s = \bar s + 1}^{i-1} {\delta_{i,{\bar s}}^{k+\rho_{\bar s}}\tilde v_s},
\end{aligned}
$$	
 where the functions $\tilde a^{k+\rho_{\bar s}},\tilde b^{k+\rho_{\bar s}}_{i,l}\in \mathbf{I}^{k+\rho_{\bar s}}$. Thus in the above formula, the nonzero function $\delta^{k+\rho_{\bar s}}_{i,{\bar s}}$ is eliminated. Note that if $k=0$, then the change of coordinate $\tilde z^{1}_i=z^1_i- \delta^{\rho_{\bar s}}_{i,\bar s}z^1_{\bar s}$ transforms the first equation $y_i=z^1_i$ of (\ref{Eq:prf1}) into $y_i=\tilde z^{1}_i+ \delta^{\rho_{\bar s}}_{i,\bar s}z^1_{\bar s}$. We define a new output $\tilde y_i=y_i-\delta^{\rho_{\bar s}}_{i,\bar s}z^1_{\bar s}=y_i-\delta^{\rho_{\bar s}}_{i,\bar s}y_{\bar s}$ (which is actually an output multiplication \cyan{of the form $\tilde y_i=\eta_iy$}) such that the first equation of (\ref{Eq:prf1}) becomes $\tilde y_i=\tilde z^{1}_i$. 
 
Repeat the above construction to eliminate all nonzero functions   $\delta^j_{i,s}$ for $ j\ge \rho_{s}$, $1\le s\le i-1$. Then  system (\ref{Eq:prf1}) becomes the following control system  
 $$
 \tilde \Sigma:\left\lbrace \begin{array}{c@{ }l}
 \tilde y_i&=  \tilde z _i^1, \ \ \ \ \ i=1,\ldots,m,\\
 \dot  {\tilde z} _i^1 &=\tilde z _i^2 +    \tilde a^1_i+ \tilde b^1_i\tilde v,\\
 &\cdots \\
 \dot  {\tilde z} _i^{{\rho_i} - 1}& = \tilde z _i^{{\rho_i}}    +   \tilde a^{\rho_i-1}_i+ \tilde b^{\rho_i-1}_i\tilde v,\\
 \dot  {\tilde z} _i^{{\rho_i}} &= \tilde v_i,\\ 
 \dot z^*&= \tilde f^*(z,z^*)+ \tilde G^*(z,z^*)\tilde v.
 \end{array}\right.
$$
where  $a^k_i,b^k_{i,s}\in \mathbf{I}^k$ for  $1\le k\le\rho_i-1$. It is clear that $\Sigma\mathop  \sim \limits^{sys}\tilde \Sigma$ (we used coordinates changes, feedback transformations and output multiplications to transform $\Sigma$ into $\tilde \Sigma$). Then consider the last row of every subsystem of $\tilde \Sigma$, which is $\dot z _i^{{\rho_i}} = \tilde v_i$. By deleting this equation in every subsystem and setting $y_i=0$ for $i=1,\ldots,m$, and replacing the vector $\tilde v$ by $\dot z ^{{\rho}}$, we transform $\tilde \Sigma$ into a DAE $\tilde \Xi$ below.  It is  \cyan{straightforward} to see that $\tilde \Sigma\in \mathbf{Expl}({\tilde \Xi})$.
	$$ 
	\tilde \Xi:	\left\{ \begin{array}{ccl}
	\left[ {\begin{matrix}
		0&{}&{}&{}\\
		1& \ddots &{}&{}\\
		{}& \ddots & \ddots &{}\\
		{}&{}&1&0
		\end{matrix}} \right]\left[ {\begin{matrix}
		{\dot {\tilde z} _i^1}\\
		{\dot {\tilde z} _i^2}\\
		\vdots \\
		{\dot {\tilde z} _i^{{\rho_i}}}
		\end{matrix}} \right]  &=& \left[ {\begin{matrix}
		{{\tilde z} _i^1}\\
		{{\tilde z} _i^2}\\
		\vdots \\
		{{\tilde z} _i^{{\rho_i}}}
		\end{matrix}} \right] + \left[ {\begin{matrix}
		0\\
		\tilde a^1_i+\tilde b^1_i\dot {\tilde z}^{\rho}\\
		\vdots \\
		\tilde a^{\rho_i-1}_i+\tilde b^{\rho_i-1}_i\dot {\tilde z}^{\rho}
		\end{matrix}} \right], \ \ i=1,\dots,m,\\
	- \tilde G^*\left( {{\tilde z} ,z^*} \right){{\dot {\tilde z} }^{{\rho}}} + \dot z^* &=& \tilde f^*\left( {{\tilde z} ,z^*} \right).
	\end{array} \right. 
$$
  Finally, by Theorem \ref{Thm:ex and sys} and $\Sigma\mathop  \sim \limits^{sys}\tilde \Sigma$, we have that $\Xi\mathop  \sim \limits^{ex} \tilde\Xi$ and that $\tilde \Xi$ is in the \textbf{NWF} of (\ref{Eq:NF1}).
\end{proof}
\section{Conclusions}\label{section:4}
In this paper,   we first revise the geometric reduction method for the existence of nonlinear DAE solutions, and then we define the notions of internal and external equivalence, their differences are {discussed} by analyzing their relations with solutions.  We show that the internal regularity (existence and uniqueness of solutions) of a DAE is equivalent to {the fact} that the DAE is internally equivalent to an ODE (without free variables) on its maximal invariant submanifold. A procedure named explicitation with driving variables is proposed to connect nonlinear DAEs with nonlinear control systems. We show that the external equivalence for two DAEs {is} the same as the system equivalence for {their explicitation systems}.  Moreover, we show that $\Xi$ is {externally} equivalent to a semi-explicit DAE if and only if the distribution defined by $\ker E(x)$ is {of} constant rank and involutive. If so, the driving variables of  a control system $\Sigma\in\mathbf{Expl}(\Xi)$ can be fully reduced.  Finally, two nonlinear generalizations of the Weierstrass form \textbf{WF} are proposed based on  the  explicitation method and the notions as zero dynamics, relative degree and invariant distributions of nonlinear control theory.
\bigskip

\noindent\textbf{Acknowledgment:}  The first author is currently supported by Vidi-grant 639.032.733. 

\bibliographystyle{model1-num-names}
\bibliography{bibthesis}

\begin{thebibliography}{33}
\expandafter\ifx\csname natexlab\endcsname\relax\def\natexlab#1{#1}\fi
\providecommand{\url}[1]{\texttt{#1}}
\providecommand{\href}[2]{#2}
\providecommand{\path}[1]{#1}
\providecommand{\DOIprefix}{doi:}
\providecommand{\ArXivprefix}{arXiv:}
\providecommand{\URLprefix}{URL: }
\providecommand{\Pubmedprefix}{pmid:}
\providecommand{\doi}[1]{\href{http://dx.doi.org/#1}{\path{#1}}}
\providecommand{\Pubmed}[1]{\href{pmid:#1}{\path{#1}}}
\providecommand{\bibinfo}[2]{#2}
\ifx\xfnm\relax \def\xfnm[#1]{\unskip,\space#1}\fi
\bibitem[{Rabier and Rheinboldt(2002)}]{rabier2002theoretical}
\bibinfo{author}{P.~J. Rabier}, \bibinfo{author}{W.~C. Rheinboldt},
\newblock \bibinfo{title}{Theoretical and numerical analysis of
  differential-algebraic equations},
\newblock in: \bibinfo{editor}{P.~G. Ciarlet}, \bibinfo{editor}{J.~L. Lions}
  (Eds.), \bibinfo{booktitle}{Handbook of Numerical Analysis}, volume
  \bibinfo{volume}{VIII}, \bibinfo{publisher}{Elsevier Science},
  \bibinfo{address}{Amsterdam, The Netherlands}, \bibinfo{year}{2002}, pp.
  \bibinfo{pages}{183--537}.
\bibitem[{Riaza(2008)}]{riaza2008differential}
\bibinfo{author}{R.~Riaza}, \bibinfo{title}{Differential-Algebraic Systems.
  Analytical Aspects and Circuit Applications}, \bibinfo{publisher}{World
  Scientific Publishing}, \bibinfo{address}{Basel}, \bibinfo{year}{2008}.
\bibitem[{Kobayashi and Nomizu(1963)}]{kobayashi1963foundations}
\bibinfo{author}{S.~Kobayashi}, \bibinfo{author}{K.~Nomizu},
  \bibinfo{title}{Foundations of Differential Geometry, Vol.~1},
  \bibinfo{publisher}{Interscience}, \bibinfo{year}{1963}.
\bibitem[{Riaza(2013)}]{riaza2013daes}
\bibinfo{author}{R.~Riaza},
\newblock \bibinfo{title}{{DAE}s in {C}ircuit {M}odelling: {A} {S}urvey},
\newblock in: \bibinfo{editor}{A.~Ilchmann}, \bibinfo{editor}{T.~Reis} (Eds.),
  \bibinfo{booktitle}{Surveys in Differential-Algebraic Equations I},
  Differential-Algebraic Equations Forum, \bibinfo{publisher}{Springer-Verlag},
  \bibinfo{address}{Berlin-Heidelberg}, \bibinfo{year}{2013}, pp.
  \bibinfo{pages}{97--136}.
\bibitem[{Byrne and Ponzi(1988)}]{byrne1988differential}
\bibinfo{author}{G.~Byrne}, \bibinfo{author}{P.~Ponzi},
\newblock \bibinfo{title}{Differential-algebraic systems, their applications
  and solutions},
\newblock \bibinfo{journal}{Computers Chem. Engng} \bibinfo{volume}{12}
  (\bibinfo{year}{1988}) \bibinfo{pages}{377--382}.
\bibitem[{Pantelides et~al.(1988)Pantelides, Gritsis, Morison, and
  Sargen}]{pantelides1988mathematical}
\bibinfo{author}{C.~C. Pantelides}, \bibinfo{author}{D.~Gritsis},
  \bibinfo{author}{K.~R. Morison}, \bibinfo{author}{R.~W.~H. Sargen},
\newblock \bibinfo{title}{The mathematical modeling of transient systems using
  differential-algebraic equations},
\newblock \bibinfo{journal}{Computers Chem. Engng} \bibinfo{volume}{12}
  (\bibinfo{year}{1988}) \bibinfo{pages}{449--454}.
\bibitem[{Rabier and Rheinboldt(2000)}]{rabier2000nonholonomic}
\bibinfo{author}{P.~J. Rabier}, \bibinfo{author}{W.~C. Rheinboldt},
  \bibinfo{title}{{Nonholonomic Motion of Rigid Mechanical Systems from a DAE
  Viewpoint}}, volume~\bibinfo{volume}{68}, \bibinfo{publisher}{Society for
  Industrial and Applied Mathematics}, \bibinfo{year}{2000}.
\bibitem[{Betsch and Steinmann(2002)}]{betsch2002dae}
\bibinfo{author}{P.~Betsch}, \bibinfo{author}{P.~Steinmann},
\newblock \bibinfo{title}{A {DAE} approach to flexible multibody dynamics},
\newblock \bibinfo{journal}{Multibody System Dynamics} \bibinfo{volume}{8}
  (\bibinfo{year}{2002}) \bibinfo{pages}{365--389}.
\bibitem[{Blajer and Ko{\l}odziejczyk(2007)}]{blajer2007control}
\bibinfo{author}{W.~Blajer}, \bibinfo{author}{K.~Ko{\l}odziejczyk},
\newblock \bibinfo{title}{Control of underactuated mechanical systems with
  servo-constraints},
\newblock \bibinfo{journal}{Nonlinear Dynamics} \bibinfo{volume}{50}
  (\bibinfo{year}{2007}) \bibinfo{pages}{781--791}.
\bibitem[{Reich(1990)}]{reich1990geometrical}
\bibinfo{author}{S.~Reich},
\newblock \bibinfo{title}{On a geometrical interpretation of
  differential-algebraic equations},
\newblock \bibinfo{journal}{Circuits Systems Signal Process.}
  \bibinfo{volume}{9} (\bibinfo{year}{1990}) \bibinfo{pages}{367--382}.
\bibitem[{Reich(1991)}]{reich1991existence}
\bibinfo{author}{S.~Reich},
\newblock \bibinfo{title}{On an existence and uniqueness theory for nonlinear
  differential-algebraic equations},
\newblock \bibinfo{journal}{Circuits Systems Signal Process.}
  \bibinfo{volume}{10} (\bibinfo{year}{1991}) \bibinfo{pages}{343--359}.
\bibitem[{Rabier and Rheinboldt(1994)}]{rabier1994geometric}
\bibinfo{author}{P.~J. Rabier}, \bibinfo{author}{W.~C. Rheinboldt},
\newblock \bibinfo{title}{A geometric treatment of implicit
  differential-algebraic equations},
\newblock \bibinfo{journal}{J. Diff. Eqns.} \bibinfo{volume}{109}
  (\bibinfo{year}{1994}) \bibinfo{pages}{110--146}.
\bibitem[{Chen and Trenn(2020)}]{chenMTNS}
\bibinfo{author}{Y.~Chen}, \bibinfo{author}{S.~Trenn},
\newblock \bibinfo{title}{On geometric and differentiation index of nonlinear
  differential-algebraic equations},
\newblock in: \bibinfo{booktitle}{Proceedings of the MTNS 2020},
  \bibinfo{address}{Cambridge, UK}, \bibinfo{year}{2020}. \bibinfo{note}{In
  press}.
\bibitem[{Berger(2016)}]{berger2016controlled}
\bibinfo{author}{T.~Berger},
\newblock \bibinfo{title}{Controlled invariance for nonlinear
  differential-algebraic systems},
\newblock \bibinfo{journal}{Automatica} \bibinfo{volume}{64}
  (\bibinfo{year}{2016}) \bibinfo{pages}{226--233}.
\bibitem[{Berger(2017)}]{Berger2016zero}
\bibinfo{author}{T.~Berger},
\newblock \bibinfo{title}{The zero dynamics form for nonlinear
  differential-algebraic systems},
\newblock \bibinfo{journal}{{IEEE} Trans. Autom. Control} \bibinfo{volume}{62}
  (\bibinfo{year}{2017}) \bibinfo{pages}{4131--4137}.
\bibitem[{Chen and Trenn(2021)}]{chen2021ADHS}
\bibinfo{author}{Y.~Chen}, \bibinfo{author}{S.~Trenn}, \bibinfo{title}{A
  singular perturbed system approximation of nonlinear diffrential-algebraic
  equations}, \bibinfo{year}{2021}. \bibinfo{note}{Accepted by IFAC conference
  of ADHS2021, preprint available from https://arxiv.org/abs/2103.12146}.
\bibitem[{Chen and Respondek(2021)}]{chen2019a}
\bibinfo{author}{Y.~Chen}, \bibinfo{author}{W.~Respondek},
\newblock \bibinfo{title}{Geometric analysis of differential-algebraic
  equations via linear control theory},
\newblock \bibinfo{journal}{{SIAM} J. Control Optim.} \bibinfo{volume}{59}
  (\bibinfo{year}{2021}) \bibinfo{pages}{103--130}.
\bibitem[{Kronecker(1890)}]{Kron90}
\bibinfo{author}{L.~Kronecker},
\newblock \bibinfo{title}{Algebraische {R}eduction der {S}chaaren bilinearer
  {F}ormen},
\newblock \bibinfo{journal}{Sitzungsberichte der K{\"o}niglich Preu{\ss}ischen
  Akademie der Wissenschaften zu Berlin}  (\bibinfo{year}{1890})
  \bibinfo{pages}{1225--1237}.
\bibitem[{Gantmacher(1959)}]{Gant59b}
\bibinfo{author}{F.~R. Gantmacher}, \bibinfo{title}{The Theory of Matrices
  (Vol.~II)}, \bibinfo{publisher}{Chelsea}, \bibinfo{address}{New York},
  \bibinfo{year}{1959}.
\bibitem[{Weierstra{\ss}(1868)}]{Weie68}
\bibinfo{author}{K.~Weierstra{\ss}},
\newblock \bibinfo{title}{Zur {T}heorie der bilinearen und quadratischen
  {F}ormen},
\newblock \bibinfo{journal}{Berl.~Monatsb.}  (\bibinfo{year}{1868})
  \bibinfo{pages}{310--338}.
\bibitem[{Loiseau et~al.(1991)Loiseau, {\"O}z{\c{c}}aldiran, Malabre, and
  Karcanias}]{loiseau1991feedback}
\bibinfo{author}{J.~J. Loiseau}, \bibinfo{author}{K.~{\"O}z{\c{c}}aldiran},
  \bibinfo{author}{M.~Malabre}, \bibinfo{author}{N.~Karcanias},
\newblock \bibinfo{title}{Feedback canonical forms of singular systems},
\newblock \bibinfo{journal}{Kybernetika} \bibinfo{volume}{27}
  (\bibinfo{year}{1991}) \bibinfo{pages}{289--305}.
\bibitem[{Berger et~al.(2012)Berger, Ilchmann, and Trenn}]{BERGER20124052}
\bibinfo{author}{T.~Berger}, \bibinfo{author}{A.~Ilchmann},
  \bibinfo{author}{S.~Trenn},
\newblock \bibinfo{title}{The quasi-{W}eierstra{\ss} form for regular matrix
  pencils},
\newblock \bibinfo{journal}{Linear Algebra Appl.} \bibinfo{volume}{436}
  (\bibinfo{year}{2012}) \bibinfo{pages}{4052--4069}.
\bibitem[{Berger and Trenn(2012)}]{Berger2012}
\bibinfo{author}{T.~Berger}, \bibinfo{author}{S.~Trenn},
\newblock \bibinfo{title}{The quasi-{K}ronecker form for matrix pencils},
\newblock \bibinfo{journal}{{SIAM} J. Matrix Anal. \& Appl.}
  \bibinfo{volume}{33} (\bibinfo{year}{2012}) \bibinfo{pages}{336--368}.
\bibitem[{Rouchon et~al.(1992)Rouchon, Fliess, and
  L{\'e}vine}]{rouchon1992kronecker}
\bibinfo{author}{P.~Rouchon}, \bibinfo{author}{M.~Fliess},
  \bibinfo{author}{J.~L{\'e}vine},
\newblock \bibinfo{title}{{Kronecker}'s canonical forms for nonlinear implicit
  differential systems},
\newblock in: \bibinfo{booktitle}{System Structure and Control},
  \bibinfo{year}{Pergamon, 1992}, pp. \bibinfo{pages}{248--251}.
\bibitem[{Kumar and Daoutidis(1999)}]{KumaDaou99}
\bibinfo{author}{A.~Kumar}, \bibinfo{author}{P.~Daoutidis},
  \bibinfo{title}{Control of Nonlinear Differential Algebraic Equation Systems
  with Applications to Chemical Processes}, volume \bibinfo{volume}{397} of
  \textit{\bibinfo{series}{Chapman and Hall/CRC Research Notes in
  Mathematics}}, \bibinfo{publisher}{Chapman and Hall}, \bibinfo{address}{Boca
  Raton, FL}, \bibinfo{year}{1999}.
\bibitem[{Isidori(1995)}]{Isid95}
\bibinfo{author}{A.~Isidori}, \bibinfo{title}{Nonlinear Control Systems},
  Communications and Control Engineering Series, \bibinfo{edition}{3rd} ed.,
  \bibinfo{publisher}{Springer-Verlag}, \bibinfo{address}{Berlin},
  \bibinfo{year}{1995}.
\bibitem[{Nijmeijer and van~der Schaft(1990)}]{nijmeijer1990nonlinear}
\bibinfo{author}{H.~Nijmeijer}, \bibinfo{author}{A.~J. van~der Schaft},
  \bibinfo{title}{Nonlinear Dynamical Control Systems},
  \bibinfo{publisher}{Springer-Verlag}, \bibinfo{address}{Berlin-Heidelberg-New
  York}, \bibinfo{year}{1990}.
\bibitem[{Wong(1974)}]{Wong74}
\bibinfo{author}{K.-T. Wong},
\newblock \bibinfo{title}{The eigenvalue problem {$\lambda Tx + Sx $}},
\newblock \bibinfo{journal}{J. Diff. Eqns.} \bibinfo{volume}{16}
  (\bibinfo{year}{1974}) \bibinfo{pages}{270--280}.
\bibitem[{Berger and Reis(2015)}]{berger2015}
\bibinfo{author}{T.~Berger}, \bibinfo{author}{T.~Reis},
\newblock \bibinfo{title}{Regularization of linear time-invariant
  differential--algebraic systems},
\newblock \bibinfo{journal}{Syst. Control Lett.} \bibinfo{volume}{78}
  (\bibinfo{year}{2015}) \bibinfo{pages}{40--46}.
\bibitem[{Marino et~al.(1994)Marino, Respondek, and van~der
  Schaft}]{marino1994equivalence}
\bibinfo{author}{R.~Marino}, \bibinfo{author}{W.~Respondek},
  \bibinfo{author}{A.~van~der Schaft},
\newblock \bibinfo{title}{Equivalence of nonlinear systems to input-output
  prime forms},
\newblock \bibinfo{journal}{{SIAM} J. Control Optim.} \bibinfo{volume}{32}
  (\bibinfo{year}{1994}) \bibinfo{pages}{387--407}.
\bibitem[{Morse(1973)}]{morse1973structural}
\bibinfo{author}{A.~S. Morse},
\newblock \bibinfo{title}{Structural invariants of linear multivariable
  systems},
\newblock \bibinfo{journal}{{SIAM} J. Cont.} \bibinfo{volume}{11}
  (\bibinfo{year}{1973}) \bibinfo{pages}{446--465}.
\bibitem[{Chen and Respondek(2021)}]{chen2021from}
\bibinfo{author}{Y.~Chen}, \bibinfo{author}{W.~Respondek},
  \bibinfo{title}{{From Morse triangular form of ODE control systems to
  feedback canonical form of DAE control systems}}, \bibinfo{year}{2021}.
  \bibinfo{note}{Submitted for publication, available from
  \url{https://arxiv.org/abs/2103.14913}}.
\bibitem[{Chen(2019)}]{chen2019geometric}
\bibinfo{author}{Y.~Chen}, \bibinfo{title}{{Geometric Analysis of
  Differential-Algebraic Equations and Control Systems: Linear, Nonlinear and
  Linearizable}}, Ph.D. thesis, Normandie Universit{\'e}, INSA de Rouen,
  \bibinfo{year}{2019}.

\end{thebibliography}

\section{Appendix} \label{sec:Appendix}
\makeatletter
\renewcommand{\fnum@algorithm}{\fname@algorithm}
\makeatother

\begin{algorithm}[h]\caption{Geometric reduction algorithm for nonlinear DAEs}  
	\begin{algorithmic}[1]
			\begin{small} 
		\Require{Consider $\Xi_{l,n}=(E,F)$, fix $x_p\in X$ and let $U_{0}\subseteq X$ be  an open  connected  subset  containing $x_p$.   	Set  $z_0=x$, $ E_0(z_0)=E(x)$, $ F_0(z_0)=F(x)$,  $M^c_0=U_0$, $r_0=l$, $n_0=n$, \red{and $\Xi_0=(E_0,F_0)$. Below all sets $U_k$ are open in $X$ and $W_k$ are open in $M^c_{k-1}$.} 	
		\Stepk{}  	Suppose that we have defined  at  Step $k-1$: an open neighborhood $U_{k-1}\subseteq X$ of $x_p$, a smooth embedded connected submanifold $M^c_{k-1}$ of $U_{k-1}$ and a DAE $\Xi_{k-1}=(E_{k-1},F_{k-1})$ given by  smooth matrix-valued maps
		
		\begin{center}
			$
		E_{k-1} : M^c_{k-1}\!\rightarrow\!\mathbb{R}^{r_{k-1}\times n_{k-1} },~~  F_{k-1}:M^c_{k-1}\rightarrow\mathbb{R}^{r_{k-1}}, 
		$
		\end{center}
 whose arguments are denoted $z_{k-1}\in M^c_{k-1}$.} 
		\\ Rename the maps as $\tilde E_{k} =E_{k-1} $, $\tilde F_{k}=  {F}_{k-1} $ and define $\tilde \Xi_k:=(\tilde E_{k},\tilde F_{k})$.
	\Ensure{There exists an open neighborhood $U_{k} \subseteq U_{k-1}\subseteq  X$ of $x_p$ such that   $\rk  \tilde E_{k}(z_{k-1})=const.=r_k$,  $\forall z_{k-1}\in W_{k}=U_{k}\cap M^c_{k-1}$. }
	\\		Find a smooth map $Q_{k}:W_{k}\rightarrow GL(r_{k-1},\mathbb{R})$, such that $\tilde E_{k}^1$   of  	${Q_{k}}{\tilde E_{k}} = \left[ {\begin{smallmatrix}
			{\tilde E_{k}^1 }\\
			0\\
	\end{smallmatrix}} \right]$ is of full row rank and denote
	 	${Q_{k}}{\tilde F_{k}} =\left[  {\begin{smallmatrix}
			{\tilde F_{k}^1}\\
			{\tilde F_{k}^2}
	\end{smallmatrix}} \right] $,  where $\tilde E_{k}^1:W_{k}\!\rightarrow\!\mathbb{R}^{r_{k}\times n_{k-1}}$, $\tilde F_{k}^2:W_{k}\!\rightarrow\!\mathbb{R}^{r_{k-1}-r_{k}}$ (so all the matrices depend on $z_{k-1}$). \\
		Following (\ref{Eq:Mseq}), define $
		M_{k}=\left\{ {z_{k-1} \in W_{k}\,|\, {\tilde F_{k}^2(z_{k-1}) = 0} } \right\}.$       
		\Assum{ $x_p\in M_{k}$ and  $ {\rm rank\,}{\rm D}\tilde F_{k}^2(z_{k-1})=const.=n_{k-1}-n_{k}$  for  $z_{k-1}\in M_{k}\cap U_{k}$,  by taking a smaller $U_k$ (if necessary).}  \\
	 By Assumption 2, $M_k\cap U_k$ is a smooth embedded submanifold and by taking again a smaller $U_k$, we may assume that $M^c_k=M_k\cap U_k$ is connected and choose  new coordinates 	$(z_{k},\bar z_{k})=\psi_{k}(z_{k-1})$ on $W_{k}$, where $ \bar z_{k}=\bar \varphi_{k}(z_{k-1})=(\bar \varphi_{k}^1(z_{k-1}),..., \bar \varphi_k^{n_{k-1}-n_{k}}(z_{k-1}))$, with ${\rm d}\bar \varphi_k^1(z_{k-1}),..., {\rm d}\bar \varphi_k^{n_{k-1}-n_{k}}(z_{k-1})$ being all independent rows of ${\rm D}\tilde F_{k}^2(z_{k-1})$, and $z_{k}=\varphi_{k}(z_{k-1})=(\varphi_{k}^1(z_{k-1}),..., \varphi_k^{n_{k-1}}(z_{k-1}))$ are any complementary coordinates  such that $\psi_{k}=(\varphi_k,\bar\varphi_k)$ is a local diffeomorphism. \\
		Set ${{\hat E}_{k}}= {Q_{k}}{\tilde E_{k}}{\left( {\frac{{\partial \bar \varphi_k}}{{\partial z_{k-1}}}} \right)^{ - 1}}$,  ${\hat F}_{k} = {Q_{k}} \tilde F_{k}$. 
		By Definition \ref{Def:ex-equivalence}, $\tilde \Xi_{k}\mathop  \sim \limits^{ex} \hat \Xi_{k}=(\hat E_{k},\hat F_{k})$ via $Q_{k}$ and $\psi_k$,  where 	
\begin{align}\label{Eq:tildeXiuk}
			\begin{small}
		\hat \Xi_{k}:  	 \left[ {\begin{matrix}
			\hat E_{k}^1(z_{k},\bar z_{k})&\bar E_{k}^1(z_{k},\bar z_{k})\\
			0&0\\ 
			\end{matrix}} \right]\left[  {\begin{matrix}
			\dot z_{k}\\
			\dot {\bar z}_{k}
			\end{matrix}} \right]  =\left[  \begin{matrix}
		\hat  F_{k}^1(z_{k},\bar z_{k})\\ 
		\hat F_{k}^2(z_{k},\bar z_{k}) 
		\end{matrix} \right] 
		\end{small} 
		\end{align}
	with $\hat E^1_{k}:W_{k}\to \mathbb R^{r_{k}\times n_{k}}$, $\hat F^1_k\circ\psi_k=\tilde F^1_k$, $\hat F^2_k\circ\psi_k=\tilde F^2_k$ and   $[   \hat  E_{k}^1\circ\psi_k \ \ \bar E_{k}^1 \circ\psi_k]=\tilde E^1_k{\left( {\frac{{\partial \psi_k}}{{\partial z_{k-1}}}} \right)^{ - 1}}$.\\
	Set $\bar z_{k}=0$   to define the following reduced and restricted DAE on  $
	M^c_{k}= \left\{ z_{k-1}\in W_{k} \,|\, \bar z_{k}=0 \right\}$  \red{by}
	\begin{align*}
	\Xi_k: E_k(z_k)\dot z_k=F_k(z_k),
	\end{align*}
		where $E_k(z_k)=\hat E_{k}^1(z_{k},0)$, $F_k(z_k)=\hat F_{k}^1(z_{k},0)$ are matrix-valued~maps and
		$
		E_{k} : M^c_{k}\!\rightarrow\!\mathbb{R}^{r_{k}\times n_{k} },~~  F_{k}:M^c_{k}\rightarrow\mathbb{R}^{r_{k}}.$
		\Rep{  Step $k$ for $k=1,2,3,\ldots,$ \textbf{until} $n_{k+1}=n_{k}$, set $k^*=k$.}
		\Result{ Set   $n^*=n_{k^*}=n_{k^*+1}$, $r^*=r_{k^*+1}$, $M^*= M^c_{k^*+1}$, $U^*=U_{k^*+1}$, $z^*=z_{k^*+1}=z_{k^*}$ and $\Xi^*=(E^*,F^*)$ with $E^*=  E_{k^*+1}$, $F^*= F_{k^*+1}$. } 
	\end{small}
	\end{algorithmic}
\end{algorithm}
\begin{rem}\label{rem:Alg1}
	(i) The geometric reduction algorithm is a {constructive application} for Proposition~\ref{Pro:invariant manifold}, but with more assumptions. The \textbf{Assumption 1}   is made to produce the full row rank matrices $\tilde E^1_k$   and  the zero-level set $M_{k}=\left\{ {z_{k-1} \in W_{k}\,|\, {\tilde F_{k}^2(z_{k-1}) = 0} } \right\}$.  The \textbf{Assumption 2} assure that $M_k\cap U_k$ is a smooth embedded submanifold and makes it possible to  use the  components of $\tilde F_{k}^2$ with linearly independent differentials as  a part  of  new  local    coordinates.
	
	(ii) The integers $r_k$, $n_k$ of the geometric reduction algorithm, satisfy,  for each $k\ge 1$,
	$$
	\left\lbrace \begin{array}{ll}
	l= r_0\ge r_1\ge...\ge r_k\ge...\ge 0,& \ \ \ n= n_0\ge n_1\ge...\ge n_k\ge...\ge 0,\\ n_{k-1}\ge r_k,&
  r_{k-1}-r_k\ge n_{k-1}-n_k.
	\end{array}	\right. 	
	$$
	
	 
\end{rem}

\end{document}